\newtheorem{thm}{Theorem}[section]
\newtheorem{prop}[thm]{Proposition}
\newtheorem{lem}[thm]{Lemma}
\newtheorem{cor}[thm]{Corollary}
\newtheorem{definition}[thm]{Definition}
\newenvironment{example}{\medskip\refstepcounter{thm}
\noindent{\bf Example \thesection.\arabic{thm}\ }}{\medskip}
\def\eq#1{{\rm(\ref{#1})}}
\newenvironment{proof}[1][,]{\medskip\ifcat,#1
\noindent{{\it Proof}:\ }\else\noindent{\it Proof of #1.\ }\fi}
{\hfill$\square$\medskip}
\newenvironment{remark}[1][Remark]{\begin{trivlist}
\item[\hskip \labelsep {\bfseries #1}]}{\end{trivlist}}
\DeclareMathOperator\vol{vol}
\DeclareMathOperator\Div{div}
\DeclareMathOperator\Diff{Diff}
\DeclareMathOperator\Ric{Ric}
\DeclareMathOperator\Vol{Vol}
\DeclareMathOperator\Ham{Ham}
\DeclareMathOperator\expG{exp}
\DeclareMathOperator\TRL{\mathcal{T}}
\DeclareMathOperator\tnabla{\widetilde{\nabla}}
\def\d{{\rm d}}
\def\w{\wedge}
\def\C{\mathbb{C}}
\def\CP{\mathbb{CP}}
\def\Sph{\mathbb{S}}
\def\S{\mathcal{S}}
\def\R{\mathbb{R}}
\def\Z{\mathbb{Z}}
\begin{document}

\title{Complexified diffeomorphism groups, totally real submanifolds and K\"ahler-Einstein geometry}

\author{Jason D. Lotay and Tommaso Pacini}

%\date

\maketitle

\begin{abstract}
Let $(M,J)$ be an almost complex manifold. We show that the infinite-dimensional space $\TRL$ of totally real submanifolds in $M$ carries a natural connection. 
This induces a canonical notion of geodesics in $\TRL$ and a corresponding definition of when a functional $f:\TRL\rightarrow \R$ is convex. 

Geodesics  in $\TRL$ can be expressed in terms of families of $J$-holomorphic curves in $M$; we prove a uniqueness result and study their existence. When $M$ is K\"ahler we define a canonical functional on $\TRL$; it is convex if $M$ has non-positive Ricci curvature. 

Our construction is formally analogous to the notion of geodesics and the Mabuchi functional on the space of K\"ahler potentials, as studied by Donaldson, Fujiki and Semmes. 
Motivated by this analogy, we discuss possible applications of our theory to the study of minimal Lagrangians in negative K\"ahler--Einstein manifolds.
\end{abstract}

\section{Introduction}
Let $(M,J)$ be a $2n$-dimensional manifold endowed with an almost complex structure. Given $p\in M$, we say an $n$-plane $\pi$ in $T_pM$ is \textit{totally real} if $J(\pi)\cap\pi=\{0\}$, 
\textit{i.e.}~if $T_pM$ is the complexification of $\pi$. An $n$-dimensional submanifold $L$ is \textit{totally real} if $T_pL$ is totally real in $T_pM$ for all $p\in L$. This gives a decomposition
$$T_pM=T_pL\oplus J(T_pL).$$ 
Although totally real submanifolds are a natural object in complex geometry,
 they cannot be studied using purely complex analytic tools. They are, in a
  sense, the opposite of complex submanifolds; in fact, they are  ``maximally non-complex'', where maximal also refers
to their dimension. Furthermore, the defining condition is an open one so their ``moduli space'' $\TRL$ is infinite-dimensional. 

It might seem reasonable to conclude that this class of submanifolds is too weak to carry interesting geometry. In this paper we will prove the contrary by initiating a study of the global geometric features of the space $\TRL$. Further results in this direction appear in the companion paper \cite{LP}; other applications appear in \cite{LPpersist}.

\paragraph{Geodesics on {\boldmath $\TRL$}.} Our first main result, described in Section \ref{ss:connection}, is that $\TRL$ admits a natural connection, inducing a notion of geodesics. In simpler language, we
 discover that there exists a notion of canonical 1-parameter deformations of a totally real submanifold $L$, in any given direction. This is rather striking: there is
 no analogue of this fact known in other spaces of submanifolds. In some sense this observation is the ``global version'' of the definition of totally real
 submanifolds, which says the ``normal'' space $T_pM/T_pL$ and tangent space $T_pL$ are canonically isomorphic via $J$. 
In other words, the extrinsic and intrinsic geometry of $L$ coincide; geodesics are, in a sense, the extrinsic analogue of the integral curves of tangent vector fields.

\paragraph{A convex functional.}  The geodesics induce a notion of convex functionals $f:\TRL\rightarrow\R$: specifically, those which are convex in one variable when restricted to each geodesic. A second striking fact is provided by the following example. Consider  $M=\C$, so that $\TRL$ is the space of curves: in this situation we prove that the standard length functional is convex in our sense. Interestingly, this turns out to be a reformulation of a classical result due to Riesz concerning certain convexity properties of integrals of the form $r\mapsto\int |u(re^{i\theta})|\,d\theta$, where $u$ is a subharmonic function on an annulus.

The length functional uses the metric on $\C$, so in higher dimensions it is natural to focus on K\"ahler, more generally almost Hermitian, manifolds $M$ and look for an analogous functional on $\TRL$. A first guess might be the standard Riemannian volume  functional but, in our context, this  is rather unnatural because it does not encode the totally real condition. In the literature \cite{Bor} one finds a second ``volume functional'', tailored specifically to totally real submanifolds. 

To understand this alternative functional, the key observation is that there exists a second, equivalent, definition of the totally real condition: $L$ is totally real if and only if the pullback operation for forms defines an isomorphism $K_{M|L}\simeq \Lambda^n(L;\C)$. One can view this as another manifestation of the ``extrinsic=intrinsic'' property of totally real submanifolds. When $L$ is oriented it turns out that ${K_M}_{|L}$ admits a canonical section. Integrating this (real) $n$-form on $L$ defines the ``$J$-volume functional'', which agrees with the length functional in dimension $1$ but is in general different to the Riemannian volume functional.

Our second main result, stated in Section \ref{ss:convexity}, is that, in the appropriate setting, this functional is convex in our sense. It is perhaps worth emphasizing that the notion of geodesics lies entirely within the realm of complex analysis: a priori, it has no relationship to K\"ahler geometry. Our convexity result thus reveals a new form of compatibility between complex and metric data.

\paragraph{Applications to minimal Lagrangian submanifolds.}This brings us to 
our study of the relationship between the $J$-volume and the  Riemannian volume.
The outcome is especially interesting when $M$ is a K\"ahler--Einstein (KE) manifold with negative scalar curvature. 

Recall that an $n$-dimensional submanifold $L$ in $M$ is Lagrangian if the ambient K\"ahler form  vanishes when restricted to $L$. Lagrangian submanifolds are a key topic in symplectic geometry. In the K\"ahler case it is particularly fruitful to study interactions between symplectic and Riemannian properties of $L$. For example, it is well-known that (i) in KE manifolds the mean curvature flow preserves the Lagrangian condition and (ii) in negative KE manifolds the minimal Lagrangians are strictly stable for the standard Riemannian volume.  

Fact (i) is the starting point for \cite{LP}. Our goal here is to further investigate fact (ii). 
Specifically, when $M$ is negative KE we show the following.
\begin{itemize}
 \item The $J$-volume provides a lower bound for the standard Riemannian volume. The two functionals coincide on Lagrangian submanifolds.\vspace{-4pt}
 \item The critical points of the $J$-volume are exactly the minimal Lagrangian submanifolds. It thus ``weeds out'' the additional critical points (non-Lagrangian minimal submanifolds) of the standard Riemannian volume.\vspace{-4pt}
 \item The $J$-volume is strictly convex with respect to our geodesics. For a minimal Lagrangian this is the global counterpart of the aforementioned infinitesimal stability property. 
\end{itemize}
It is thus clear that the $J$-volume provides good control over minimal Lagrangians. No such convexity holds for the Riemannian volume functional.

\paragraph{A moment map.}The above results fit into a larger picture. Indeed, the geometric features of $\TRL$ resemble those of two other  well-known infinite-dimensional spaces which appear in K\"ahler geometry: the integrable $(0,1)$-connections on a Hermitian vector bundle $E$, \textit{i.e.}~the holomorphic structures on $E$, and the K\"ahler potentials in a given K\"ahler class.  In both cases we have the following.
\begin{itemize}
 \item A canonical connection and notion of geodesics, related to an infinite-dimensional group action and its formal ``complexification''.\vspace{-4pt}
 \item A convex functional.\vspace{-4pt}
 \item A moment map encoding the group action, whose zero set coincides with the critical point set of the functional.
\end{itemize}
Following this lead, in Section \ref{s:grandconclusion} we show the geometry of $\TRL$ can be rephrased in terms of the formal complexification of the group
of (orientation-preserving) diffeomorphisms of $L$ and of a moment map induced by the $J$-volume functional. In particular, in the negative KE context it follows that minimal Lagrangians can be re-interpreted as the zero set of a moment map.

\paragraph{Open problems.} Our results naturally lead to  questions about minimal Lagrangians and their relationship with the geometry of negative KE manifolds.

In the analogous problem for K\"ahler potentials, the moment map serves to relate the existence of critical points of the functional to algebraic stability properties of the manifold, whilst the uniqueness of these points is related to the convexity of the functional. This formalism thus provides a useful understanding of the geometry of Fano manifolds, and was indeed one of the ingredients of the recently accomplished existence theory for positive KE metrics.

By contrast, the existence of KE-flat (Calabi--Yau) metrics was solved by Yau in the 1970s. Currently, the main questions here are related to calibrated geometry, mirror symmetry and its applications to String Theory in Physics. 

Given that the existence of negative KE metrics was  also solved in the 1970s, by Aubin and Yau, one might wonder what are the most interesting open questions in this context. Our results provide evidence that minimal Lagrangians are closely related to deep aspects of this geometry. They also show that the $J$-volume is a useful tool with which to probe this relationship.

On a technical level, a key feature of the space of K\"ahler potentials was its amenability to analytic methods. This led (across 20 years) to a complete existence theory for geodesics and to the corresponding extension of convexity results. The main analytic question we set up in this paper is whether an analogous theory is possible for geodesics in $\mathcal{T}$. In Section \ref{s:geodesic_eq} we provide a reformulation of the geodesic equation in terms of families of $J$-holomorphic curves intersecting the initial submanifold $L$. In the holomorphic setting this helps elucidate key features of the equation, by allowing us to use standard techniques from one complex variable to build examples and counterexamples to the existence of solutions. It also provides a fairly complete understanding of the uniqueness problem for geodesics. It is clear however that the final answer to these questions will require substantial effort, on a different technical scale.
More generally, it seems worthwhile investigating the properties of geodesics in relation to other classical problems in complex analysis. After this work was complete it was pointed out to us by L\'aszl\'o Lempert that the notion of geodesics, in the 1-dimensional case, had already been investigated in unpublished work by Birgen \cite{Birgen} in relation to Levi-flat hypersurfaces and polynomial hulls. Work in progress by Maccheroni \cite{roberta} shows that the notion of geodesics also finds applications to the study of complex-analytic properties of minimal Lagrangians.

A second significant problem is the existence and uniqueness of minimal Lagrangians in negative KE manifolds. As for K\"ahler potentials, existence may be related to a stability-type condition on the given data while our convexity result provides some information on global uniqueness properties, cf. Section \ref{s:grandconclusion}. Other aspects of the uniqueness question are discussed in  \cite{Joy} and in \cite{LPpersist}.

\ 

\textit{Thanks to} Filippo Bracci, Robert Bryant, Simon Donaldson, Jonny Evans, Pavel Gumenyuk, Dominic Joyce, Claude LeBrun, L\'aszl\'o Lempert, Fulvio Ricci and Chris Wendl for useful conversations and invaluable insights.

\section{The space of totally real submanifolds}\label{s:totally_real}
To start, let us make three initial choices:
\begin{itemize}
 \item a $2n$-dimensional manifold $(M,J)$ with an almost complex structure;\vspace{-4pt}
 \item an oriented $n$-dimensional manifold $L$;\vspace{-4pt}
 \item a totally real immersion $\iota:L\rightarrow M$.\footnote{This choice serves only to determine the homotopy class of immersions we will study.}
\end{itemize}

It will be important to maintain the distinction between immersions of $L$ and their corresponding images, \textit{i.e.}~``submanifolds''. 
In general, a \textit{submanifold} is an equivalence class of immersions, up to reparametrization via a diffeomorphism of $L$.  
Since the orientation of $L$ will play a role, we are interested in a slightly more refined notion: 
an \textit{oriented submanifold} is an equivalence class of immersions, up to reparametrization by orientation-preserving diffeomorphisms. 
 
The totally real condition is preserved under reparametrization, so it is well-defined on the space of (oriented) submanifolds. 
 We now define our two main spaces of interest.
 
 \begin{itemize}
  \item Let $\mathcal{P}$ be the space of totally real immersions of $L$ into $M$ which are homotopic, through totally real immersions, to the given $\iota$.\vspace{-4pt}
  \item Let $\mathcal{T}$ be the space of oriented totally real submanifolds  obtained as the quotient of $\mathcal{P}$ by the group\footnote{To simplify notation we omit any reference to the orientation.} $\mbox{Diff}(L)$ of orientation-preserving diffeomorphisms of $L$.
 \end{itemize}
 
We shall view $\pi:\mathcal{P}\rightarrow \TRL$, where $\pi$ is the natural projection, as a principal fibre bundle with respect to the obvious right group action of $\mbox{Diff}(L)$. 
The totally 
real condition is open in the Grassmannian of tangent $n$-planes in $M$, so it is a ``soft'' condition: 
in particular, $\mathcal{P}$ is an open subset of the space of all immersions. It thus has a natural Fr\'echet structure, making it an infinite-dimensional manifold. 
Given any $\iota\in\mathcal{P}$, we can identify $T_\iota\mathcal{P}$ with the space of all sections of (the pull-back of) the bundle $TM$ over $L$.

Moreover, $\TRL$ is (at least formally) also an infinite-dimensional manifold. Given $L\in \TRL$, its tangent space $T_L\TRL$ can be obtained via the infinitesimal analogue of the operation which quotients immersions by reparametrization. Specifically, $T_L\TRL$ can be identified with sections of the bundle $TM/TL\simeq J(TL)\simeq TL$; we conclude that $T_L\TRL\simeq \Lambda^0(TL)$. The key point is that the totally real condition provides a canonical subspace in $TM$  
transverse to $TL$ \emph{and} a canonical isomorphism of this space with $TL$; \textit{i.e.}~the (extrinsic) ``normal'' bundle (defined via quotients) is canonically isomorphic to the (intrinsic) tangent bundle.

\begin{remark} The action of $\mbox{Diff}(L)$ might not be free; it is guaranteed to be free only for embeddings. We will not worry about this issue, 
just as we will not be concerned about precise definitions of infinite-dimensional manifolds, Lie groups and bundles. Everything concerning such matters is taken as   purely formal, 
but it provides vital insight into the geometry of $\TRL$. We refer to \cite{infdim} for one approach to infinite-dimensional geometry and analysis which could be applied here.
\end{remark}

\begin{remark} Some orientable manifolds, \textit{e.g.}~$n$-spheres $
\mathbb{S}^n$, admit an orientation-reversing diffeomorphism $\phi$. In this case, reparametrization by $\phi$ defines a natural $\Z_2$-action on the space of immersions; two initial choices of totally real immersion related this way define  different (non-homotopic) spaces $\mathcal{P}$, thus $\TRL$. Other orientable manifolds do not admit such diffeomorphisms: \textit{e.g.}~$\CP^2$. In this case there is no distinction between submanifolds and oriented submanifolds.
 \end{remark}

\subsection{A canonical connection and geodesics}\label{ss:connection}

Differentiating the action of $\mbox{Diff}(L)$ at $\iota\in\mathcal{P}$ we obtain a subspace $V_\iota$ of the tangent bundle $T_\iota\mathcal{P}$, canonically isomorphic to the Lie algebra $\Lambda^0(TL)$ of vector fields on $L$. The space $V_\iota$ is the kernel of $\pi_*[\iota]:T_\iota\mathcal{P}\rightarrow T_{\pi(\iota)}\TRL$ and is given by
$$V_{\iota}=\{\iota_*X:X\in\Lambda^0(TL)\}.$$ 

Consider 
$$H_\iota:=J(V_\iota)=\{J\iota_*X:X\in \Lambda^0(TL)\}.$$
This space gives a complement to $V_\iota$, in the sense that there is a decomposition 
$$T_\iota\mathcal{P}=V_\iota\oplus H_\iota.$$
Varying $\iota$ in $\mathcal{P}$ we obtain a distribution $H$ in $T\mathcal{P}$. 

Let $\varphi\in\Diff(L)$ and let $\iota\in\mathcal{P}$.  Let $R_{\varphi}$ denote the right action of $\varphi$ on $\mathcal{P}$, \textit{i.e.}~$R_{\varphi}\iota=\iota\circ\varphi$.    We now show that the distribution $H$ is right-invariant.

\begin{lem}\label{l:H_defines_connection}
Let $\varphi\in\Diff(L)$ and $\iota\in\mathcal{P}$.  Then
$(R_{\varphi})_*H_{\iota}=H_{R_{\varphi}\iota}$.
\end{lem}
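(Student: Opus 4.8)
The plan is to show that the derivative $(R_\varphi)_*$ acts on tangent vectors to $\mathcal{P}$ by an extremely simple rule — precomposition with $\varphi$ on the source $L$ — and that this operation commutes with the fibrewise action of $J$; since $H_\iota=J(V_\iota)$ and the vertical distribution $V$ is manifestly right-invariant, this will immediately give $(R_\varphi)_*H_\iota=J(V_{R_\varphi\iota})=H_{R_\varphi\iota}$. First I would unwind the derivative of $R_\varphi$. Using the identification $T_\iota\mathcal{P}\simeq\Lambda^0(\iota^*TM)$, a tangent vector $v$ is represented by a variation $\iota_t$ with $\iota_0=\iota$; since $R_\varphi(\iota_t)=\iota_t\circ\varphi$, differentiating at $t=0$ shows that, under the identifications $T_\iota\mathcal{P}\simeq\Lambda^0(\iota^*TM)$ and $T_{R_\varphi\iota}\mathcal{P}\simeq\Lambda^0((\iota\circ\varphi)^*TM)=\Lambda^0(\varphi^*\iota^*TM)$, the map $(R_\varphi)_*$ is simply $v\mapsto v\circ\varphi$.

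With this in hand I would record two consequences. First, the vertical distribution is right-invariant: for $X\in\Lambda^0(TL)$ one computes $(R_\varphi)_*(\iota_*X)=(\iota_*X)\circ\varphi=(\iota\circ\varphi)_*(\varphi^*X)$, where $\varphi^*X:=d\varphi^{-1}\circ X\circ\varphi$ is the pulled-back vector field; since $X\mapsto\varphi^*X$ is a linear automorphism of $\Lambda^0(TL)$, this yields $(R_\varphi)_*V_\iota=V_{R_\varphi\iota}$. Second, the operation $v\mapsto v\circ\varphi$ commutes with the action of the almost complex structure: $J$ acts on $\Lambda^0(\iota^*TM)$ via the pulled-back endomorphism $\iota^*J$, and $(\iota^*J\cdot v)\circ\varphi=((\iota\circ\varphi)^*J)\cdot(v\circ\varphi)$ because, evaluated at $p\in L$, both sides equal $J_{\iota(\varphi(p))}\big(v(\varphi(p))\big)$.

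Combining these, $(R_\varphi)_*H_\iota=(R_\varphi)_*J(V_\iota)=J\big((R_\varphi)_*V_\iota\big)=J(V_{R_\varphi\iota})=H_{R_\varphi\iota}$. I do not expect a serious obstacle here; the work is entirely bookkeeping, and the one point to be careful about is keeping straight the two distinct roles of $\varphi$ and $J$ — reparametrization acts only on the source $L$, whereas $J$ acts pointwise on the target $TM$, so they trivially commute — together with the observation that when one rewrites $(R_\varphi)_*(J\iota_*X)$ in the required form $J(\iota\circ\varphi)_*Y$ the vector field itself must be reparametrized, $Y=\varphi^*X$. Finally, equality of the distributions (rather than mere inclusion) follows either because $X\mapsto\varphi^*X$ is invertible, or simply because $R_\varphi$ is a diffeomorphism of $\mathcal{P}$ with inverse $R_{\varphi^{-1}}$, so $(R_\varphi)_*$ is a fibrewise linear isomorphism.
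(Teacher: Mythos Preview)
Your proof is correct and follows essentially the same approach as the paper: both show that $(R_\varphi)_*$ acts by precomposition with $\varphi$, that this commutes with $J$ (since $J$ acts on the target), and that therefore $(R_\varphi)_*(J\iota_*X)=J(\iota\circ\varphi)_*(\varphi^*X)\in H_{R_\varphi\iota}$. The paper carries out this computation pointwise in a single chain of equalities, whereas you have organized the same steps more conceptually by separating the invariance of $V$ from the commutation with $J$, but the content is identical.
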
 

\begin{proof}
Let $J\iota_*X\in H_{\iota}$.  Then $J\iota_*X\in T_{\iota}\mathcal{P}$, so by definition
 there exists a curve $\iota_t$ in $\mathcal{P}$ with $\iota_0=\iota$ and $\frac{\d\iota_t}{\d t}|_{t=0}=J\iota_*X$.  Thus we may calculate for $p\in L$:
\begin{align*}
(R_{\varphi})_*J\iota_*|_pX|_p&=\frac{\d}{\d t}(R_\varphi\circ\iota_t)|_{t=0,p}=\frac{\d}{\d t}(\iota_t\circ\varphi)|_{t=0,p}=\frac{\d\iota_t}{\d t}|_{t=0,\varphi(p)}\\
&=J\iota_*|_{\varphi(p)}X|_{\varphi(p)}=J\iota_*|_{\varphi(p)}\circ\varphi_*|_p\circ\varphi_*^{-1}|_{\varphi(p)}X|_{\varphi(p)}\\
&=J(\iota_*\circ\varphi_*)|_p(\varphi_*^{-1}X)|_{p}.
\end{align*} 
Hence $(R_{\varphi})_*J\iota_*X=J(R_\varphi\iota)_*(\varphi_*^{-1}X)\in H_{R_{\varphi}\iota}$.
\end{proof}

\noindent By Lemma \ref{l:H_defines_connection} and the general theory of principal fibre bundles, $H$ defines a connection on the principal fibre bundle $\mathcal{P}$.

Recall  from the general theory that any representation $\rho$ of $\mbox{Diff}(L)$ on a vector space $E$ defines an associated vector bundle $\mathcal{P}\times_\rho E$ over $\TRL$; each fibre of this bundle is isomorphic to $E$. Such a bundle has an induced connection. Parallel sections of this bundle can be described as follows.
Choose a curve of submanifolds $L_t$ in $\TRL$. Choose a horizontal lift $\iota_t$, \textit{i.e.}~a curve in $\mathcal{P}$ satisfying $\pi(\iota_t)=L_t$ and $\frac{\d}{\d t}\iota_t\in H_{\iota_t}$. Choose any ($t$-independent) vector $e\in E$. Then the section $[(\iota_t,e)]$ of $\mathcal{P}\times_\rho E$, 
defined along $L_t$, is parallel. We can obtain all such parallel sections simply by varying $e$.

In particular, using the adjoint representation of $\mbox{Diff}(L)$ on its Lie algebra gives the vector bundle $\mathcal{P}\times_{\text{ad}}\Lambda^0(TL)$. 
It is of fundamental importance to us that this bundle is canonically isomorphic to the tangent bundle of $\TRL$, via 
\begin{equation}\label{iso.tgt.bundle}
 \mathcal{P}\times_{\text{ad}}\Lambda^0(TL)\simeq T(\TRL),\ \ [\iota,X]\mapsto \pi_*[\iota](J\iota_*X).
\end{equation}
\begin{remark}
When $M$ is complex (so $J$ is integrable), we revisit \eqref{iso.tgt.bundle} in 
Sections \ref{ss:cpx_lie} and \ref{ss:inf_complexification} from another 
viewpoint, as a consequence of Proposition \ref{p:homogeneous}. 
\end{remark}

The isomorphism \eqref{iso.tgt.bundle} implies that the connection given by $H$ on $\mathcal{P}$ 
induces a connection on $T(\TRL)$. We can then describe parallel vector fields on $\TRL$ as above. Finally, recall that a curve $L_t$ is a geodesic if its tangent vector field 
$\frac{\d}{\d t}(L_t)$ is parallel. We thus obtain the following characterization of geodesics in $\TRL$.

\begin{lem}\label{geod.lem} A curve $L_t$ in $\TRL$ is a geodesic if and only if there exists a curve of immersions $\iota_t$ and a fixed vector field $X$ in $\Lambda^0(TL)$ such that $\pi(\iota_t)=L_t$ and
\begin{equation}\label{geod.eq}
\frac{\d}{\d t}\iota_t=J\iota_{t*}(X).
\end{equation}
This implies that $[\iota_{t*}X,J\iota_{t*}X]=0$, for all $t$ for which $L_t$ is defined.
\end{lem}
\begin{proof}
 The form of \eqref{geod.eq} proves $\iota_t$ is horizontal, and $X\in\Lambda^0(TL)$ plays the role of $e\in E$ in the general theory.
 Assume $L_t$ is a geodesic defined for $t\in (-\epsilon,\epsilon)$.
Let $x(s)$ be an integral curve of $X$ on $L$, defined for some $s\in (a,b)$. Then 
$$f(s,t):(a,b)\times (-\epsilon,\epsilon)\rightarrow M,\ \ f(s,t)=\iota_t(x(s))$$  
is an immersed surface in $M$ and $\iota_{t*}X$, $J\iota_{t*}X$ represent its coordinate vector fields in the $s$ and $t$ directions, respectively. As such, they commute.
\end{proof}

 \begin{remark}
 The existence of a canonical connection on the space of totally reals  appears to be rather surprising. 
 One might wonder why this is not true for the space $\mathcal{S}$ of all submanifolds.  
 Although one can show there is a canonical right-invariant horizontal distribution on the space of all immersions $\mathcal{I}$, 
 defined by sections of the normal bundle, 
 one seems unable to view $T(\mathcal{S})$ as a vector bundle associated to $\mathcal{I}$, so it does not receive an induced connection.  
 In other words, the group action on $\mathcal{I}$ encodes only intrinsic information, 
 and in general one cannot encode the extrinsic geometry of the normal bundle intrinsically.
 \end{remark}

 \subsection{Convexity}
 
 Given geodesics, one  has a natural definition of convex functionals on $\TRL$.

\begin{definition}\label{convex.dfn}
A functional $F:\TRL\rightarrow\R$ is convex if and only if it restricts to a convex function in one variable along any geodesic in $\TRL$.
\end{definition}

\noindent In the absence of existence results for geodesics, this notion could be vacuous. However, in the presence of geodesics, convex functionals provide 
powerful tools for analysing the geometry of $\TRL$. We thus now turn to the existence problem.

 \section{The geodesic equation}\label{s:geodesic_eq}
 
Once one has a notion of geodesics on a manifold $\mathcal{M}$, there are two key existence issues which arise: (i) the Cauchy problem, \textit{i.e.}~the short-time existence of geodesics given an initial point and direction, and (ii) the boundary value problem, \textit{i.e.}~the existence of geodesics between two points in $\mathcal{M}$. 
 
 When $\mathcal{M}$ is finite-dimensional, or infinite-dimensional and Banach, the first problem is purely local and can be solved via the standard existence theory for ordinary differential equations. The second problem concerns the global properties of $\mathcal{M}$ and relates to the definition of geodesic completeness.
 
 In our case the manifold $\TRL$ is infinite-dimensional but only Fr\'echet, so existence and uniqueness results for geodesics are non-trivial. The goal of this section is to rephrase our notion of geodesics in terms of families of $J$-holomorphic curves in $(M,J)$. This has several advantages.
 \begin{itemize}
  \item It offers a geometrically appealing reformulation of the geodesic equation.\vspace{-4pt}
  \item It clarifies the nature of the geodesic equation, indicating for example that it is not elliptic; however, it can be written as a family of elliptic equations.\vspace{-4pt}
  \item It opens the door to standard tools in the theory of one complex variable.
 \end{itemize}
This viewpoint will lead us, at least when $M$ is complex, to a complete solution of the uniqueness question. It does not give a complete answer to the existence problem, but it does yield useful insight by providing both examples and counterexamples and by suggesting a slight weakening of the notion of solution.
 
 \subsection{A reformulation of the geodesic equation}\label{ss:reformulation}
 We distinguish three cases: the Cauchy problem, geodesic rays and the boundary value problem.
\paragraph{The Cauchy problem.}Assume we have an initial  $L_0\in\TRL$ and  initial direction in $T_{L_0}\TRL$, which may be identified with a smooth vector field $X$ on the abstract manifold $L$. Ideally, the initial value problem for the geodesic equation \eqref{geod.eq} can then be solved as follows.

\begin{enumerate}
 \item Choose an initial parametrization $\iota_0$ of the submanifold $L_0$.\vspace{-4pt}
 \item Consider the flow defined by $X$ on $L$; choose an integral curve $x=x(s)$ of $X$, where $s\in I:=(a,b)$.\vspace{-4pt}
 \item Seek a $J$-holomorphic curve $\iota(s,t):I\times (-\epsilon, \epsilon)\rightarrow (M,J)$ such that $\iota(s,0)=(\iota_0\circ x)(s)$. 
Here, $I\times (-\epsilon, \epsilon)$ has its standard complex structure. \vspace{-4pt}
 \item Varying the integral curve $x$ gives a family of $J$-holomorphic curves $\iota$. Since each point of $L$ belongs to some integral curve, fixing the time parameter $t$ defines a map $\iota_t:L\rightarrow M$ which coincides with $\iota_0$ for $t=0$.
\end{enumerate}
If the $J$-holomorphic curves depend smoothly on the integral curves, $\iota_t$ will be smooth. Since immersions form an open set in the space of maps, $\iota_t$ will be an immersion for small $t$. Finally, the $\iota_t$ solve \eqref{geod.eq} by construction.

Though appealing, this procedure entails some difficulties. In particular, we observe the following.

\begin{itemize}
\item Fix $X$. To obtain a map defined on $L$ for each $t\in (-\epsilon, \epsilon)$ we need to be able to choose $\epsilon$ independent of the integral curve.\vspace{-4pt}
\item Moreover, we would like to find appropriate ``uniformly bounded" vector fields such that $\epsilon$ is independent of the specific $X$. This is related to the possibility of defining an ``exponential map" from a ball in $T_{L_0}\TRL$ into $\TRL$.
\end{itemize}

To proceed we must determine the correct framework within which to analyze our $J$-holomorphic equations. 
We are trying to solve an elliptic problem on $I\times(-\epsilon, \epsilon)$ by prescribing data inside the domain rather than, say, on the boundary. 
Notice that the domain itself is not prescribed as $\epsilon$ is to be determined. 
To tackle this problem, it is natural to use the ``method of characteristics''. 
The initial data is assigned on the curve  $I\times\{0\}\subset I\times(-\epsilon,\epsilon)$: this curve is non-characteristic for our equation, so the method makes sense. 

Here, the only general existence result available is the Cauchy--Kowalevski theorem, which requires real analytic initial data. This 
regularity restriction is rather strong: from the geometric viewpoint one wants geodesics in the space of smooth immersions, built as 
above using maps $C^\infty\big(I\times (-\epsilon,\epsilon),M\big)$. 
On the other hand, when $M$ is complex, standard regularity theory implies that any solution is complex analytic with respect to  $s+it$. 
In particular, if the solution exists, the initial data $\iota_0\circ x(s)$ must be real analytic. 
We conclude that the analytic setting is actually natural for the geodesic problem stated above, where the Cauchy--Kowalevski theorem provides strong existence results in Theorem \ref{thm:exp}. 

This same reasoning also demonstrates an obstruction to the existence of solutions to the 
 Cauchy problem when the initial data is only assumed to be smooth. It is thus important to introduce a weaker notion of geodesic, as follows. 

 \paragraph{Geodesic rays.}
 
 In the standard theory of one complex variable, one often studies maps defined on closed domains: holomorphic on the interior, but only smooth or continuous up to the boundary. This leads us to the following.
 
 \begin{definition} \label{def:geodesic_ray}
 Fix $L_0\in\TRL$ and $JX\in T_{L_0}\TRL$. A \textit{geodesic ray} starting from $L_0$ with direction $JX$ is a curve of submanifolds $L_t$ in $\TRL$, for $t\in [0,\epsilon)$, for which there exists a curve of immersions $\iota_t$, for $t\in [0,\epsilon)$, with the following properties:
 \begin{itemize}
 \item $\iota_t$ is smooth on $L\times[0,\epsilon)$;\vspace{-4pt}
  \item for $t\in (0,\epsilon)$, $\iota_t$ solves the geodesic equation \eqref{geod.eq};
  \vspace{-4pt}
  \item $\iota_0$ parametrizes $L_0$.
 \end{itemize}
 \end{definition}
 
\noindent The existence problem for geodesic rays  is manifestly different from the Cauchy problem previously described.

\paragraph{The boundary value problem.} We can  now define  geodesics between two submanifolds $L_0$ and $L_1$ in $\TRL$ as geodesic rays interpolating between them. 

To prove the existence of such a geodesic it is necessary to find a vector field $X$ on $L$ and smooth, totally real immersions $\iota_t:L\rightarrow M$, for $t\in [0,1]$, so that:
\begin{itemize}
\item $(x,t)\mapsto \iota_t(x)$ is smooth on $L\times [0,1]$;\vspace{-4pt}
 \item for $t\in (0,1)$, $\iota_t$ solves \eqref{geod.eq};
 \vspace{-4pt}
 \item $\iota_0$, $\iota_1$ parametrize $L_0$, $L_1$.
  \end{itemize}
 As above, we can decompose a geodesic ray into a family of $J$-holomorphic
  curves parametrized by the integral curves of $X$ on $L$, thus defined on domains $I\times [0,\epsilon)$. For the boundary value problem, each  curve provides a $J$-holomorphic filling between the boundary data prescribed by $\iota_0$ on $I\times \{0\}$ and $\iota_1$ on $I\times \{1\}$.

 \begin{remark}
If $X$ defines a geodesic ray for $t\in [0,\epsilon)$ then $-X$ defines a geodesic ray for $t\in (-\epsilon,0]$ and the two induced families $L_t$ coincide, up to time reversal.
 \end{remark}

\subsection{Existence in the real analytic case}\label{ss:Cauchy_Kov}

The goal of this section is to prove the existence of an ``exponential map" on $\mathcal{T}$  in the real analytic context, with respect to a Fr\'echet-type metric, as follows.

\begin{thm}\label{thm:exp}
Let $L$ be a compact real analytic $n$-manifold, let $(M,J)$ be a real analytic almost complex $2n$-manifold such that $J$ is also real analytic, and let 
$\iota_0:L\to M$ be a real analytic, totally real immersion.  
Fix $m, R>0$ and let $\mathcal{B}(m,R)$ be the space of real analytic vector fields $X$ on $L$ with\footnote{Here, $\nabla$ and the norm are defined with respect to some choice of metric on $L$: since $L$ is compact, all metrics are equivalent.} 
\begin{equation}\label{exp.bounds}
\|\nabla^kX\|_{C^0}\leq mk!/R^k\ \ \text{for all $k\geq 0$.}
\end{equation} 
There exists $\epsilon>0$ (depending on $m,R$) such that, for each $X\in\mathcal{B}(m,R)$, there is a geodesic $(L_t)_{t\in(-\epsilon,\epsilon)}$ in $\mathcal{T}$ given by immersions $\iota_t:L\to M$ satisfying \eqref{geod.eq}
 and $\iota_t|_{t=0}=\iota_0$.
\end{thm}

In the above generality, the proof is an application of the Cauchy-Kovalewski theorem (see e.g.~\cite[Chapter 10 Theorem 4]{SpivakVolV}).  The key ingredient in this theorem goes under the name ``method of majorants". If $J$ is integrable the proof of Theorem \ref{thm:exp} is more transparent, and the non-integrable case works with the same method. We thus limit ourselves to the integrable setting.

To simplify, identify $L$ with its image $\iota_0(L)\subset M$. For each $p\in L$, choose an open polydisk $P_i\subseteq\C^n$ serving as a holomorphic coordinate chart for $M$, such that  $V_i:=P_i\cap\R^n$ is a coordinate chart for $L$. Then choose $U_i\ni p$ which is open and compactly contained in $V_i$. By compactness of $L$ we can extract a finite number of domains so that the $U_i$ cover $L$. We now proceed in two steps.
\begin{enumerate} 
\item Given $m,R>0$ we find $\epsilon>0$ such that, for $X$ satisfying \eqref{exp.bounds} and $x_0\in U_i$, there exists a unique real analytic integral curve $x:(-\epsilon,\epsilon)\to V_i$ of $X$ with $x(0)=x_0$.\vspace{-4pt}
\item We then show that each complexified power series $x(s+it)$, for $|s+it|<\epsilon$, takes values in $P_i$. Up to identifications, this allows us to define $\iota_t(s):=x(s+it)$ for $|s|<\epsilon/\sqrt{2}$, $|t|<\epsilon/\sqrt{2}$. By varying $x_0\in U_i$ we see that $\iota_t$ is well-defined on $L$, and satisfies \eqref{geod.eq} by construction. 
\end{enumerate}
The integral curve equation is a system of ODEs of the form $\dot{x}(s)=X(x(s))$. The existence of a unique smooth solution follows from standard ODE theory.  For the proof of Step 1, we review how the method of majorants shows that this solution is real analytic and furnishes a radius of convergence of the corresponding power series. It suffices to focus on the scalar ODE case. 

Suppose we have open sets $0\in U\subset V\subseteq \R$ with $U$ compactly contained in $V$. Assume we want to solve the scalar ODE:
\begin{equation}\label{scalar.ODE}
\dot{x}(s)=f(x(s)),\ \ x(0)=x_0\in U,
\end{equation}
where $f$ is any real analytic function defined on $V$ satisfying 
\begin{equation}\label{exp.bounds.2}
\|f^{(k)}\|_{C^0(U)}\leq mk!/R^k \ \ \text{for all $k\geq 0$;}
\end{equation} equivalently, for any $x_0\in U$ the coefficients of the power series representation  $f(x)=\sum a_n(x-x_0)^n$ satisfy $a_n\leq m/R^n$. 
Recall the following definition.

\begin{definition} The power series $A(x)=\sum A_n x^n$ is a majorant of the power series $a(x)=\sum a_n x^n$, and we write $a<<A$, if $|a_n|\leq A_n$ for all $n\geq 0$.
\end{definition}
If $a<<A$ it follows that (i) if $A$ converges with radius $R_A$ then $a$ converges with radius $R_a\geq R_A$, and (ii) if we fix $\epsilon\in (0,R_A)$ we can uniformly bound the values of $a$: $|a(x)|\leq \sum|a_n||x|^n\leq \sum A_n\epsilon^n=A(\epsilon)$, for all $|x|\leq \epsilon$.

Consider first $x_0=0$. The  bounds \eqref{exp.bounds.2} on $f$ show that, if $F(x)=mR/(R-x)$ then the power series of $f$ based at $0$ satisfies $f<<F$. The method of majorants  shows, by examining the induced equations on the higher derivatives, that the power series of the solution $x(s)$ of \eqref{scalar.ODE} satisfies $x<<\xi$, where $\xi$ solves
\begin{equation}\label{eq:majorant}
\dot{\xi}(s)=F(\xi(s)), \ \ \xi(0)=0.
\end{equation} 
For arbitrary $x_0\in U$ consider the power series of $f$ based at $x_0$: $f(x)=\sum a_n(x-x_0)^n$. If we set $y(s):=x(s)-x_0$ so that $y(0)=0$ we find $\dot{y}=\sum a_ny^n$. The bounds \eqref{exp.bounds.2} imply that this equation for $y$ can again be compared with \eqref{eq:majorant}, so that $y<<\xi$. We conclude that $x<<\xi+x_0$.

 Equation \eqref{eq:majorant} can be explicitly solved, which gives the radius of convergence of $\xi$ in terms of $m,R$. We thus obtain (i) a lower bound on the radius of convergence of $x(s)$, for any $f$ satisfying \eqref{exp.bounds.2} and initial data $x_0\in U$, and (ii) an upper bound on the values of $|x(s)|$ for $s\in(-\epsilon,\epsilon)$ in terms of $\xi(\epsilon)+x_0$. In particular, since $U$ is compactly contained in $V$, by restricting $\epsilon$ we may assume that all solutions $x(s)$, for $x_0\in U$, are contained in $V$. Step 1 can be proved by applying the same reasoning to $\dot{x}=X(x)$ in each coordinate chart $V_i$. 

Our assumption that $J$ is integrable allows us to complexify this data by simply complexifying the corresponding power series; if $J$ were only almost-complex this is where we would use the Cauchy--Kovalewski theorem to prove the existence of such complexified data, \textit{i.e.} to obtain solutions $x(s,t)$ of the equation 
$$\frac{\partial x}{\partial t}_{|(s,t)}=J(x)\frac{\partial x}{\partial s}_{|(s,t)}, \ \ x(s,0)=x(s).$$ 
Notice that, although $x(s)$ is contained in $V_i$, we should not automatically assume that the values of $x(s+it)$, for $|s+it|<\epsilon$, are contained in $P_i$. However, our method of bounding $|x(s)|$ applies also to $|x(s+it)|$: this follows from the general fact that one can bound $|\sum a_n(s+it)^n|$ with $\sum |a_n| |s+it|^n$.

As explained in Step 2, this concludes the proof of Theorem \ref{thm:exp}.

\subsection{Example: the 1-dimensional case}\label{ss:dim1}

We now turn to smooth data. It is instructive to study the Cauchy problem in the simplest case, where $M=\C$ and $L_0$ is a smooth, closed, Jordan curve. Recall that $\C\setminus L_0$ has two components: one bounded, one unbounded. We   view $L_0$ as an embedding $\iota_0=\iota_0(\theta)$ of the abstract manifold $L:=\R/2\pi\Z$. For dimensional reasons any such embedding is totally real. To be concrete, we use the standard orientation on $L$ defined by increasing angles and we assume the embedding is chosen so that $L_0$ is oriented in the anti-clockwise direction.
 
By our definition, geodesics through $L_0$  are generated by a choice of tangent vector field $X$. Since $L$ is parallelizable and has the canonical, positively oriented, vector field $\partial\theta$, we have $X=f\partial\theta$, for some $f: L\rightarrow \R$. The corresponding geodesic in $\mathcal{T}$ is determined by the 1-parameter family of curves
$$\iota:L\times (-\epsilon,\epsilon) \rightarrow \C$$
such that $\iota=\iota_0$ for $t=0$ and
\begin{equation}\label{eq:curve_geodesic}
 \frac{\partial\iota}{\partial t}=if\frac{\partial\iota}{\partial \theta}.
\end{equation}
This coincides with the geodesic equation \eqref{geod.eq}. In particular, when $f\equiv 1$ this means that $\iota$ is holomorphic with respect to the standard complex structure on the cylinder $L\times(-\epsilon,\epsilon)$. We can use the biholomorphism with the annulus
\begin{equation}\label{eq:bihol_annulus}
\phi:L\times(-\epsilon, \epsilon)\rightarrow A:=\{e^{-\epsilon}<|z|<e^{\epsilon}\}, \ \ \phi(\theta,t):=e^{-t}e^{i\theta}
\end{equation}
to reparametrize $\iota$ as a holomorphic map $g:=\iota\circ\phi^{-1}:A\rightarrow\C$. Our choice of orientations imply that, as $t$ increases from $0$, the geodesics invade the bounded component of  $\C\setminus L_0$.

We now show that the $f\equiv 1$ case is, in some sense,  general. 
Indeed, using the ideas of Section \ref{ss:reformulation}, we can  integrate $X=f\partial\theta$. If $f$ has no zeros, \textit{i.e.}~$X$ never vanishes, then the integral curve through any    point of $L$ is periodic and its parameter set is compact: we can identify it with $\S^1_R:=\R/2\pi R\Z$, for some $R>0$. The integral curve is then a (possibly orientation-reversing) diffeomorphism 
\begin{equation}
 \S^1_R\rightarrow L, \ \ s\mapsto \theta(s)\ \ \mbox{such that}\ \ \theta'=f\circ\theta. 
\end{equation}
The map $\iota(\theta(s),t)$ is holomorphic on the cylinder $\S^1_R\times(-\epsilon,\epsilon)$ (with the standard complex structure). Again, we can use the biholomorphism with the annulus
\begin{equation}\label{eq:bihol_annulus_R}
\phi:\mathcal{S}^1_R\times(-\epsilon, \epsilon)\rightarrow A_R:=\{e^{-\epsilon/R}<|z|<e^{\epsilon/R}\}, \ \ \phi(\theta,t):=e^{-t/R}e^{i\theta/R}
\end{equation}
to reparametrize $\iota$ as a holomorphic map $g:A_R\rightarrow\C$. Notice that this implies a rescaling of the initial vector field $X$.

We summarize this discussion as follows.

\begin{lem}\label{l:geodesic_annulus}
Fix $L_0$ and a nowhere-vanishing vector field $X$ as above. Then, up to a rescaling of $X$, the geodesic family of curves $L_t$ defined by this data is equivalent to a holomorphic map $g$ defined on an annulus in $\C$ containing $\Sph^1$. Each $L_t$ is the image under $g$ of some circle $\{|z|=r\}$; in particular  $L_0$ is the image of\/ $\Sph^1$ and $X$ corresponds to $\pm\partial\theta$ depending on the sign of $f$. 

Assume, for example, $f>0$. Then, as the radial parameter $r$ decreases from $1$, the corresponding curves invade the bounded component  of $\C\setminus L_0$.
\end{lem}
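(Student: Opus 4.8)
The plan is to package the construction from the paragraphs immediately preceding the lemma into a clean statement; almost all of the analytic content is already there, so the proof is mostly bookkeeping together with the orientation argument for the last sentence. First I would note that, since $f$ never vanishes, $X=f\,\partial\theta$ is a nowhere-zero vector field on the circle $L=\R/2\pi\Z$; hence every integral curve is periodic, and all of them have the \emph{same} period $P=\oint \d\theta/f$. Setting $R:=P/2\pi>0$ and parametrizing one integral curve by $s\in\Sph^1_R:=\R/P\Z$, we obtain a diffeomorphism $s\mapsto\theta(s)$ with $\theta'=f\circ\theta$, and, exactly as in Lemma \ref{geod.lem}, the composition $(s,t)\mapsto\iota(\theta(s),t)$ satisfies $\partial_t(\iota\circ\theta)=i\,\partial_s(\iota\circ\theta)$ on $\Sph^1_R\times(-\epsilon,\epsilon)$ and is well defined there because $\theta(s+P)=\theta(s)$.

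Next I would transport this holomorphic cylinder onto an annulus by the explicit biholomorphism of \eqref{eq:bihol_annulus_R}, $\phi(s,t)=e^{-t/R}e^{is/R}$, with image $A=\{e^{-\epsilon/R}<|z|<e^{\epsilon/R}\}\ni\Sph^1$, and define $g:=\iota\circ\phi^{-1}:A\to\C$. Then $g$ is holomorphic; the slice $t=\mathrm{const}$ is carried to the circle $\{|z|=e^{-t/R}\}$, so $L_t=g(\{|z|=e^{-t/R}\})$ and in particular $L_0=g(\Sph^1)$. For the statement about $X$, I would compute the differential of the induced reparametrization $\Sph^1\to L_0$, namely $e^{i\psi}\mapsto\iota_0(\theta(R\psi))$: it sends the standard angular field $\partial_\psi$ to $R\,\theta'(R\psi)\,\partial\theta=R\,(f\partial\theta)$, i.e.\ to $R$ times $X$ at the corresponding point, so $X$ is identified with $\pm\partial\theta$ (up to the harmless positive factor $1/R$, which merely rescales the annulus), the sign being that of $f$ once the biholomorphism is normalized so that $g|_{\Sph^1}$ runs over $L_0$ in its given anti-clockwise orientation.

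For the final claim I would reproduce the orientation reasoning already used in the text for the case $f\equiv 1$. Taking $f>0$: since $L_0=\iota_0(L)$ is oriented anti-clockwise, $\partial\theta\,\iota_0$ is the positively oriented tangent field of $L_0$, so the geodesic velocity $\partial_t\iota|_{t=0}=if\,\partial\theta\,\iota_0$ is $f>0$ times the quarter-turn anti-clockwise rotation of that tangent, hence a positive multiple of the inward-pointing normal to $L_0$. Therefore, for small $t>0$ --- equivalently for $r=e^{-t/R}$ slightly less than $1$ --- the curve $L_t$ enters the bounded component of $\C\setminus L_0$; and since $\partial_t\iota=if\,\partial\theta\,\iota$ with $f$ of constant sign holds at every time, the curves keep moving inward as $r$ decreases.

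I expect the one genuinely delicate point to be making ``invade'' precise beyond the infinitesimal level: the curves $L_t$ need not remain embedded once $t$ is bounded away from $0$, so a fully global assertion (say, that $\bigcup_{0<t<\epsilon}L_t$ lies in the bounded component) would require a separate argument --- most naturally via the argument principle, comparing the winding numbers of $g(\{|z|=r\})$ about points of $\C\setminus L_0$ with those of $L_0$. The orientation computation above is exactly the level at which the surrounding discussion uses this statement, and I would present the lemma accordingly.
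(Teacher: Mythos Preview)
Your proposal is correct and follows essentially the same route as the paper: the lemma is explicitly presented there as a summary of the discussion preceding it, and your proof recapitulates exactly that construction (integrate $X$ to reparametrize by $s\in\Sph^1_R$, observe that the composed map is holomorphic on the cylinder, push through the biholomorphism \eqref{eq:bihol_annulus_R}). Your orientation argument for the ``invasion'' claim is slightly more explicit than the paper's, and your closing caveat about the global meaning of ``invade'' is a fair point that the paper leaves at the same infinitesimal level you do.
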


\begin{remark}
Our discussion above indicates that the 1-dimensional case has a special feature. Recall from Lemma \ref{l:H_defines_connection} that the horizontal distribution $H$ on $\mathcal{P}$ is invariant under reparametrization. This means  we can find all geodesics through $L_0$ by fixing any initial parametrization $\iota_0$ and considering all possible vector fields: the geodesic in $\TRL$ defined by a different choice $(\iota_0 \circ\phi,X)$ will coincide with the geodesic defined by $(\iota_0,\phi_*X)$. Thus, in general there is no advantage to changing the parametrization. In dimension 1, however, $\Diff(L)$ acts transitively on the non-vanishing vector fields (up to a change of scale). Above, we use this to  bring $X$ into ``standard form'' $\partial\theta$, thus reducing the geodesic equation \eqref{geod.eq} to the standard Cauchy--Riemann equation. However, note that, when $f<0$, this strategy clashes with our initial decision to work with oriented submanifolds,
 \textit{i.e.}~to only 
use orientation-preserving diffeomorphisms: this is easily fixed by the observation that the geodesics defined by $X$ and $-X$ coincide, up to time reversal. To find all geodesics, it it thus enough to concentrate on those for which $f>0$. A similar remark applies to vector fields with zeros (see below).
\end{remark}

If the vector field $X=f\partial\theta$ has zeros, then between any two zeros
 the new parameter set will be $\R$ and the geodesic equation \eqref{geod.eq} pulls back
 to the standard Cauchy--Riemann equation on $\R\times (\epsilon,\epsilon)$. The
  zeros  correspond to stationary points of the family of curves.

As already mentioned, there is a necessary condition for the existence of solutions to this equation: the initial curve must be real analytic. This condition is also sufficient: given a local power series expansion of $\iota_0$ with respect to the real variable $\theta$, we obtain a holomorphic extension by replacing $\theta$ with $\theta+it$. 

In Section \ref{ss:reformulation}, in order to remain in the smooth category, we introduced geodesic rays. Using the above ideas, we can study geodesic rays in the 1-dimensional case and obtain a conclusion analogous to Lemma \ref{l:geodesic_annulus}. In particular, the geodesic ray defined by $L_0$ and a non-vanishing vector field $X$ is equivalent to a holomorphic map $g$ defined on an annulus in $\C$ of the form $R_1<|z|<1$, smooth up to $|z|=1$. Elliptic regularity theory shows that, if the boundary data is smooth, then $g$ is smooth up to $|z|=1$ even if in Definition \ref{def:geodesic_ray} we assumed the geodesic ray were only continuous with respect to $t$, at $t=0$.

These results allow us to study the existence of geodesics using holomorphic function theory.

\paragraph{Geodesics via Fourier theory.}
We showed above that an initial curve $L_0$ and non-vanishing vector field $X$ can be parametrized (up to rescaling $X$) via a map $\gamma:\Sph^1\rightarrow\C$ and the standard vector field $\partial\theta$. By Lemma \ref{l:geodesic_annulus}, this data defines a geodesic if and only if it admits a holomorphic extension $g$ on an annulus $A$. Recall from standard theory that such $g$ admit a Laurent power series representation  $\sum_{n=-\infty}^{\infty}a_nz^n$, convergent on $A$. The coefficients $a_n$ coincide with the Fourier coefficients of the periodic function $\gamma$. It follows that the existence of $g$, \textit{i.e.} of the geodesic, depends on the convergence of the formal power series defined by the Fourier coefficients of $\gamma$.

Recall that the Fourier coefficients of the curve are square-summable. Conversely, given a square-summable sequence of  coefficients $a_n\in\C$, for $n\in\Z$, we can ask whether it defines a curve $\gamma:\Sph^1\rightarrow\C$ admitting a holomorphic extension $g$. Let $p_1(z)=\sum_{n=-1}^{-\infty}a_nz^n$ and $p_2(z)=\sum_{n=0}^{\infty}a_nz^n$.

\begin{itemize}
 \item If $p_1$ and $p_2$ have radii of convergence $R_1<1$ and $R_2>1$ respectively, then the Laurent series $p_1+p_2$ converges on the annulus $R_1<|z|<R_2$ and defines an embedding $\gamma$ of $\Sph^1$. The image curve $L_0$ admits a geodesic corresponding to $\partial\theta$.\vspace{-4pt}
 \item If $p_1$ has radius of convergence $R_1<1$ and $p_2$ has radius of convergence $1$ and converges for $|z|=1$, then the Laurent series $p_1+p_2$ converges on $R_1<|z|\leq 1$ and defines an embedding $\gamma$ of $\Sph^1$. The image curve $L_0$ admits a geodesic ray corresponding to  $\partial\theta$.\vspace{-4pt}
 \item If $p_1$ and $p_2$ have radius of convergence $1$ and converge for $|z|=1$ then the Laurent series degenerates: it converges only on $\Sph^1$, defining a  curve $L_0$ which does not admit a geodesic or geodesic ray corresponding to  $\partial\theta$.
\end{itemize}

\begin{example}
 Set $a_n:=1/n^{\log n}$ for $n\geq 1$. Then  $\sum_{n=1}^\infty a_n z^n$ has radius of convergence $1$ and converges absolutely for $|z|\leq 1$, together with all derivatives. Adding this to any series $\sum_{n=-1}^{-\infty} a_nz^n$ with radius of convergence $R_1<1$ gives smooth curves which admit geodesic rays but not geodesics corresponding to $\partial\theta$. We can also combine it with $\sum_{n=-1}^{-\infty}|n|^{-\log |n|}z^n$ to obtain a smooth curve which admits neither a geodesic nor a geodesic ray corresponding to that $\partial\theta$.

 To obtain examples which are only continuous, set $a_n:=1/n^2$.
 \end{example}

\paragraph{Geodesics via the Riemann Mapping Theorem.}Choose two closed Jordan curves $L_0$, $L_1$ in $\C$ which do not intersect. 
Let $\Omega$ be the region contained between these curves. A version of the Riemann mapping theorem, cf.~\cite[Theorem 5.8]{Conway}, proves that there exists an annulus $A$ and a biholomorphism $g:A\rightarrow\Omega$ which extends continuously to the boundary; if $L_0$, $L_1$ are smooth then the biholomorphism extends smoothly to the boundary. The restriction to the boundary provides parametrizations of $L_0$, $L_1$; setting $X=\partial\theta$ the theorem shows that for any two curves as above it is possible to solve the boundary value problem.

\begin{remark}
 Notice the regularizing behaviour of the geodesic equation \eqref{geod.eq} even for the boundary value problem: for all intermediate times $t\in (0,1)$, the corresponding curves are real analytic.
\end{remark}

\paragraph{Concluding remarks.} We summarize what we have learned from the 1-dimensional theory. Given an embedded curve $L_0\subset\C$, we  showed  the following.
\begin{itemize}
\item Infinitesimal deformations correspond to parametrizations (via integration of the tangential vector field $f\partial \theta$).\vspace{-4pt}
\item A geodesic in a given direction corresponds to a holomorphic extension of the corresponding parametrization.\vspace{-4pt}
\item Examples show certain curves do not admit geodesics in certain directions.\vspace{-4pt}
\item Given any curve $L_0$, there always exist infinite geodesic rays departing from it (corresponding to the arbitrary choice of a  second curve $L_1$).
\end{itemize}
This suggests the existence question for geodesics is non-trivial, but  not vacuous. A similar situation occurs in the analogous theory concerning K\"ahler metrics, cf.~Section \ref{s:pottyK}. There a weak notion of geodesics was found, leading to a satisfactory existence theory. We expect something similar is needed here. In particular, observe that our geodesic equation \eqref{geod.eq} is first order, rather than second order as one might except: this corresponds to the fact that, in keeping with the principal fibre bundle viewpoint, it is expressed in terms of the velocity vector (being constant) rather than of the curve. Developing alternative expressions for geodesics and further investigation of the properties of the connection may contribute key ingredients to the existence theory.

\subsection{Further results}

Some of these same ideas can be extended to higher dimensions. 

\paragraph{Existence and non-existence results when {\boldmath $M=\C^n$}.} Consider a compact totally real submanifold $L_0$ in $\C^n$ and a tangent vector field $X$. Choose an integral curve $x=x(s)$ and a parametrization $\iota_0$, with components $\iota_0^i$. If the curve $x$ is closed we can study the existence of holomorphic extensions of $\gamma:=\iota_0\circ x(s)$ exactly as when $n=1$, by examining its component functions $\iota_0^i\circ x(s)$. This does not work if the curve is open, parametrized by $\R$. Notice however that the image of $\gamma$ is contained in $L_0$, so it is bounded. We can thus interpret $\gamma$ as a (smooth) tempered distribution and replace the role of Fourier coefficients with Fourier transforms. In particular, we expect to obtain information concerning  existence of holomorphic extensions of $\gamma$ using the Paley--Wiener theorems. It is known for example, cf.~\cite[Theorem 7.23]{Rudin}, that if the transform of $\gamma$ has compact support then $\gamma$ admits an 
entire 
holomorphic extension (satisfying certain growth conditions). Notice that in this case the transform of $\gamma$ will generally not be smooth, otherwise it would be $L^2$ so $\gamma$ would also be $L^2$.

This applies also to any complex manifold $M$, as long as the submanifold is contained in one chart.

\paragraph{Uniqueness of geodesics.}
Perhaps the most interesting feature of our reformulation of the geodesic equation \eqref{geod.eq} is that it gives a fairly complete answer to the uniqueness question. Indeed, by restricting $\iota_0$ to each integral curve we see that it suffices to prove the following claim: any two $J$-holomorphic maps $\iota(s,t)$, $\iota'(s,t)$ which coincide for $t=0$ coincide for all $t$.

In the holomorphic case (when $J$ is integrable) the proof is simple. As above, $\iota$ corresponds locally to a collection of holomorphic functions, defined by its components in $\C^n$. Uniqueness for the Cauchy problem then follows from the standard identity principle for holomorphic functions. Uniqueness for geodesic rays follows instead from the standard reflection principle.

If $J$ is only almost complex the situation is more subtle. Uniqueness for the Cauchy problem is then a consequence of the ``unique continuation theorem'' for $J$-holomorphic curves, cf.~\cite[Theorem 2.3.2]{MS}. It seems reasonable that, using results in the literature, one could also prove uniqueness for geodesic rays.

\begin{remark}
In the real analytic case the uniqueness of real analytic solutions is  part of the Cauchy--Kowalevski theorem. One might hope to improve on this, obtaining uniqueness in the smooth category, using Holmgren's uniqueness theorem, cf.~\cite[Chapter 21]{Treves}.
However, Holmgren's theorem concerns only linear equations and this corresponds to an important difference between the holomorphic and  pseudo-holomorphic equations. In the former case, in local coordinates, the operator $J$ is constant so the Cauchy--Riemann equation is indeed linear. Holmgren's theorem thus applies to give an alternative proof of the uniqueness of geodesics and geodesic rays.
In general almost complex manifolds, instead, the Cauchy--Riemann equation is not locally linear.
\end{remark}

\subsection{Example: the 1-dimensional case, continued}
We now examine the notion of geodesic convexity from Definition \ref{convex.dfn} in the 1-dimensional case by exhibiting an example of a convex functional. This functional is well-known: it is the standard length functional. Its convexity is a rather striking fact, and it is  worth emphasizing it by giving two proofs.  The first relies on the specific nature of the geodesic equation by bringing into play  
basic holomorphic function theory. As above, it assumes we have reparametrized the curve by integrating $f\partial\theta$, but it requires that the domain 
remains compact. This first proof also leads to a monotonicity result for the length functional. The second proof is a direct geometric calculation, and
 holds for all $f$.

\begin{prop}
The length functional on closed curves in $\C$ is convex in the sense of Definition \ref{convex.dfn}.
\end{prop}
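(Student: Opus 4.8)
The plan is to reparametrize the geodesic holomorphically, using Lemma~\ref{l:geodesic_annulus}, and then reduce the statement to the classical convexity theorem for subharmonic functions on an annulus alluded to in the Introduction. Let $L_t$ be a geodesic generated, as in that lemma, by a nowhere-vanishing field $X=f\partial\theta$; replacing $X$ by $-X$ if necessary (which only reverses time, cf.\ the remarks in Section~\ref{ss:dim1}) we may take $f>0$. Lemma~\ref{l:geodesic_annulus} then supplies a holomorphic map $g\colon A\to\C$ on an annulus $A$ containing $\Sph^1$ such that, under the biholomorphism \eqref{eq:bihol_annulus_R}, the curve $L_t$ is the image under $g$ of the circle $\{|z|=r\}$ with $\log r=-t/R$ for a fixed $R>0$. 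The point to record here is that $\log r$ is an \emph{affine} function of $t$, so that convexity in $t$ is equivalent to convexity in $\log r$.

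Next I would rewrite the length functional in these coordinates. Parametrizing $L_t$ by $\psi\mapsto g(re^{i\psi})$ and writing $z=re^{i\psi}$, one has $\partial_\psi g(re^{i\psi})=i\,z\,g'(z)$, hence
\[
\mathrm{Length}(L_t)=\int_0^{2\pi}\bigl|\partial_\psi g(re^{i\psi})\bigr|\,d\psi=\int_0^{2\pi}\bigl|h(re^{i\psi})\bigr|\,d\psi,\qquad h(z):=z\,g'(z).
\]
Since $g$ is holomorphic on $A$, so is $h$, and therefore $|h|=\exp(\log|h|)$ is subharmonic on $A$ (the modulus of a holomorphic function is subharmonic; equivalently $\log|h|$ is subharmonic and $\exp$ is convex and increasing). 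The final step is to invoke the classical theorem of F.~Riesz referred to in the Introduction: if $v$ is subharmonic on an annulus $\{R_1<|z|<R_2\}$, then $\rho\mapsto\int_0^{2\pi}v(e^{\rho}e^{i\psi})\,d\psi$ is a convex function of $\rho$ on $(\log R_1,\log R_2)$. Applying this with $v=|h|$ shows that $\log r\mapsto\mathrm{Length}(L_t)$ is convex; since $\log r$ is affine in $t$, the length is convex along $L_t$, which is the assertion of Definition~\ref{convex.dfn}.

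The argument as sketched covers only geodesics generated by a nowhere-vanishing vector field, where the reparametrization keeps the domain compact and yields a genuine annulus: handling a general tangential field $f\partial\theta$, in particular one with zeros, is precisely the gap here, and is the reason for also giving a second, direct geometric proof valid for all $f$. I do not expect a real obstacle in the argument above; the only imported ingredient is Riesz's convexity theorem, whose proof rests on the sub-mean-value inequality for subharmonic functions together with the harmonicity of $\log|z|$ (so that on each sub-annulus one compares $v$ with the affine-in-$\log r$ function matching its circular means at the two ends, or, where everything is smooth, simply differentiates twice under the integral sign).

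Finally, the same computation yields the monotonicity statement mentioned above. When $g$, equivalently $h$, extends holomorphically over the entire bounded region enclosed by $L_0$, the circular means $\int_0^{2\pi}|h(re^{i\psi})|\,d\psi$ have a finite limit as $r\to0$; and a convex function of $\log r$ that has a finite limit as $\log r\to-\infty$ is automatically non-decreasing. Hence the length decreases monotonically as the curves $L_t$ invade that region, reproving and refining the convexity in this special case.
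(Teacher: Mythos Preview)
Your argument is essentially the paper's first proof: reparametrize via Lemma~\ref{l:geodesic_annulus}, write the length as $\int_0^{2\pi}|zg'(z)|\,d\theta$, observe that $|zg'|$ is subharmonic, and invoke Riesz's convexity theorem for circular means; the monotonicity remark also matches. So for geodesics generated by a nowhere-vanishing $X$ you have exactly what the paper does.

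The gap you flag is real and is not merely cosmetic: when $f$ has zeros the integral curves are parametrized by $\R$, the holomorphic domain becomes a strip rather than an annulus, and the compactness that makes the Riesz argument go through is lost. You correctly diagnose that a second, direct proof is needed, but you do not supply it. The paper closes this gap by computing the second variation of the length along an arbitrary geodesic: parametrize $L_0$ by arclength, write $\dot\gamma_t'=if\gamma_t''+if'\gamma_t'$ and $\ddot\gamma_t=if\dot\gamma_t'$, expand using $\gamma_0''=i\kappa_0\gamma_0'$, and one finds
\[
\frac{d^2}{dt^2}\lambda(\gamma_t)\Big|_{t=0}=\int_L\big((f')^2+f^2\kappa_0^2\big)\,ds\ge 0,
\]
valid for every $f$. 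Without this (or an equivalent argument) your proposal does not yet establish convexity along all geodesics, only along those with $X$ nowhere vanishing.
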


\begin{proof}
For the first proof, assume we are given a holomorphic map on the cylinder 
$$\gamma:\S^1_R\times(-\epsilon,\epsilon) \rightarrow \C,$$
where $\S^1_R=\R/2\pi R\Z$.  
Let $w=s+it$ denote the complex variable on the cylinder and $\lambda=\lambda(t):(-\epsilon, \epsilon)\rightarrow\R$ the length of the curve $\gamma(\cdot, t)$. Explicitly,
$$\lambda=\int_0^{2\pi R}\left|\frac{\partial \gamma}{\partial s}\right|\,ds=\int_0^{2\pi R}\left|\frac{\partial \gamma}{\partial w}\right|\,ds.$$
We want to prove that $\lambda$ is convex with respect to $t$.

The biholomorphism $z=\phi(s,t)$ in \eqref{eq:bihol_annulus_R}
allows us to reformulate the problem in terms of a map $g=g(z):A_R\rightarrow\C$ such that $\gamma=g\circ\phi$. Notice that $g$ is holomorphic if and only if $\gamma$ is holomorphic and their complex derivatives satisfy $|\frac{\partial\gamma}{\partial w}|=(1/R)|\phi\frac{\partial g}{\partial z}|$. Using polar coordinates on $\C$ and setting 
$$\Lambda(r):=\int_0^{2\pi}\left|z\frac{\partial g}{\partial z}\right|\,d\theta,$$
it follows that $\lambda(t)=\Lambda(e^{-t/R})$. It thus suffices to prove that $\Lambda\circ \exp$ is convex, \textit{i.e.} that on any segment $[t_1,t_2]$ the graph of $t\mapsto\Lambda(e^t)$ is below the graph of the linear function passing through the points $(t_1,\Lambda(e^{t_1}))$, $(t_2,\Lambda(e^{t_2}))$.

Notice that  
$z\frac{\partial g}{\partial z}$ is holomorphic, so its norm $u:=|z\frac{\partial g}{\partial z}|$ is a subharmonic function on the 
annulus. Convexity is then a classical result due to Riesz and proved as follows. Let $v$ denote the harmonic function on the annulus $A:=\{z:r_1\leq |z|\leq r_2\}\subseteq A_R$ such that $v$ coincides with $u$ on the boundary. Notice that
$$\frac{d}{dr}\int_0^{2\pi} v(re^{i\theta})\,d\theta=\int_0^{2\pi} \frac{d}{dr} v(re^{i\theta})\,d\theta=\frac{1}{r}\int_0^{2\pi} \frac{\partial v}{\partial n}(re^{i\theta})\,d\sigma,$$
where $d\sigma=r\,d\theta$. Since $v$ is harmonic, the divergence theorem shows that $r\mapsto\int_0^{2\pi} \frac{\partial v}{\partial n}\,d\sigma$ is constant, so by subharmonicity  
$$\int_0^{2\pi} u(re^{i\theta})\,d\theta\leq \int_0^{2\pi} v(re^{i\theta})\,d\theta=a\log r+b,$$
for some constants $a,b\in\R$; our choice of boundary data implies that equality holds when $r=r_1$ or $r=r_2$. Changing variables we obtain the desired property of $\Lambda(e^t)$.

Similar methods show that if $g$ extends to a holomorphic function on the disk then $\Lambda(r)$ is non-decreasing, so $\lambda(t)$ is non-increasing.   

\ 

For the second proof, we first parametrize the curve  by arclength: it is thus the image of some map $\gamma_0(s)$, where $s\in L$. Choose a vector field $X=f\partial s$ and let $\gamma(s,t)=\gamma_t(s)$ be a family of curves satisfying the corresponding geodesic equation (\ref{eq:curve_geodesic}). Set $\gamma':=\frac{\partial\gamma}{\partial s}$ and $\dot{\gamma}:=\frac{\partial\gamma}{\partial t}$ for a cleaner exposition.
 
The length functional along $\gamma_t$ is given by 
$$\lambda(\gamma_t)=\int_L|\gamma_t'|\d s.$$

We first calculate
\begin{align*}
\frac{\d}{\d t}\lambda(\gamma_t)&=\int_L\frac{\partial}{\partial t}\langle \gamma_t' , \gamma_t'\rangle^{\frac{1}{2}}\d s=\int_L|\gamma_t'|^{-1}
\langle \dot{\gamma_t}', \gamma_t'\rangle\d s.
\end{align*}
Then
\begin{align*}
\frac{\d^2}{\d t^2}\lambda(\gamma_t)&=\int_L|\gamma_t'|^{-1}\left(\langle \ddot{\gamma_t}',\gamma_t'\rangle+|\dot{\gamma_t}'|^2\right)
-|\gamma_t'|^{-3}\langle \dot{\gamma_t}',\gamma_t'\rangle^2\d s.
\end{align*}
Since $|\gamma_t'|^{-1}\gamma_t'$ is a unit tangent vector we have that 
\begin{align*}
\frac{\partial}{\partial s}\left(|\gamma_t'|^{-1}\gamma_t'\right)&=|\gamma_t'|^{-1}\gamma_t''-|\gamma_t'|^{-3}\langle \gamma_t'', \gamma_t'\rangle\gamma_t'
=i\kappa_t|\gamma_t'|^{-1}\gamma_t',
\end{align*}
where $\kappa_t$ is the curvature of $\gamma_t$.  Therefore,
$$\gamma_t''=|\gamma_t'|^{-2}\langle\gamma_t'',\gamma_t'\rangle\gamma_t'+i\kappa_t\gamma_t'.$$
Hence,
$$\dot{\gamma_t}'=\frac{\partial}{\partial s}\dot{\gamma_t}=\frac{\partial}{\partial s}(if\gamma_t')
=if\gamma_t''+if'\gamma_t'=-f\kappa_t\gamma_t'+i(f'+f|\gamma_t'|^{-2}\langle\gamma_t'',\gamma_t'\rangle)\gamma_t'.$$
Moreover,
$$\ddot{\gamma_t}=\frac{\partial}{\partial t}(if\gamma_t')=if\dot{\gamma_t}'=
-f(f'+f|\gamma_t'|^{-2}\langle\gamma_t'',\gamma_t'\rangle)\gamma_t'-if^2\kappa_t\gamma_t'.$$

Since we will be taking no further $t$ derivatives and $\gamma_0$ was arbitrary we can now set $t=0$ without loss of generality. 
In this case, 
because $|\gamma_0'|=1$ we see that $\langle\gamma_0'',\gamma_0'\rangle=0$ and thus $\gamma_0''=i\kappa\gamma_0'$ where $\kappa_0$ is the curvature of $\gamma_0$.  Hence, 
$$\dot{\gamma_t}'|_{t=0}=-f\kappa_0\gamma_0'+if'\gamma_0'\quad\text{and}\quad \ddot{\gamma_t}|_{t=0}=-ff'\gamma_0'-if^2\kappa_0\gamma_0'.$$
We therefore see that 
\begin{align*}
\ddot{\gamma_t}'|_{t=0}&=-(ff')'\gamma_0'-ff'\gamma_0''-i(f^2\kappa_0)'\gamma_0'-if^2\kappa_0\gamma_0''\\
&=(f^2\kappa_0^2-(ff')')\gamma_0'-i(ff'\kappa_0+(f^2\kappa_0)')\gamma_0'.
\end{align*}

Putting these formulae together we see that
\begin{align*}
\langle\ddot{\gamma_t}'',\gamma_t'\rangle|_{t=0}&=f^2\kappa_0^2-(ff')', \quad
|\dot{\gamma_t}'|^2|_{t=0}=f^2\kappa_0^2+(f')^2, \quad
\langle\dot{\gamma_t}',\gamma_t'\rangle^2=f^2\kappa_0^2.
\end{align*}
We deduce that
\begin{align*}
\frac{\d^2}{\d t^2}\lambda(\gamma_t)|_{t=0}&=\int_Lf^2\kappa_0^2-(ff')'+f^2\kappa_0^2+(f')^2-f^2\kappa_0^2\,\d s\\
&=\int_L(f')^2+f^2\kappa_0^2\,\d s\geq 0
\end{align*}
since $\int_L(ff')'\d s=0$.  Therefore the length $\lambda(\gamma_t)$ is a convex function of $t$.
\end{proof}

\section{A canonical volume functional}\label{s:canonical_data}
In higher dimensions the standard Riemannian volume functional is not convex with respect to our notion of geodesics. This is hardly surprising: when $n\geq 2$ the totally real condition is an extra assumption, but the volume functional does not interact with this condition. The goal of this section is to show that, for totally reals, there is an alternative volume functional which (i) is canonical, (ii) depends on the totally real condition and (iii) is convex in certain situations. 

To define this functional we need an alternative characterization of totally real planes: an $n$-plane $\pi$ in $T_pM$ is totally real if and only if $\alpha|_{\pi}\neq 0$ 
for all (or, for any) $\alpha\in K_M(p)\setminus\{0\}$, where $K_M$ is the canonical bundle of $(M,J)$.  

Notice that $n$-planes $\pi$ in $T_pM$ which are not totally real contain a complex line:  a pair $\{X,JX\}$ for some $X\in T_pM\setminus\{0\}$.  We 
call such $n$-planes \emph{partially complex}.  We then say that an $n$-dimensional submanifold is \textit{partially complex} if this condition holds in the strongest sense, \textit{i.e.}~if each of its tangent spaces is partially complex.

Let $TR^+$ denote the Grassmannian bundle of oriented totally real $n$-planes in $TM$ and let 
$\pi\in TR^+(p)$. Let $v_1,\ldots,v_n$ be a positively oriented basis of $\pi$. We can then define $v_j^*\in T_p^*M\otimes\C$ by
$$v_j^*(v_k)=\delta_{jk}\quad\text{and}\quad v_j^*(Jv_k)=i\delta_{jk}.$$
This allows us to define a non-zero form $v_1^*\w\ldots\w v_n^*\in K_M(p)$.

The form we have constructed depends on the choice of basis $v_1,\ldots,v_n$. 
We fix this by assuming  we have a Hermitian metric $h$ on $K_M$, and  then define
$$\sigma[\pi]=\frac{v_1^*\w\ldots\w v_n^*}{|v_1^*\w\ldots\w v_n^*|_h}\in K_M(p).$$
This form has unit norm and is now independent of the choice of basis.

We have thus defined a map between bundles\footnote{In fact, $\sigma$ maps into the unit circle bundle in 
$K_M$.} $\sigma:TR^+\rightarrow K_M$ covering the identity.  We also see that 
the restriction of $\sigma[\pi]$ to $\pi$ is a \emph{real-valued} $n$-form.

Now let $\iota:L\rightarrow M$ be an $n$-dimensional totally real immersion. We can then obtain global versions of the above constructions as follows. 
\paragraph{Canonical bundle over {\boldmath $L$}.}Let $K_M[\iota]$ denote the pullback of $K_M$ over $L$, so the fibre of $K_M[\iota]$ over $p\in L$ is the fibre of $K_M$ over $\iota(p)\in M$. This defines a complex line bundle over $L$ which depends on $\iota$. 

Observe that any complex-valued $n$-form $\alpha$ on $T_pL$ defines a unique $n$-form $\widetilde{\alpha}$ 
on $T_{\iota(p)}M$ by identifying $T_pL$ with its image via $\iota_*$ and setting, \textit{e.g.}, 
$$\widetilde{\alpha}[\iota(p)](J\iota_*(v_1),\dots,J\iota_*(v_n)):=i^n\alpha[p](v_1,\dots,v_n).$$ 
The totally real condition implies that this is an isomorphism:  $K_M[\iota]$ is canonically isomorphic, via $\iota_*$, with the ($\iota$-independent) bundle $\Lambda^n(L,\C):=\Lambda^n(L,\R)\otimes\C$ of complex-valued $n$-forms on $L$.

\paragraph{Canonical section.}Now we use the fact that $L$ is  \emph{oriented}. Hence, $\Lambda^n(L,\R)$ is trivial, so $K_M[\iota]$ also is. We build a global section of $K_M[\iota]$ using our previous linear-algebraic construction: $p\mapsto\Omega_J[\iota](p):=\sigma[\iota_*(T_pL)]$. We call $\Omega_J[\iota]$ the \textit{canonical section} of $K_M[\iota]$. Restricting $\Omega_J[\iota]$ to $\iota_*(T_pL)$ yields a \emph{real-valued} positive $n$-form on $\iota(L)$, thus a volume form $\vol_J[\iota]:=\iota^*(\Omega_J[\iota])$ on $L$: we call it the \textit{$J$-volume form} of $L$, defined by $\iota$.  

When $L$ is compact we obtain a ``canonical volume'' $\int_L \vol_J[\iota]$, for $\iota\in\mathcal{P}$.  One may  see that if $\varphi\in\Diff(L)$ then $\vol_J[\iota\circ\varphi]=\varphi^*(\vol_J[\iota])$, just as for the standard volume form, thus 
$$\int_L\vol_J[\iota\circ\varphi]=\int_L\varphi^*\vol_J[\iota]=\int_L\vol_J[\iota].$$  Hence the canonical volume descends to $\mathcal{T}$  to define the \textit{$J$-volume functional}

\begin{equation*}
 \Vol_J:\TRL\rightarrow \R,\ \ L\mapsto\int_L\vol_J[\iota],
\end{equation*}
where $\iota$ is any parametrization representing the submanifold $L$.

Already in this context it would be possible to study its first variation, thus characterizing its critical points. Using the connection on $\TRL$ one could also define its second variation, studying the stability properties of the critical points. We will do this below, in the presence of additional structure and hypotheses on $M$ which will allow us to determine a useful expression for the first variation and a simplified formula for the second variation.

\textit{Notation.} From now on we will sometimes simplify notation by dropping the reference to the specific immersion used. 
Since this is standard practice in other contexts, \textit{e.g.}~when discussing the standard Riemannian volume, we expect it will not create any confusion.

\section{The {\boldmath $J$}-volume in the Hermitian context}\label{s:Jvol}

Assume now that $(M,J)$ is almost Hermitian, \textit{i.e.}~we have a Riemannian metric $\overline{g}$ on $M$ compatible with $J$, so 
$J$ is an isometry defining a Hermitian metric $h$ on $M$. We also choose a unitary connection $\tnabla$ on $M$. 

Let $L$ be an oriented totally real submanifold of $(M,J)$. In Riemannian geometry it is customary to work with tangential and normal projections $\pi_{\rm T}$, $\pi_{\perp}$ and the Levi-Civita connection $\overline{\nabla}$. This however does not make  use of the totally real condition which implies that, for any $p\in L$, any vector $Z\in T_pM$ can  be written uniquely as $Z=X+JY$ where $X,Y\in T_pL$. This splitting induces projections $\pi_L,\pi_J$ by
 setting $\pi_L(Z)=X$ and $\pi_J(Z)=JY$: these are the natural projections in this context. The following fact is a simple computation.
 
 \begin{lem}\label{proj.lem}
 $\pi_L\circ J=J\circ\pi_J$ and $J\circ\pi_L=\pi_J\circ J$.
\end{lem}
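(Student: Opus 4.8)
The plan is to prove this pointwise, as a statement of linear algebra on each $T_pM$ for $p\in L$, using only the splitting $T_pM=T_pL\oplus J(T_pL)$ furnished by the totally real condition together with $J^2=-\mathrm{id}$. First I would fix $Z\in T_pM$ and write it in the form guaranteed by that condition, $Z=X+JY$ with $X,Y\in T_pL$; by definition $\pi_L(Z)=X$ and $\pi_J(Z)=JY$. Next I would simply compute $JZ=JX+J(JY)=JX-Y$, and observe that since $-Y\in T_pL$ and $JX\in J(T_pL)$ this displays the (unique) $T_pL\oplus J(T_pL)$-decomposition of $JZ$; hence $\pi_L(JZ)=-Y$ and $\pi_J(JZ)=JX$.

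Finally I would read off the two identities by comparison: $\pi_L(JZ)=-Y=J(JY)=J\pi_J(Z)$ gives $\pi_L\circ J=J\circ\pi_J$, and $\pi_J(JZ)=JX=J\pi_L(Z)$ gives $J\circ\pi_L=\pi_J\circ J$. One could also remark that, since $\pi_L+\pi_J=\mathrm{id}$ as operators on $T_pM$, each of the two displayed identities is equivalent to the single relation $\pi_L\circ J+J\circ\pi_L=J$, so establishing one of them immediately yields the other.

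I do not expect any genuine obstacle here: the computation is one line, and the only point deserving a moment's care is checking that $JX-Y$ really is the decomposition of $JZ$ relative to $T_pL\oplus J(T_pL)$ — that is, that the two summands lie in the correct factors — which is immediate from $X,Y\in T_pL$. There is nothing global or analytic to address: once the identities hold on each tangent space they hold as identities of bundle maps over $L$, with smoothness of $\pi_L$ and $\pi_J$ inherited from that of $J$ and of the splitting.
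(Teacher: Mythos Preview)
Your proposal is correct and is precisely the ``simple computation'' the paper alludes to without writing out; the paper gives no proof beyond that remark, so your pointwise linear-algebra argument using the decomposition $Z=X+JY$ and $J^2=-\mathrm{id}$ is exactly what is intended.
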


The structures on $M$ induce structures $h$, $\tnabla$ on $K_M$, which we can use to define the $J$-volume form on $L$. 

Notice that, in contrast to Section \ref{s:canonical_data} where we  only had a complex structure, we now have the 2-form 
$\overline{\omega}(\cdot,\cdot)=\overline{g}(J\cdot,\cdot)$. In this section we can thus also discuss Lagrangian submanifolds, defined by the condition $\iota^*\bar{\omega}=0$.

\subsection{The {\boldmath $J$}-volume versus the Riemannian volume}\label{ss:vol_comparison}

In the almost Hermitian context, given an immersion $\iota$, we can define the usual Riemannian volume form $\vol_g$, using the induced metric $g$. It is useful to compare this with the $J$-volume form, cf.~also \cite{LP}.

Let $e_1,\dots,e_n$ be a positive orthonormal basis  for $\pi$ and set $h_{ij}=h(e_i,e_j)$, where $h$ is the ambient Hermitian metric.  We wish to calculate $|e_1^*\w\ldots\w e_n^*|_h$.  
Observe that  $h(.,e_j)=h_{kj}e_k^*$ since $$h(e_i,e_j)=h_{ij}=h_{kj}e_k^*(e_i)$$ and $$h(Je_i,e_j)=ih_{ij}=ih_{kj}e_k^*(e_i)=h_{kj}e_k^*(Je_i).$$
Thus 
$$h(.,e_1)\w\ldots\w h(.,e_n)=({\det}_{\C}h_{ij}) e_1^*\w\ldots\w e_n^*.$$
Hence
$$|e_1^*\w\ldots\w e_n^*|_h=({\det}_{\C}h_{ij})^{-1}|h(\cdot,e_1)\w\ldots\w h(\cdot,e_n)|_h.$$
We now notice that 
$$|h(\cdot,e_1)\w\ldots\w h(\cdot,e_n)|_h^2={\det}_{\C}h_{ij}$$
so
\begin{equation}\label{h.mod.eq}
|e_1^*\w\ldots\w e_n^*|_h=({\det}_{\C}h_{ij})^{-\frac{1}{2}}.
\end{equation}
We therefore find that
\begin{align*}
 \vol_J&=\frac{e_1^*\w\ldots\w e_n^*}{|e_1^*\w\ldots\w e_n^*|_h}{|_\pi}=({\det}_{\C}h_{ij})^{1/2}\vol_g.
\end{align*}
We can now obtain a well-defined function 
\begin{equation*}\label{rhoJ.eq.1}
\rho_J:TR^+\rightarrow \R,\ \ \rho_J(\pi):=\vol_J(e_1,\dots,e_n)=({\det}_{\C}h_{ij})^{1/2},
\end{equation*}
because this quantity is independent of the orthonormal basis $e_1,\ldots,e_n$. 

Restricting $\rho_J$  to an oriented totally real submanifold $L$, 
we obtain the identity: $\vol_J=\rho_J \vol_g$.  

Notice that $h=\overline{g}-i\overline{\omega}$ and that, using the obvious notation for the components of $\overline{g}$ and $\overline{\omega}$ with respect to $e_1,\ldots,e_n$,
\begin{align*}
{\det}_{\C}h_{ij}&=\sqrt{\det\left(\begin{array}{cc}\overline{g}_{ij} & \overline{\omega}_{ij}\\ -\overline{\omega}_{ij} & \overline{g}_{ij}\end{array}\right)}.
\end{align*}
We see that 
$$\overline{\omega}_{ij}=\overline{g}(Je_i,e_j)\quad\text{and}\quad-\overline{\omega}_{ij}=\overline{g}(e_i,Je_j).$$  We deduce that $\det_{\C}h_{ij}=\sqrt{\det(\overline{g}_{ab})}$ where $\overline{g}_{ab}$ is the 
matrix of $\overline{g}$ with respect to the basis $\{e_1,\ldots,e_n,Je_1,\ldots,Je_n\}$.  Therefore
$${\det}_{\C}h_{ij}=\vol_{\overline{g}}(e_1,\ldots,e_n,Je_1,\ldots,Je_n).$$

We deduce a second expression for $\rho_J$: 
\begin{equation}\label{rhoJ.eq.2}
\rho_J(\pi)=\sqrt{\vol_{\overline{g}} (e_1,\dots,e_n,Je_1,\dots,Je_n)}.
\end{equation}
Hence we see that $\rho_J(\pi)\leq 1$ with equality if and only if $\pi$ is Lagrangian. 

More generally, given any basis $v_1,\dots,v_n$ for $\pi$, we can write
\begin{equation}\label{rhoJ.eq.3}
\rho_J(\pi)=\frac{\sqrt{\vol_{\overline{g}} (v_1,\dots,v_n,Jv_1,\dots,Jv_n)}}{|v_1\wedge\dots\wedge v_n|_{\bar g}}.
\end{equation}

We can set $\rho_J(\pi)=0$ when $\pi$ is partially complex and extend the map $\sigma$ to all $n$-planes, just setting $\sigma[\pi]=0$ if $\pi$ is partially complex. 
This is particularly reasonable in this almost Hermitian setting, where there is a natural topology on the Grassmannian of $n$-planes: this choice of extension of $\sigma$ would be justified by the fact that it is the unique one which preserves the continuity of $\sigma$.

Applying these observation to submanifolds we deduce the following.

\begin{lem}\label{Lag.Jvol.lem}
For any compact oriented $n$-dimensional submanifold $L$ in an almost Hermitian
manifold $(M,J,\overline{g})$, we have $\Vol_J(L)\leq \Vol_g(L)$ with equality if and only if $L$ is Lagrangian. In particular, $\Vol_J$ and $\Vol_g$ coincide to first order on Lagrangians.
\end{lem}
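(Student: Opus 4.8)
The plan is to derive the statement directly from the pointwise linear-algebraic inequality $\rho_J(\pi)\leq 1$, with equality if and only if $\pi$ is Lagrangian, which was established just above for any oriented totally real $n$-plane $\pi$. First I would recall that for a totally real submanifold $L$ we have the identity $\vol_J = \rho_J\,\vol_g$ as volume forms on $L$, where $\rho_J$ is the function on $TR^+$ computed above. Integrating over the compact manifold $L$ and using $0\leq\rho_J\leq 1$ pointwise gives $\Vol_J(L)=\int_L \rho_J\,\vol_g \leq \int_L \vol_g = \Vol_g(L)$ immediately.

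For the equality case, since $\vol_g$ is a strictly positive volume form and $\rho_J\leq 1$, the integrals agree if and only if $\rho_J\equiv 1$ on all of $L$; by the pointwise characterization this happens exactly when every tangent plane $T_pL$ is Lagrangian, i.e. when $\iota^*\bar\omega=0$. The only subtlety here is that the statement is phrased for an arbitrary compact oriented $n$-dimensional submanifold $L$, not just a totally real one, so I would first address the case where $L$ is not everywhere totally real: at partially complex points we have set $\rho_J(\pi)=0$ (and $\sigma[\pi]=0$), so $\vol_J$ vanishes there, while $\vol_g$ remains strictly positive, and the strict inequality $\Vol_J(L)<\Vol_g(L)$ holds unless $L$ is totally real. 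A partially complex plane can never be Lagrangian — it contains a complex line $\{X,JX\}$, and $\bar\omega(X,JX)=\bar g(JX,JX)=|X|^2\neq 0$ — so such an $L$ is automatically excluded from the equality case, and the statement is consistent. Thus the equality case forces $L$ to be totally real and then Lagrangian.

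Finally, for the last sentence: suppose $L$ is Lagrangian and minimizes $\Vol_J$ among all competitors (in $\TRL$, or more generally). For any competitor $L'$ we have $\Vol_g(L')\geq \Vol_J(L') \geq \Vol_J(L) = \Vol_g(L)$, where the first inequality is the comparison just proved, the second is the minimizing hypothesis, and the last equality is the equality case applied to the Lagrangian $L$. Hence $L$ minimizes $\Vol_g$ as well.

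I do not expect any serious obstacle: the entire argument is a one-line integration of an already-established pointwise inequality together with the strict positivity of the Riemannian volume form. The only point requiring a word of care is making sure the equality discussion handles the non-totally-real points correctly, which the extension $\rho_J=0$ on partially complex planes (and the observation that partially complex planes are never Lagrangian) resolves cleanly.
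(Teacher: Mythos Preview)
Your proposal is correct and follows the same approach as the paper: the chain of inequalities $\Vol_g(L')\geq\Vol_J(L')\geq \Vol_J(L)=\Vol_g(L)$ is exactly what the paper writes, and the first part is obtained (as in the paper) by integrating the pointwise inequality $\rho_J\leq 1$ established just before the lemma. If anything you are more explicit than the paper, which does not spell out the discussion of partially complex points in the equality case.
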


\begin{proof}
The first statement follows from (\ref{rhoJ.eq.2}). To prove the second, let $L_t$ be a 1-parameter family of totally real submanifolds such that $L_0$ is Lagrangian. Set $f(t):=\Vol_J(L_t)$ and $g(t):=\Vol_g(L_t)$. Then $f\leq g$ so $g-f\geq 0$. Equality holds when $t=0$: this is a minimum point, so it is necessarily critical. It follows that $f'(0)=g'(0)$. Since this holds for any 1-parameter family, we obtain the desired conclusion.  
\end{proof}

\subsection{First variation of the {\boldmath $J$}-volume}

\begin{prop}\label{volJ.prop}
Let $\iota_t:L\rightarrow L_t\subseteq M$ be a one-parameter family of totally real submanifolds in an almost Hermitian manifold and let 
$\frac{\partial}{\partial t}\iota_t|_{t=0}=Z$. 

Set $\iota=\iota_0$ and $g=\iota^*\bar g$. If $Z=X+JY$ for tangent vectors $X,Y$ then 
\begin{align*}
\frac{\partial}{\partial t}\vol_J[\iota_t]|_{t=0}&=\overline{g}\big(\pi_L\big(\tnabla_{e_i}Z+\widetilde{T}(Z,e_i)\big),e_i\big)\vol_J[\iota]\\
&=\Div(\rho_JX)\vol_g
+\overline{g}\big(\pi_L\big(\tnabla_{e_i}JY+\widetilde{T}(JY,e_i)\big),e_i\big)\vol_J[\iota],
\end{align*}
where at $p\in L$ we have that $e_1,\ldots,e_n$ is a $g$-orthonormal basis for $T_pL$.
\end{prop}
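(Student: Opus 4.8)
The plan is to compute the $t$-derivative of $\vol_J[\iota_t]$ at $t=0$ directly from the definition $\vol_J[\iota]=\iota^*\Omega_J[\iota]$, where $\Omega_J[\iota](p)=\sigma[\iota_*(T_pL)]$ is the unit-norm canonical section of $K_M[\iota]$. First I would fix $p\in L$ and a $g$-orthonormal basis $e_1,\dots,e_n$ of $T_pL$ at $t=0$, extended to a local frame; writing $\vol_J[\iota_t]=\rho_J[\iota_t]\,\vol_{g_t}$ is tempting but it is cleaner to differentiate the $n$-form $\Omega_J$ and its pullback separately. The derivative has two contributions: (i) the variation of the plane $\iota_{t*}(T_pL)$ inside $TM$, which moves $\sigma$ along its defining curve in the unit circle bundle of $K_M$, and (ii) the variation of the vectors $\iota_{t*}(e_j)$ on which we evaluate the form. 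A convenient way to organize both is to use the unitary connection $\tnabla$ on $K_M$ (induced from the one on $M$): since $\sigma[\pi]$ has constant $h$-norm, $\tnabla_Z\Omega_J$ is purely imaginary multiple of $\Omega_J$ plus terms that, upon restriction to $\pi$ and taking the real part, produce exactly the curvature/torsion expression. The evaluation terms contribute $\sum_i \Omega_J(\dots, \tnabla_{e_i}(\text{variation}),\dots)$; commuting the $t$-derivative past $\iota_{t*}$ introduces the torsion $\widetilde T(Z,e_i)$ of $\tnabla$ via $\tnabla_{\partial_t}\iota_{t*}e_i = \tnabla_{e_i}Z + \widetilde T(Z,e_i)$ (using that $[\partial_t, e_i]=0$).

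The key algebraic step is then the following local-linear-algebra lemma: for a totally real $n$-plane $\pi$ with positively oriented $g$-orthonormal basis $e_1,\dots,e_n$, and for any $W\in T_pM$,
\begin{equation*}
\frac{d}{dt}\Big|_{t=0}\sigma[\pi_t]\big|_\pi\big(e_1,\dots,e_n\big) \;=\; \overline g\big(\pi_L(W),e_i\big)\,\sigma[\pi]\big|_\pi(e_1,\dots,e_n),
\end{equation*}
when $\pi_t$ is the plane obtained by moving each $e_j$ in the direction $W_j$ with $\sum_j \overline g(W_j, e_j)$-type bookkeeping; more precisely the coefficient that survives, after using that $\sigma$ has unit norm and restricts to a real form, is the trace $\sum_i \overline g(\pi_L(\tnabla_{e_i}Z + \widetilde T(Z,e_i)), e_i)$. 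The mechanism is that $K_M$ is a line bundle, so differentiating the unit section $\sigma$ picks out a scalar; the real part of that scalar is what multiplies $\vol_J$, and a short computation with the $v_j^*$'s — using $v_j^*(v_k)=\delta_{jk}$, $v_j^*(Jv_k)=i\delta_{jk}$ and Lemma \ref{proj.lem} — identifies the real part as the stated divergence-type trace. Here the appearance of $\pi_L$ (rather than $\pi_{\rm T}$) is essential and is precisely where the totally real structure, not just the Riemannian structure, enters; the imaginary part drops because we are evaluating a real $n$-form on the real tangent space.

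For the second equality I would split $Z = X + JY$ with $X,Y$ tangent and treat the two pieces separately using linearity of the trace. For the $X$-piece, the claim is that $\overline g(\pi_L(\tnabla_{e_i}X + \widetilde T(X,e_i)), e_i)\,\vol_J = \Div(\rho_J X)\,\vol_g$. The natural route is to recall $\vol_J = \rho_J \vol_g$ from Section \ref{ss:vol_comparison}, and to recognize that a tangential deformation $X$ of a submanifold changes any volume-type form by the Lie derivative, i.e.\ by $\mathcal L_X$ applied to the density; for the ordinary volume this gives $\Div_g(X)\vol_g$, and carrying the factor $\rho_J$ along (which itself varies, since $\pi$ rotates within $TM$ even under a tangential push when $Z=X$ has no normal component — wait, a purely tangential $X$ reparametrizes $L$ and does change $\rho_J$ only through the motion of the base point $\iota(p)$ in $M$) yields the conservation-law form $\Div(\rho_J X)$. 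I would make this precise by noting that reparametrization invariance $\vol_J[\iota\circ\varphi]=\varphi^*\vol_J[\iota]$ forces the $X$-contribution to be an exact divergence, and matching the density identifies it as $\Div(\rho_J X)\vol_g$; the remaining $JY$-piece is left in the stated form because it genuinely involves the extrinsic second-order data.

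The main obstacle I anticipate is the bookkeeping in the linear-algebra lemma: correctly differentiating $\sigma[\pi_t]$, which is a \emph{normalized} section, so that the normalization derivative (the variation of $|v_1^*\wedge\cdots\wedge v_n^*|_h$) cancels against part of the unnormalized derivative, leaving exactly the real trace and no stray $\rho_J$-derivative terms. Getting the torsion term $\widetilde T(Z,e_i)$ to appear with the right sign — which hinges on the convention $\tnabla_{\partial_t}\iota_{t*}e_i - \tnabla_{e_i}\iota_{t*}\partial_t = \widetilde T(\partial_t\iota, e_i)$ together with $[\partial_t,e_i]=0$ — is the delicate point, as is confirming that only $\pi_L$ (and not the full vector or $\pi_J$) survives after restricting the $K_M$-valued form to the real plane $\pi$ and extracting the real part. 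Everything else is a routine though lengthy unwinding of the definitions.
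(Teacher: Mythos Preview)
Your approach via the canonical section $\Omega_J=\sigma[\pi]$ and the induced unitary connection on $K_M$ is valid and will produce the formula, but it is a genuinely different route from the paper's. The paper sidesteps precisely the normalization bookkeeping you flag as the main obstacle: it works with the concrete scalar
\[
\rho_J(t)=\sqrt{\vol_{\overline g}\big((\iota_t)_*e_1,\ldots,(\iota_t)_*e_n,J(\iota_t)_*e_1,\ldots,J(\iota_t)_*e_n\big)}
\]
from Section~\ref{ss:vol_comparison}, so that $\vol_J[\iota_t]=\rho_J(t)\vol_g$ with $\vol_g$ fixed (coming from $g=\iota_0^*\overline g$). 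Differentiating the square root directly yields two blocks of terms, from the $e_i$ slots and the $Je_i$ slots, and Lemma~\ref{proj.lem} shows each block equals $\sum_i\overline g(\pi_L\tnabla_Z e_i,e_i)\rho_J^2$; dividing by $2\rho_J$ gives the trace, and $[Z,e_i]=0$ converts $\tnabla_Z e_i$ to $\tnabla_{e_i}Z+\widetilde T(Z,e_i)$. In your approach the same trace emerges as the real part of $\sum_i\Omega_J(e_1,\ldots,\tnabla_Z e_i,\ldots,e_n)$, with the imaginary part cancelled by the purely imaginary $\tnabla_Z\Omega_J$ contribution (the total being real); this is conceptually cleaner but needs exactly the cancellation you anticipated. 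Two small corrections: no curvature term arises in a first variation, so ``curvature/torsion expression'' should read ``torsion''; and your linear-algebra lemma as stated is garbled (a single $W$, then ``each $W_j$'')---the clean statement is just that the real part of $\sum_i e_i^*(\tnabla_Z e_i)$ equals $\sum_i\overline g(\pi_L\tnabla_Z e_i,e_i)$. For the tangential piece $Z=X$, you and the paper agree: $\mathcal L_X\vol_J=\d(\rho_J X\lrcorner\vol_g)$ via Cartan's formula.
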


\noindent Notice that the quantities which appear in the above formulae are invariant under changes of orthonormal basis and so are globally defined.

\begin{proof} Let $p\in L$, let $x_1,\ldots,x_n$ be normal coordinates at $p$ with respect to $g$ and let $e_i=\frac{\partial}{\partial x_i}$: this defines an orthonormal basis for $T_pL$.
Since we have $L_t=\iota_t(L)=\iota(L,t)$ where $\iota(x,t):=\iota_t(x)$, we may consider coordinates $(x_1,\ldots,x_n,t)$ near $(p,0)$ in $L\times(-\epsilon,\epsilon)$, 
so then $\iota_*(\frac{\partial}{\partial t})=Z$ at $t=0$.
Notice that since $(x_1,\ldots,x_n,t)$ is a coordinate system on $L\times(-\epsilon,\epsilon)$ we see that $[Z,e_i]=0$. 

Recall that the $J$-volume form is given at $p$ by
$$\vol_J[\iota]=\rho_J(\pi)\vol_g=\sqrt{\vol_{\overline{g}}(e_1,\ldots,e_n,Je_1,\ldots,Je_n)}\vol_g,$$
where $\pi=T_{\iota(p)}L$. We wish to calculate
$$\frac{\partial}{\partial t}\vol_J[\iota_t]|_{t=0}=\frac{\partial}{\partial t}\sqrt{\vol_{\overline{g}}(e_1(t),\ldots,e_n(t),Je_1(t),\ldots,Je_n(t))}\vol_{g_t}|_{t=0}$$
 at $p$, where $e_1(t),\ldots,e_n(t)$ is orthonormal at $\iota_t(p)\in L_t$ with respect to $g_t=\iota_t^*\bar g$ and $e_i(0)=e_i$ for all $i$.  This computation can be simplified by noticing that
$$\vol_J[\iota_t]=\rho_J(t)\vol_g,$$
where $\rho_J(t)=\sqrt{\vol_{\overline{g}}((\iota_t)_*e_1,\ldots,(\iota_t)_*e_n,J(\iota_t)_*e_1,\ldots,J(\iota_t)_*e_n)}$. Indeed, setting $\pi_t=T_{\iota_t(p)}L_t$ and using the formulae \eqref{rhoJ.eq.3} for $\rho_J$, we find
\begin{align*}
\vol_J[\iota_t]&(e_1,\dots,e_n)=\rho_J(\pi_t) \vol_{g_t}(e_1,\dots,e_n)\\
&=\frac{\sqrt{\vol_{\overline{g}} ((\iota_t)_*e_1,\dots,(\iota_t)_*e_n,J(\iota_t)_*e_1,\dots,J(\iota_t)_*e_n)}}{|(\iota_t)_*e_1\wedge\dots\wedge (\iota_t)_*e_n|_{\bar g}}\cdot|e_1\wedge\dots\wedge e_n|_{\iota_t^*\bar g}\\
&=\sqrt{\vol_{\overline{g}}((\iota_t)_*e_1,\ldots,(\iota_t)_*e_n,J(\iota_t)_*e_1,\ldots,J(\iota_t)_*e_n)}.
\end{align*}
It therefore suffices to compute
\begin{align}
& \frac{\partial}{\partial t}\rho_J(t)=\frac{\sum_{i=1}^n
\vol_{\overline{g}}((\iota_t)_*e_1,\ldots,\frac{\partial}{\partial t}(\iota_t)_*e_i,\ldots,(\iota_t)_*e_n,J(\iota_t)_*e_1,\ldots,J(\iota_t)_*e_n)}
{2\rho_J(t)}\nonumber\\
&+\frac{\sum_{i=1}^n\vol_{\overline{g}}((\iota_t)_*e_1,\ldots,(\iota_t)_*e_n,J(\iota_t)_*e_1,\ldots,\frac{\partial}{\partial t} J(\iota_t)_*e_i,\ldots,J(\iota_t)_*e_n)}
{2\rho_J(t)}.\label{diff.rhoJt.eq}
\end{align}
Setting $t=0$ so that $\frac{\partial}{\partial t}|_{t=0}=\tnabla_Z$  and the denominator becomes $2\rho_J$ we have
\begin{align*}
 \frac{\partial}{\partial t}\rho_J(t)|_{t=0}&=\frac{\sum_{i=1}^n
\vol_{\overline{g}}(e_1,\ldots,\tnabla_Ze_i,\ldots,e_n,Je_1,\ldots,Je_n)}
{2\rho_J}\\
&+\frac{\sum_{i=1}^n\vol_{\overline{g}}(e_1,\ldots,e_n,Je_1,\ldots,\tnabla_Z Je_i,\ldots,Je_n)}
{2\rho_J}.
\end{align*}
Notice that
\begin{align*}
\sum_{i=1}^n\vol_{\overline{g}}(e_1,\ldots,&\tnabla_Ze_i,\ldots,e_n,Je_1,\ldots,Je_n)\\
&=\sum_{i=1}^n\vol_{\overline{g}}(e_1,\ldots,\pi_L\tnabla_Ze_i,\ldots,e_n,Je_1,\ldots,Je_n)\displaybreak[0]\\
&=\sum_{i=1}^n\overline{g}(\pi_L\tnabla_Ze_i,e_i)\vol_{\overline{g}}(e_1,\ldots,e_n,Je_1,\ldots,Je_n)\displaybreak[0]\\
&=\sum_{i=1}^n\overline{g}(\pi_L\tnabla_Ze_i,e_i)\rho_J^2.
\end{align*}
Similarly, using Lemma \ref{proj.lem}
\begin{align*}
\sum_{i=1}^n\vol_{\overline{g}}(e_1,\ldots,&e_n,Je_1,\ldots,\tnabla_ZJe_i,\ldots,Je_n)\\[-8pt]&=
\sum_{i=1}^n\vol_{\overline{g}}(e_1,\ldots,e_n,Je_1,\ldots,\pi_JJ\tnabla_Ze_i,\ldots,Je_n)\displaybreak[0]\\
&=\sum_{i=1}^n\overline{g}(\pi_JJ\tnabla_Ze_i,Je_i)\vol_{\overline{g}}(e_1,\ldots,e_i,\ldots,e_n,Je_1,\ldots,Je_n)\displaybreak[0]\\
&=\sum_{i=1}^n\overline{g}(\pi_L\tnabla_Ze_i,e_i)\rho_J^2.
\end{align*}
Thus,
$$\frac{\partial}{\partial t}\rho_J(t)|_{t=0}=\overline{g}(\pi_L\tnabla_Ze_i,e_i)\rho_J.$$
Since $[Z,e_i]=0$, we have that $$\tnabla_Ze_i=\tnabla_{e_i}Z+\tilde{T}(Z,e_i)+[Z,e_i]=\tnabla_{e_i}Z+\widetilde{T}(Z,e_i),$$
where $\widetilde{T}$ is the torsion of $\tnabla$. The first part of the result follows.

We see that for $Z=X$ tangential
$$\frac{\partial}{\partial t}\vol_J[\iota_t]|_{t=0}=\mathcal{L}_X\vol_J[\iota]=\d(\rho_JX\lrcorner\vol_g),$$
using Cartan's formula, which gives the result.
\end{proof}

Now suppose that $L$ is compact (without boundary).
We can then define the $J$-volume of $L$ as before, using \eqref{rhoJ.eq.2}, by:
$$\Vol_J(L)=\int_L\vol_J=\int_L\rho_J\vol_g=\int_L\sqrt{\vol_{\overline{g}}(e_1,\ldots,e_n,Je_1,\ldots,Je_n)}\vol_g,$$
where $e_1,\ldots,e_n$ is an orthonormal basis for each tangent space.

We now compute the first variation of $\Vol_J$. By Proposition \ref{volJ.prop} if $Z=X+JY$ for $X,Y$ tangential then
\begin{equation}\label{first.var.eq.2}
\frac{\partial}{\partial t}\Vol_J(L_t)|_{t=0}=\int_L
\overline{g}\big(\pi_L\big(\tnabla_{e_i}JY+\widetilde{T}(JY,e_i)\big),e_i\big)\vol_J
\end{equation}
 since 
$\int_L\Div(\rho_JX)\vol_g=0$
by Stokes' Theorem.  Hence it is enough to restrict to $Z=JY$. Our final result is phrased in terms of a ``$J$-mean curvature vector field'' defined as follows.

We use the metric $\bar{g}$ to define the transposed operators
$$\pi_J^t:T_pM\rightarrow (T_pL)^\perp, \ \ \pi_L^t:T_pM\rightarrow (J(T_pL))^\perp.$$
Observe that $(J(T_pL))^\perp=J(T_pL)^{\perp}$ since $X\in (J(T_pL))^{\perp}$ if and only if for all $Y\in T_pL$,
$$\overline{g}(Y,JX)=-\overline{g}(JY,X)=0,$$
which means $JX\in (T_pL)^{\perp}$ and thus $X\in J(T_pL)^{\perp}$.  
Then, using the  tangential projection $\pi_T$ defined using $\overline{g}$, one may check that
$$\pi_T J\tnabla\pi_L^t:T_pL\times T_pL\rightarrow T_pL$$
is $C^{\infty}$-bilinear on its domain, so it is a tensor and its trace is a well-defined vector on $L$. We now set 
\begin{equation}\label{HJ.eq}
H_J:=-J(\mbox{tr}_L(\pi_T J\tnabla\pi_L^t)).
\end{equation}
This is a well-defined vector field on $L$, taking values in the bundle $J(TL)$. We refer to \cite{LP} for an alternative expression for $H_J$.

\begin{prop}\label{first.var.prop}
Let $\iota_t:L\rightarrow L_t\subseteq M$ be compact totally real submanifolds in an almost Hermitian manifold and let 
$\frac{\partial}{\partial t}\iota_t|_{t=0}=X+JY$ for $X,Y$ tangential.  Then 
$$\frac{\partial}{\partial t}\Vol_J(L_t)|_{t=0}=
-\int_L\overline{g}(JY,H_J+S_J)\vol_J$$
where, given $p\in L$ and an orthonormal basis $e_1,\ldots,e_n$ for $T_pL$ we have $H_J$ given by \eq{HJ.eq} and 
\begin{equation}\label{SJ.eq}
S_J=-\overline{g}(\pi_L\widetilde{T}(Je_j,e_i),e_i)Je_j.
\end{equation}
\end{prop}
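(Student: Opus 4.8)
The strategy is to take the first formula from Proposition \ref{volJ.prop} as the starting point and manipulate the term $\overline{g}\big(\pi_L(\tnabla_{e_i}JY+\widetilde{T}(JY,e_i)),e_i\big)$ into a divergence plus a pointwise pairing of $JY$ against a vector field. Having already observed that the $\Div(\rho_J X)$ term integrates to zero, it suffices to treat $Z=JY$; I would split the integrand into the ``connection part'' $\overline{g}(\pi_L\tnabla_{e_i}(JY),e_i)\vol_J$ and the ``torsion part'' $\overline{g}(\pi_L\widetilde{T}(JY,e_i),e_i)\vol_J$ and handle them separately. The torsion part is the easy one: at a point, with an orthonormal basis $e_i$, expand $JY=\overline{g}(JY,Je_j)Je_j$ using that $\{Je_j\}$ is orthonormal and $JY\in J(TL)$, so that $\overline{g}(\pi_L\widetilde{T}(JY,e_i),e_i)=\overline{g}(JY,Je_j)\,\overline{g}(\pi_L\widetilde{T}(Je_j,e_i),e_i)=-\overline{g}(JY, S_J)$ by the definition \eqref{SJ.eq} of $S_J$. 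This identifies the torsion contribution with $-\int_L\overline{g}(JY,S_J)\vol_J$ directly, with no integration by parts needed.

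For the connection part, the plan is to integrate by parts. I would write $\overline{g}(\pi_L\tnabla_{e_i}(JY),e_i)=\overline{g}(\tnabla_{e_i}(JY),e_i)$, since $e_i\in T_pL$ and $\pi_L$ is the identity there; then move the derivative off $JY$ by differentiating the whole pairing: $e_i\big(\overline{g}(JY,e_i)\big)\vol_g=\overline{g}(\tnabla_{e_i}(JY),e_i)\vol_g+\overline{g}(JY,\tnabla_{e_i}e_i)\vol_g$, using that $\tnabla$ is unitary hence metric-compatible. The left-hand side, after accounting for the volume factor $\rho_J$, should assemble into an exact form (a divergence with respect to $\vol_g$ of a tangential vector field built from $JY$), which integrates to zero over the compact $L$; one must be careful here because $\tnabla$ has torsion, so the relation between $\overline{g}(\tnabla_{e_i}W,e_i)\vol_g$ and $\d(W\lrcorner\vol_g)$ for a tangent vector $W$ on $L$ picks up torsion terms --- but these are intrinsic-to-$L$ torsion terms and I expect them either to vanish upon tracing or to be absorbable. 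The surviving term is $-\int_L\overline{g}(JY,\tnabla_{e_i}e_i)\vol_J$, and the content of the proposition is that the tangential-and-$J$-image part of $\tnabla_{e_i}e_i$, suitably traced and projected, is exactly $H_J$ as defined in \eqref{HJ.eq}. This is where the transposed projections $\pi_L^t$, $\pi_J^t$ enter: one needs to verify that pairing $JY$ (which lies in $J(TL)$) against $\tnabla_{e_i}e_i$ only sees the $J(TL)$-component, that applying $J$ converts this into the $\pi_T J\tnabla \pi_L^t$ tensor traced over $T_pL$, and that the signs match $H_J=-J(\tr_L(\pi_T J\tnabla\pi_L^t))$.

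The main obstacle will be the bookkeeping in this last step: carefully tracking which projection ($\pi_L$, $\pi_J$, $\pi_T$, $\pi_\perp$, or their transposes) acts where, and confirming that the naive ``integrate by parts and read off the second-fundamental-form-like term'' really produces the tensorial object $H_J$ rather than something basis-dependent or differing by a torsion correction. Lemma \ref{proj.lem} (the commutation of $J$ with $\pi_L,\pi_J$) and the identity $(J(T_pL))^\perp=J((T_pL)^\perp)$ established just before the proposition will be the key tools for converting between the ``$L$-adapted'' projections and the ``orthogonal'' projections. I would expect a short computation to show $\overline{g}(\pi_L\tnabla_{e_i}(JY),e_i)=\overline{g}(\tnabla_{e_i}(JY),e_i)$ and then, modulo the exact term, $=-\overline{g}(JY,\tnabla_{e_i}e_i)$, and finally that $\tr_L\overline{g}(JY,\tnabla_\cdot\,\cdot)=\overline{g}(JY, -H_J)$ after inserting the transposed-projection identities; combined with the torsion computation this yields the stated formula $\frac{\partial}{\partial t}\Vol_J(L_t)|_{t=0}=-\int_L\overline{g}(JY,H_J+S_J)\vol_J$.
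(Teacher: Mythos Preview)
Your treatment of the torsion term is correct and matches the paper: expanding $JY=\overline{g}(JY,Je_j)Je_j$ in the orthonormal basis $Je_j$ of $J(T_pL)$ immediately gives the $-\overline{g}(JY,S_J)$ contribution.

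However, the connection part contains a genuine error. You assert that
\[
\overline{g}(\pi_L\tnabla_{e_i}(JY),e_i)=\overline{g}(\tnabla_{e_i}(JY),e_i)
\]
on the grounds that ``$\pi_L$ is the identity on $T_pL$''. That $\pi_L|_{TL}=\mathrm{id}$ is true, but it does \emph{not} imply $\overline{g}(\pi_L W,e_i)=\overline{g}(W,e_i)$ for general $W$: this would require $\pi_L$ to be self-adjoint, i.e.\ an \emph{orthogonal} projection. But $\pi_L$ is the projection onto $TL$ along $J(TL)$, and $TL$ and $J(TL)$ are orthogonal only when $L$ is Lagrangian. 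In general $\pi_L^t e_i\neq e_i$; as the paper notes just before the statement, $\pi_L^t$ maps $T_pM$ into $(J(T_pL))^\perp$. Consequently your subsequent integration by parts against $e_i$ produces $-\overline{g}(JY,\tnabla_{e_i}e_i)$, which is not the quantity appearing in the definition \eqref{HJ.eq} of $H_J$ (that definition involves $\tnabla_{e_i}\pi_L^t e_i$), and the ``divergence'' term $e_i\big(\overline{g}(JY,e_i)\big)$ is nonzero and does not obviously assemble into an exact form once the weight $\rho_J$ and the torsion of $\tnabla$ are taken into account.

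The paper's argument avoids all of this. One writes $\overline{g}(\pi_L\tnabla_{e_i}(JY),e_i)=\overline{g}(\tnabla_{e_i}(JY),\pi_L^t e_i)$ and then observes that, since $JY\in J(TL)$ and $\pi_L^t e_i\in (J(TL))^\perp$, the function $\overline{g}(JY,\pi_L^t e_i)$ vanishes identically. Metric compatibility of $\tnabla$ then gives, \emph{pointwise} and with no Stokes argument,
\[
\overline{g}(\tnabla_{e_i}(JY),\pi_L^t e_i)=-\overline{g}(JY,\tnabla_{e_i}\pi_L^t e_i),
\]
which is exactly the trace entering $H_J$. Finally one uses the elementary identity $\overline{g}(JY,W)=\overline{g}(JY,-J\pi_T JW)$ (valid for any $W$, since $Y\in TL$) to rewrite this as $-\overline{g}(JY,H_J)$. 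So the step you flagged as ``the main obstacle'' is in fact where the transposed projection does the real work, and it should be introduced \emph{before} you move the derivative, not after.
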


\begin{proof}  
Using the definition of $H_J$ in \eqref{HJ.eq}, we calculate that
\begin{align*}
\overline{g}\big(\pi_L\big(\tnabla_{e_i}JY&+\widetilde{T}(JY,e_i)\big),e_i\big)\\
&=\overline{g}(\tnabla_{e_i}{JY}+\widetilde{T}({JY},e_i),\pi_L^{\rm t}e_i)\\
&=-\overline{g}({JY},\tnabla_{e_i}\pi_L^{\rm t}e_i)+\overline{g}(\widetilde{T}(\overline{g}(JY,Je_j)Je_j,e_i),\pi_L^{\rm t}e_i)\\
&=-\overline{g}({JY},\tnabla_{e_i}\pi_L^{\rm t}e_i)+\overline{g}(JY,Je_j)\overline{g}(\widetilde{T}(Je_j,e_i),\pi_L^{\rm t}e_i)\\
&=-\overline{g}({JY},\tnabla_{e_i}\pi_L^{\rm t}e_i)+\overline{g}({JY},\overline{g}\big(\widetilde{T}(Je_j,e_i),\pi_L^{\rm t} e_i)Je_j)\\
&=-\overline{g}\Big(JY,-J\pi_{\rm T}J\big(\tnabla_{e_i}\pi_L^{\rm t}e_i-\overline{g}\big(\pi_L\widetilde{T}(Je_j,e_i),e_i)Je_j\big)\Big)\\
&=-\overline{g}\Big(JY,H_J-\overline{g}\big(\pi_L\widetilde{T}(Je_j,e_i),e_i)Je_j\Big),
\end{align*}
where we used $-J\pi_TJ(JX)=JX$. 
The formula follows from \eqref{first.var.eq.2}. 
\end{proof}

\begin{remark}
If we define a Riemannian metric $G$ on $\TRL$ by $$G_L(JX,JY)=\int_L\overline{g}(X,Y)\vol_J$$ for $JX,JY\in T_L\TRL$ then
the downward gradient vector field of the $J$-volume functional $\Vol_J$ on $\TRL$ with respect to $G$ 
is given by $H_J+S_J$. The corresponding flow (the ``$J$-mean curvature flow'') is studied in \cite{LP}.
\end{remark}

\subsection{Second variation of the {\boldmath $J$}-volume}

We now study the stability of critical points of the $J$-volume, so we calculate the second variation of the $J$-volume form.
This generalises calculations in \cite[Proposition 3]{Bor}, which built on the second variation of volume of Lagrangians in 
K\"ahler manifolds derived by Chen, Leung and Nagano \cite[Theorem 4.1]{Chen}.

\begin{prop}\label{second.var.prop}
Let $\iota_{s,t}:L\rightarrow L_{s,t}\subseteq M$ be a two-parameter family of totally real submanifolds in an almost Hermitian manifold and let 
$\frac{\partial}{\partial s}\iota_{s,t}|_{s=t=0}=W$ and $\frac{\partial}{\partial t}\iota_{s,t}|_{s=t=0}=Z$.  Then
\begin{align*}
&\frac{\partial^2}{\partial s\partial t}\vol_J[\iota_{s,t}]|_{s=t=0}\\
&=\Bigg(\overline{g}(\pi_LJ(\tnabla_{e_i}W+\widetilde{T}(W,e_i)),e_j)\overline{g}(\pi_LJ(\tnabla_{e_j}Z+\widetilde{T}(Z,e_j)),e_i)\\
&-\overline{g}(\pi_L(\tnabla_{e_i}W+\widetilde{T}(W,e_i)),e_j)\overline{g}(\pi_L(\tnabla_{e_j}Z+\widetilde{T}(Z,e_j)),e_i)\\
&+\overline{g}\big(\pi_L\big(\tnabla_{e_i}W+\widetilde{T}(W,e_i)\big),e_i\big)\overline{g}\big(\pi_L\big(\tnabla_{e_j}Z+\widetilde{T}(Z,e_j)\big),e_j\big)\\
&+\overline{g}(\pi_L(\widetilde{R}(W,e_i)Z+\tnabla_{e_i}\tnabla_WZ),e_i)\\
&+\overline{g}(\pi_L(\tnabla_W\widetilde{T}(Z,e_i)+\widetilde{T}(\tnabla_WZ,e_i)+\widetilde{T}(Z,\tnabla_{e_i}W+\widetilde{T}(W,e_i))),e_i)
\Bigg)\vol_J[\iota],
\end{align*}
where at $p\in L$ we have that $e_1,\ldots,e_n$ is an orthonormal basis for $T_pL$.
\end{prop}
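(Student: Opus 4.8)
The plan is to differentiate the first-variation formula of Proposition \ref{volJ.prop} once more, this time in the $s$-direction, evaluating at $s=t=0$. Concretely, from the proof of Proposition \ref{volJ.prop} we have, at each point and for the two-parameter family,
$$\frac{\partial}{\partial t}\vol_J[\iota_{s,t}] = \overline{g}\big(\pi_L(\tnabla_{e_i(s,t)} Z_{s,t} + \widetilde T(Z_{s,t}, e_i(s,t))), e_i(s,t)\big)\,\vol_J[\iota_{s,t}],$$
where now $e_i(s,t)$ is a $g_{s,t}$-orthonormal frame pushed along and $Z_{s,t} = (\iota_{s,t})_*(\partial_t)$. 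Applying $\partial_s|_{s=t=0}$ to this product splits into two groups of terms: those where $\partial_s$ hits the explicit factor $\vol_J[\iota_{s,t}]$ — which by Proposition \ref{volJ.prop} again contributes $\overline{g}(\pi_L(\tnabla_{e_i}W+\widetilde T(W,e_i)),e_i)$ times the $t$-derivative coefficient, giving the third line of the claimed formula — and those where $\partial_s$ hits the coefficient $\overline{g}(\pi_L(\tnabla_{e_i}Z+\widetilde T(Z,e_i)),e_i)$ itself.

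For the coefficient term, the main subtlety is that the orthonormal frame $e_i(s,t)$ is $s$-dependent: $\partial_s e_i|_{0} = \tnabla_W e_i - (\text{correction to restore orthonormality})$, and one checks the correction is the symmetric part $\tfrac12(\overline{g}(\tnabla_W e_i, e_j) + \overline{g}(\tnabla_W e_j, e_i))e_j$, i.e. the skew part $\tfrac12(\overline{g}(\tnabla_W e_i,e_j)-\overline{g}(\tnabla_W e_j,e_i))e_j$ survives. Feeding this into $\partial_s$ of $\overline{g}(\pi_L(\tnabla_{e_i}Z+\widetilde T(Z,e_i)),e_i)$ produces: a curvature/second-covariant-derivative term $\overline{g}(\pi_L(\widetilde R(W,e_i)Z + \tnabla_{e_i}\tnabla_W Z),e_i)$ coming from commuting $\tnabla_W$ past $\tnabla_{e_i}$ (using $[W,e_i]=0$ since they are coordinate fields, so $\tnabla_W\tnabla_{e_i}Z = \tnabla_{e_i}\tnabla_W Z + \widetilde R(W,e_i)Z + \tnabla_{\widetilde T(W,e_i)}Z$, with care about the torsion term); the torsion line $\overline{g}(\pi_L(\tnabla_W\widetilde T(Z,e_i)+\widetilde T(\tnabla_W Z,e_i)+\widetilde T(Z,\tnabla_{e_i}W+\widetilde T(W,e_i))),e_i)$ from differentiating $\widetilde T(Z,e_i)$ and from the $\tnabla_{\widetilde T(W,e_i)}Z$ remainder recombining with $e_i$-derivatives; and finally the frame-rotation contribution, which after antisymmetrizing and using Lemma \ref{proj.lem} rearranges into the difference of the first two quadratic lines $\overline{g}(\pi_L J(\ldots W\ldots),e_j)\overline{g}(\pi_L J(\ldots Z\ldots),e_i) - \overline{g}(\pi_L(\ldots W\ldots),e_j)\overline{g}(\pi_L(\ldots Z\ldots),e_i)$.

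The hard part will be bookkeeping the frame-rotation term: one must show that the skew matrix $\omega_{ij} := \overline{g}(\partial_s e_i, e_j)|_0$ contracts against the $t$-variation data to yield exactly the antisymmetrized quadratic expression in the statement, and in particular that the ``$J$-twisted'' quadratic line appears with the correct sign. This is where Lemma \ref{proj.lem} ($\pi_L\circ J = J\circ\pi_J$, etc.) is essential: the index-swapped terms $\overline{g}(\pi_L(\tnabla_{e_i}Z+\cdots),e_j)$ for $i\neq j$ get reorganized, via the identity $\vol_{\overline g}(\ldots)$ being alternating, into a sum over pairs that reassembles as a determinant-type expression, and the $J$ enters because the $\vol_{\overline g}$ of the $2n$-frame $(e_1,\dots,e_n,Je_1,\dots,Je_n)$ mixes the $e$- and $Je$-slots when two entries are varied. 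I would organize this by first deriving a clean general formula for $\partial_s\partial_t$ of $\sqrt{\vol_{\overline g}(v_1,\dots,v_n,Jv_1,\dots,Jv_n)}$ as a function of the $v_i(s,t)$ — purely multilinear algebra, giving terms quadratic in first $s$- and $t$-derivatives plus a term linear in the mixed second derivative — and only then substitute $v_i = (\iota_{s,t})_*e_i$, $\partial_t v_i|_0 = \tnabla_{e_i}Z + \widetilde T(Z,e_i)$, $\partial_s v_i|_0 = \tnabla_{e_i}W + \widetilde T(W,e_i)$, and $\partial_s\partial_t v_i|_0$ computed as above. The $\vol_g$-factor derivative is routine given Proposition \ref{volJ.prop}. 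Everything else is careful but mechanical expansion.
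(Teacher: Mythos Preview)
Your proposal is correct in outline, and the organizational alternative you suggest at the end---``first derive a clean general formula for $\partial_s\partial_t$ of $\sqrt{\vol_{\overline g}(v_1,\dots,v_n,Jv_1,\dots,Jv_n)}$ and only then substitute''---is exactly what the paper does. The initial framing you give, however, is an unnecessary detour: the paper never introduces an $(s,t)$-dependent \emph{orthonormal} frame and so never has to track frame-rotation corrections $\omega_{ij}$ at all.

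The paper's trick (already used in Proposition~\ref{volJ.prop}) is to take normal coordinates $x_i$ at $p$ and set $e_i=\partial_{x_i}$ as \emph{fixed} coordinate fields. One then has the pointwise identity $\vol_J[\iota_{s,t}](e_1,\dots,e_n)=\rho_J(s,t)$ with $\rho_J(s,t)^2=\vol_{\overline g}\big((\iota_{s,t})_*e_1,\dots,J(\iota_{s,t})_*e_n\big)$, no orthonormalization at $(s,t)\neq(0,0)$ required. Differentiating $\rho_J^2$ (not $\rho_J$) in $s$ and $t$ is pure multilinear algebra and gives six terms; the cross terms between the $e$-slots and the $Je$-slots of $\vol_{\overline g}$ are precisely what produce the ``$J$-twisted'' quadratic line, via Lemma~\ref{proj.lem}, without any skew-matrix bookkeeping. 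One then converts via $\partial_s\partial_t\rho_J=\tfrac{1}{2\rho_J}\big(\partial_s\partial_t(\rho_J^2)-2\,\partial_s\rho_J\,\partial_t\rho_J\big)$; the subtracted product is what yields the third line of the statement with the right coefficient. Finally $\tnabla_W\tnabla_Z e_i$ is expanded using $[W,e_i]=[Z,e_i]=0$: note that with $[W,e_i]=0$ the curvature identity gives $\tnabla_W\tnabla_{e_i}Z=\widetilde R(W,e_i)Z+\tnabla_{e_i}\tnabla_W Z$ directly, with no residual $\tnabla_{\widetilde T(W,e_i)}Z$ term---your tentative formula there is slightly off. The torsion contributions in the last line come instead from differentiating $\widetilde T(Z,e_i)$ and from $\widetilde T(Z,\tnabla_W e_i)=\widetilde T(Z,\tnabla_{e_i}W+\widetilde T(W,e_i))$.
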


\noindent Notice again that these quantities are independent of the orthonormal basis chosen for $T_pL$ and hence are globally defined.

\begin{proof}  Let $p\in L$.  We choose coordinates $(x_1,\ldots,x_n,s,t)$ on $L\times(-\epsilon,\epsilon)\times(-\epsilon,\epsilon)$ in a similar manner to the 
proof of Proposition \ref{volJ.prop},
so that $(x_1,\ldots,x_n)$ are normal coordinates at $p$, $e_i=\frac{\partial}{\partial x_i}$ and $\frac{\partial}{\partial s}$, $\frac{\partial}{\partial t}$ pushforward to $W$ and $Z$ 
respectively at $t=0$.  Hence $[W,e_i]=[Z,e_i]=0$, as before.  

Moreover, as in the proof of Proposition \ref{volJ.prop} we see that
$$\vol_J[\iota_{s,t}]=\rho_J(s,t)\vol_{g}$$
where
$$\rho_J(s,t)=\sqrt{\vol_{\overline{g}}((\iota_{s,t})_*e_1,\ldots,(\iota_{s,t})_*e_n,J(\iota_{s,t})_*e_1,\ldots,J(\iota_{s,t})_*e_n)}.$$
By \eq{diff.rhoJt.eq} we have that
\begin{align*}
&\frac{\partial}{\partial t}\rho_J(s,t)^2\\
&=\sum_{i=1}^n\vol_{\overline{g}}((\iota_{s,t})_*e_1,\ldots,\frac{\partial}{\partial t}(\iota_{s,t})_*e_i,\ldots,(\iota_{s,t})_*e_n,J(\iota_{s,t})_*e_1,\ldots,J(\iota_{s,t})_*e_n)
\\
&+\sum_{i=1}^n\vol_{\overline{g}}((\iota_{s,t})_*e_1,\ldots,(\iota_{s,t})_*e_n,J(\iota_{s,t})_*e_1,\ldots,\frac{\partial}{\partial t} J(\iota_{s,t})_*e_i,\ldots,J(\iota_{s,t})_*e_n).
\end{align*}
Hence we may calculate, substituting $\tnabla_W$ for $\frac{\partial}{\partial s}|_{s=t=0}$ and $\tnabla_Z$ for $\frac{\partial}{\partial t}|_{s=t=0}$:
\begin{align}
\frac{\partial^2}{\partial s\partial t}&\rho_J(s,t)^2|_{s=t=0}\nonumber\\
&=\sum_{i\neq j}\vol_{\overline{g}}(e_1,\ldots,\tnabla_Ze_i, \ldots,\tnabla_We_j, \ldots,e_n,Je_1,\ldots,Je_n)\displaybreak[0]\nonumber\\
&+\sum_{i=1}^n\vol_{\overline{g}}(e_1,\ldots,\tnabla_W\tnabla_Ze_i, \ldots,e_n,Je_1,\ldots,Je_n)\displaybreak[0]\nonumber\\
&+\sum_{i,j=1}^n\vol_{\overline{g}}(e_1,\ldots,\tnabla_Ze_i,\ldots,e_n,Je_1,\ldots,\tnabla_WJe_j,\ldots,Je_n)\nonumber\\
&+\sum_{i\neq j}\vol_{\overline{g}}(e_1,\ldots,e_n,Je_1,\ldots,\tnabla_ZJe_i, \ldots,\tnabla_WJe_j, \ldots,Je_n)\nonumber\\
&+\sum_{i=1}^n\vol_{\overline{g}}(e_1,\ldots,e_n,Je_1,\ldots,\tnabla_W\tnabla_ZJe_i,\ldots,Je_n)\nonumber\\
&+\sum_{i,j=1}^n\vol_{\overline{g}}(e_1,\ldots,\tnabla_We_i,\ldots,e_n,Je_1,\ldots,\tnabla_ZJe_j,\ldots,Je_n).\label{second.var.big.eq}
\end{align}
The first and fourth terms in \eqref{second.var.big.eq} both give
\begin{align*}
&\sum_{i\neq j}\big(\overline{g}(\pi_L\tnabla_Ze_i,e_i)\overline{g}(\pi_L\tnabla_We_j,e_j)-\overline{g}(\pi_L\tnabla_Ze_i,e_j)\overline{g}(\pi_L\tnabla_We_j,e_i)\big)\rho_J^2\\
&=\sum_{i,j}\big(\overline{g}(\pi_L\tnabla_Ze_i,e_i)\overline{g}(\pi_L\tnabla_We_j,e_j)-\overline{g}(\pi_L\tnabla_Ze_i,e_j)\overline{g}(\pi_L\tnabla_We_j,e_i)\big)\rho_J^2.
\end{align*}
The second and fifth terms in \eqref{second.var.big.eq} both give
$$\sum_{i=1}^n\overline{g}(\pi_L\tnabla_W\tnabla_Ze_i,e_i)\rho_J^2.$$
Finally, the third and sixth terms in \eqref{second.var.big.eq} both give
\begin{align*}
&\sum_{i,j}\big(\overline{g}(\pi_L\tnabla_Ze_i,e_i)\overline{g}(\pi_L\tnabla_We_j,e_j)-\overline{g}(\pi_J\tnabla_Ze_i,Je_j)\overline{g}(\pi_LJ\tnabla_We_j,e_i)\big)\rho_J^2\\
&=\sum_{i,j}\big(\overline{g}(\pi_L\tnabla_Ze_i,e_i)\overline{g}(\pi_L\tnabla_We_j,e_j)+\overline{g}(\pi_LJ\tnabla_Ze_i,e_j)\overline{g}(\pi_LJ\tnabla_We_j,e_i)\big)\rho_J^2.
\end{align*}

Clearly, by \eq{diff.rhoJt.eq}, we have:
\begin{align*}
\frac{\partial^2\rho_J(s,t)}{\partial s\partial t}=\frac{1}{2\rho_J(s,t)}\left(
\frac{\partial^2\rho_J(s,t)^2}{\partial s\partial t}-2\frac{\partial\rho_J(s,t)}{\partial s}\frac{\partial\rho_J(s,t)}{\partial t}\right),\\ 
\frac{\partial\rho_J(s,t)}{\partial s}\frac{\partial\rho_J(s,t)}{\partial t}|_{s=t=0}=\sum_{i,j}\overline{g}(\pi_L\tnabla_Ze_i,e_i)\overline{g}(\pi_L\tnabla_We_j,e_j)
\rho_J^2,
\end{align*}
and hence
\begin{align*}
&\hspace{-20pt}\frac{\partial^2\rho_J(s,t)}{\partial s\partial t}|_{s=t=0}\\
=&\Big(\sum_{i,j}\overline{g}(\pi_L\tnabla_Ze_i,e_i)\overline{g}(\pi_L\tnabla_We_j,e_j)-\sum_{i,j}\overline{g}(\pi_L\tnabla_Ze_i,e_j)
\overline{g}(\pi_L\tnabla_We_j,e_i)\displaybreak[0]\\
&+\sum_{i,j}\overline{g}(\pi_LJ\tnabla_Ze_i,e_j)\overline{g}(\pi_LJ\tnabla_We_j,e_i)+\sum_{i=1}^n\overline{g}(\pi_L\tnabla_W\tnabla_Ze_i,e_i)\Big)\rho_J.
\end{align*}
Replacing $\tnabla_We_i=\tnabla_{e_i}W+\widetilde{T}(W,e_i)$ (since $[W,e_i]=0$) and similarly for $Z$ gives the first three terms in the statement.  For the remaining terms, we have
\begin{align*}
\tnabla_W\tnabla_Ze_i&=\tnabla_W(\tnabla_{e_i}Z+\widetilde{T}(Z,e_i))\\
&=\widetilde{R}(W,e_i)Z+\tnabla_{e_i}\tnabla_WZ\\
&+(\tnabla_W\widetilde{T})(Z,e_i)+\widetilde{T}(\tnabla_WZ,e_i)+\widetilde{T}(Z,\tnabla_We_i).
\end{align*}
Again replacing $\tnabla_We_i=\tnabla_{e_i}W+\widetilde{T}(W,e_i)$ gives the final terms.
\end{proof}

A simple special case is as follows.

\begin{cor}\label{div.cor}  Use the notation of Proposition \ref{second.var.prop}. 
If $W=Z=X$ tangential then 
$$\frac{\partial^2}{\partial t^2}\vol_J[\iota_t]|_{t=0}=\Div(X\Div(\rho_JX))\vol_g.$$
\end{cor}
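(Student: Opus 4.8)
The plan is to recognise that the case $W=Z=X$ with $X$ tangential is precisely the ``vertical'' situation in which the underlying submanifold does not move: one takes $\iota_t:=\iota_0\circ\phi_t$, where $\phi_t$ is the (local) flow of $X$ on $L$, so that $\frac{\partial}{\partial t}\iota_t|_{t=0}=\iota_{0*}X$ is the prescribed tangential variation field while $L_t=\iota_t(L)=L_0$ for all $t$. Concretely, in the notation of Proposition~\ref{second.var.prop} one takes the two-parameter family $\iota_{s,t}:=\iota_0\circ\phi_{s+t}$, for which indeed $W=Z=\iota_{0*}X$. The one ingredient we need is the reparametrization identity $\vol_J[\iota\circ\varphi]=\varphi^*\vol_J[\iota]$ from Section~\ref{s:canonical_data}: applied to $\varphi=\phi_t$ it gives $\vol_J[\iota_t]=\phi_t^*\vol_J[\iota_0]$.

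First I would differentiate. Since $\frac{d}{du}\phi_u^*=\phi_u^*\circ\mathcal{L}_X$ as operators on forms, we obtain $\frac{\partial^2}{\partial s\,\partial t}\vol_J[\iota_{s,t}]\big|_{s=t=0}=\frac{d^2}{du^2}\big(\phi_u^*\vol_J[\iota_0]\big)\big|_{u=0}=\mathcal{L}_X^{\,2}\,\vol_J[\iota_0]$, which is the quantity written $\frac{\partial^2}{\partial t^2}\vol_J[\iota_t]|_{t=0}$ in the statement (equivalently, the second $t$-derivative of the one-parameter restriction $\iota_t=\iota_0\circ\phi_t$). It then remains to identify this Lie derivative. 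Writing $\vol_J=\rho_J\,\vol_g$ and using $\mathcal{L}_X\vol_g=(\Div X)\,\vol_g$ together with the product rule $\Div(fX)=Xf+f\,\Div X$ for a function $f$, one finds
\begin{align*}
\mathcal{L}_X\vol_J&=(X\rho_J)\,\vol_g+\rho_J(\Div X)\,\vol_g=\Div(\rho_J X)\,\vol_g,\\
\mathcal{L}_X^{\,2}\vol_J&=\big(X\Div(\rho_J X)\big)\vol_g+\Div(\rho_J X)(\Div X)\,\vol_g=\Div\!\big(X\,\Div(\rho_J X)\big)\vol_g,
\end{align*}
which is exactly the claimed formula. (The first line also recovers the first-variation formula of Proposition~\ref{volJ.prop} in this special case.)

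As a consistency check one can instead specialise the general formula of Proposition~\ref{second.var.prop} directly, setting $W=Z=X$ with $X$ tangential and taking the cross-acceleration $\tnabla_W Z$ to be the one produced by the family $\iota_0\circ\phi_t$; this reproduces the same answer but requires pushing the full algebra of that proposition through, whereas the Lie-derivative computation above is immediate. The only delicate point, and in my view the real content of the corollary, is exactly this: the second derivative of the \emph{density} $\vol_J[\iota_t]$ genuinely depends on the acceleration of the family and not on the variation field $X$ alone, so one must commit to the canonical family $\iota_t=\iota_0\circ\phi_t$ attached to $X$ before the statement is even well-posed. Once that choice is fixed there is no analytic obstacle and the proof is the two-line computation above.
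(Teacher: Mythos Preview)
Your proof is correct and takes essentially the same approach as the paper: the paper's proof is the single line
\[
\frac{\partial^2}{\partial t^2}\vol_J[\iota_t]\big|_{t=0}=\mathcal{L}_X\mathcal{L}_X\vol_J[\iota]=\d\big(X\lrcorner\d(\rho_JX\lrcorner\vol_g)\big),
\]
invoking Cartan's formula twice, which is exactly your double Lie derivative computation rewritten via interior products rather than the product rule for $\Div$. Your version is more explicit about why the second derivative equals $\mathcal{L}_X^2\vol_J$ (via the flow reparametrization $\iota_t=\iota_0\circ\phi_t$ and the identity $\vol_J[\iota\circ\varphi]=\varphi^*\vol_J[\iota]$), and your closing remark about committing to the canonical family to pin down the acceleration $\tnabla_WZ$ is a valid clarification that the paper leaves implicit.
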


\begin{proof} 
We can 
see this from 
$$\frac{\partial^2}{\partial t^2}\vol_J[\iota_t]|_{t=0}=\mathcal{L}_X\mathcal{L}_X\vol_J[\iota]=\d(X\lrcorner\d(\rho_JX\lrcorner\vol_g))$$
using Cartan's formula twice.
\end{proof}

A more important case is the following.

\begin{prop}\label{second.var.Y.prop}  Use the notation of Propositions \ref{first.var.prop} and \ref{second.var.prop}. 
If $W=Z=JY$ where $Y$ is tangential and $M$ is K\"ahler, then 
\begin{align*}
\frac{\partial^2}{\partial t^2}\vol_J[\iota_t]|_{t=0}=&
-\Div(Y\Div(\rho_JY))\vol_g+\left(\frac{\Div(\rho_JY)}{\rho_J}\right)^2\vol_J[\iota]\\
&+\overline{g}(JY,H_{J})^2\vol_J[\iota]-\overline{\Ric}(Y,Y)\vol_J[\iota]\\
&-\overline{g}(\pi_J(\overline{\nabla}_{JY}JY+\overline{\nabla}_{Y}Y),H_{J})\vol_J[\iota]\\
&+\Div\big(\rho_J\pi_L(\overline{\nabla}_{JY}JY+\overline{\nabla}_{Y}Y)\big)\vol_g.
\end{align*}
\end{prop}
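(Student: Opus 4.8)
The starting point is Proposition~\ref{second.var.prop}. Since $(M,J,\overline{g})$ is K\"ahler, its Levi-Civita connection $\overline{\nabla}$ is itself unitary and torsion-free, so we may take the unitary connection $\tnabla$ of Section~\ref{s:Jvol} to be $\overline{\nabla}$; then $\widetilde{T}=0$, $\widetilde{R}=\overline{R}$ and $\overline{\nabla}J=0$. Substituting $W=Z=JY$ into the formula of Proposition~\ref{second.var.prop}, all torsion terms drop out and, at $p\in L$ with $e_1,\dots,e_n$ a $g$-orthonormal basis of $T_pL$, one is left with
\begin{align*}
\frac{\partial^2}{\partial t^2}\vol_J[\iota_t]|_{t=0}=\Big(&\overline{g}(\pi_LJ\overline{\nabla}_{e_i}JY,e_j)\,\overline{g}(\pi_LJ\overline{\nabla}_{e_j}JY,e_i)\\
&-\overline{g}(\pi_L\overline{\nabla}_{e_i}JY,e_j)\,\overline{g}(\pi_L\overline{\nabla}_{e_j}JY,e_i)+\big(\overline{g}(\pi_L\overline{\nabla}_{e_i}JY,e_i)\big)^2\\
&+\overline{g}\big(\pi_L(\overline{R}(JY,e_i)JY+\overline{\nabla}_{e_i}\overline{\nabla}_{JY}JY),e_i\big)\Big)\vol_J[\iota].
\end{align*}
The plan is to process these four pieces separately, using throughout the K\"ahler relations $\overline{\nabla}_{e_i}(JY)=J\overline{\nabla}_{e_i}Y$, $J\overline{\nabla}_{e_i}(JY)=-\overline{\nabla}_{e_i}Y$, and Lemma~\ref{proj.lem}.

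Consider first the last piece. Torsion-freeness and $\overline{\nabla}J=0$ give $\overline{\nabla}_{JY}JY=-\overline{\nabla}_YY+J[JY,Y]$, hence $\overline{\nabla}_{JY}JY+\overline{\nabla}_YY=J[JY,Y]$; this explains the combination $\overline{\nabla}_{JY}JY+\overline{\nabla}_YY$ occurring in the statement, and shows the last two lines of the claimed formula vanish along geodesics, where $[JY,Y]=0$ by Lemma~\ref{geod.lem}. Next, for any vector field $A$ along $L$, Proposition~\ref{volJ.prop} together with the computation in the proof of Proposition~\ref{first.var.prop} yields the pointwise identity
$$\overline{g}(\pi_L\overline{\nabla}_{e_i}A,e_i)\,\vol_J[\iota]=\Div(\rho_J\,\pi_L A)\,\vol_g-\overline{g}(\pi_J A,H_J)\,\vol_J[\iota]+\d(\rho_J\,V\lrcorner\vol_g)$$
for a suitable tangential $V$ depending linearly on $A$. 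Applying this with $A=\overline{\nabla}_{JY}JY$, and writing $A=(\overline{\nabla}_{JY}JY+\overline{\nabla}_YY)-\overline{\nabla}_YY$, produces the last two lines of the statement, namely $\Div\big(\rho_J\pi_L(\overline{\nabla}_{JY}JY+\overline{\nabla}_YY)\big)\vol_g$ and $-\overline{g}\big(\pi_J(\overline{\nabla}_{JY}JY+\overline{\nabla}_YY),H_J\big)\vol_J[\iota]$, together with a remainder built out of $\overline{\nabla}_YY$ and exact forms. The curvature term $\overline{g}(\pi_L\overline{R}(JY,e_i)JY,e_i)$ I would expand using the K\"ahler symmetries of $\overline{R}$ (notably $\overline{R}(JA,JB,\cdot,\cdot)=\overline{R}(A,B,\cdot,\cdot)$ and $\overline{R}(A,B)\circ J=J\circ\overline{R}(A,B)$) and the first Bianchi identity; this gives $-\overline{\Ric}(Y,Y)\vol_J[\iota]$ plus a further curvature term, to be cancelled below.

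Now the three Hessian-squared pieces. Using $\overline{\nabla}_{e_i}(JY)=J\overline{\nabla}_{e_i}Y$, $J\overline{\nabla}_{e_i}(JY)=-\overline{\nabla}_{e_i}Y$ and Lemma~\ref{proj.lem} ($\pi_L\circ J=J\circ\pi_J$, $J\circ\pi_L=\pi_J\circ J$), the first two become $\overline{g}(\pi_L\overline{\nabla}_{e_i}Y,e_j)\overline{g}(\pi_L\overline{\nabla}_{e_j}Y,e_i)$ and $-\overline{g}(\pi_J\overline{\nabla}_{e_i}Y,Je_j)\overline{g}(\pi_J\overline{\nabla}_{e_j}Y,Je_i)$, and the third is $\big(\overline{g}(\pi_L\overline{\nabla}_{e_i}JY,e_i)\big)^2$; for the inner trace of the latter I would use the pointwise identity from the proof of Proposition~\ref{first.var.prop}, $\overline{g}(\pi_L\overline{\nabla}_{e_i}JY,e_i)\vol_J[\iota]=-\overline{g}(JY,H_J)\vol_J[\iota]+\d(\rho_J\,V_Y\lrcorner\vol_g)$, whose $H_J$-part contributes the term $\overline{g}(JY,H_J)^2\vol_J[\iota]$. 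Re-commuting covariant derivatives in the remaining first- and second-order contributions (this is where the intrinsic curvature of $L$ appears, to be converted to $\overline{R}$ via the Gauss equation) and collecting with the leftover $\overline{\nabla}_YY$-terms from the previous paragraph, the pieces assemble into the exact form $-\Div\big(Y\,\Div(\rho_J Y)\big)\vol_g$, the term $\big(\Div(\rho_J Y)/\rho_J\big)^2\vol_J[\iota]$, and a curvature contribution which precisely cancels the leftover curvature term from the previous paragraph, so the total curvature contribution is $-\overline{\Ric}(Y,Y)\vol_J[\iota]$. Summing the four processed pieces gives the stated identity; the divergence terms are kept because the claim concerns the $n$-form $\vol_J[\iota_t]$ pointwise, not its integral over $L$.

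The main obstacle is the bookkeeping, on two fronts. First, one must track carefully the difference between the totally-real projections $\pi_L,\pi_J$ (and their $\overline{g}$-transposes $\pi_L^{\rm t},\pi_J^{\rm t}$) and the Riemannian-orthogonal projections $\pi_T,\pi_\perp$, which coincide only at Lagrangian points, and check that all the exact forms produced coalesce exactly into $-\Div(Y\,\Div(\rho_J Y))\vol_g$ and $\Div\big(\rho_J\pi_L(\overline{\nabla}_{JY}JY+\overline{\nabla}_YY)\big)\vol_g$ with no residue. Second, and more delicate, one must verify that the several curvature and second-fundamental-form terms — coming from the explicit $\overline{R}(JY,e_i)JY$, from the commutator identities applied to the Hessian pieces and the ensuing Gauss-equation substitution, and from the $H_J$-terms — cancel down to exactly $-\overline{\Ric}(Y,Y)$, which is where the K\"ahler identities and the definition~\eq{HJ.eq} of $H_J$ are essential.
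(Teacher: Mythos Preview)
Your starting point is right --- specialising Proposition~\ref{second.var.prop} to $W=Z=JY$ with $\tnabla=\overline{\nabla}$ and $\widetilde{T}=0$ gives exactly the four-term expression you wrote --- but you have missed the paper's key step, and your proposed substitute is both vaguer and more laborious than necessary.

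The paper does \emph{not} attack the two ``Hessian-squared'' cross terms $\overline{g}(\pi_LJ\overline{\nabla}_{e_i}JY,e_j)\overline{g}(\pi_LJ\overline{\nabla}_{e_j}JY,e_i)-\overline{g}(\pi_L\overline{\nabla}_{e_i}JY,e_j)\overline{g}(\pi_L\overline{\nabla}_{e_j}JY,e_i)$ by commuting covariant derivatives and invoking the Gauss equation. Instead it applies Proposition~\ref{second.var.prop} a \emph{second} time, now with $W=Z=Y$ tangential, and uses Corollary~\ref{div.cor} to recognise that whole expression as $\Div(Y\Div(\rho_JY))\vol_g$. The simple K\"ahler identity $\overline{\nabla}_{e_i}(JY)=J\overline{\nabla}_{e_i}Y$ then shows that the cross terms for $JY$ are exactly the \emph{negative} of the cross terms for $Y$, so one may substitute. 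This immediately produces $-\Div(Y\Div(\rho_JY))\vol_g$, the term $\overline{g}(\pi_L\overline{\nabla}_{e_i}Y,e_i)^2\vol_J$, an extra curvature term $\overline{g}(\pi_L\overline{R}(Y,e_i)Y,e_i)\vol_J$, and the $\overline{\nabla}_{e_i}\overline{\nabla}_YY$ term --- all in one stroke, with no commutators, no intrinsic curvature of $L$, and no Gauss equation.

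That extra curvature term is precisely what combines with your $\overline{g}(\pi_L\overline{R}(JY,e_i)JY,e_i)$ to give $-\overline{\Ric}(Y,Y)$: using the K\"ahler symmetries one checks $\overline{g}(\pi_L\overline{R}(JY,e_i)JY,e_i)=\overline{g}(\pi_J\overline{R}(Y,Je_i)Y,Je_i)$, and then $\pi_L+\pi_J=\mathrm{Id}$ with $\{e_i,Je_i\}$ spanning $T_pM$ gives the Ricci trace directly. There is no ``further curvature term, to be cancelled below'' hanging around.

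Two smaller points. First, the pointwise identities you quote should read simply $\overline{g}(\pi_L\overline{\nabla}_{e_i}X,e_i)=\Div(\rho_JX)/\rho_J$ for $X$ tangential and $\overline{g}(\pi_L\overline{\nabla}_{e_i}JY,e_i)=-\overline{g}(JY,H_J)$; there are no extra exact forms $\d(\rho_JV\lrcorner\vol_g)$ as you wrote. These are exact pointwise equalities coming from Propositions~\ref{volJ.prop} and~\ref{first.var.prop}, and for a general $A$ one splits $A=\pi_LA+\pi_JA$ and applies each. Second, once the trick above is in place there is no residual bookkeeping problem: the terms $\overline{\nabla}_{e_i}\overline{\nabla}_{JY}JY$ and $\overline{\nabla}_{e_i}\overline{\nabla}_YY$ combine as $\overline{\nabla}_{e_i}(\overline{\nabla}_{JY}JY+\overline{\nabla}_YY)$, to which the split identity applies directly, yielding the last two lines of the statement with no leftover.
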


\begin{proof}
In this setting $\tnabla=\overline{\nabla}$, the Levi-Civita connection, and $\widetilde{T}=0$.  Hence by Proposition \ref{second.var.prop} we have
\begin{align}
&\frac{\partial^2}{\partial t^2}\vol_J[\iota_{t}]|_{t=0}\nonumber\\
&=\Big(\overline{g}(\pi_LJ\overline{\nabla}_{e_i}JY,e_j)\overline{g}(\pi_LJ\overline{\nabla}_{e_j}JY,e_i)
-\overline{g}(\pi_L\overline{\nabla}_{e_i}JY,e_j)\overline{g}(\pi_L\overline{\nabla}_{e_j}JY,e_i)
\nonumber\\
&\qquad+\overline{g}\big(\pi_L\overline{\nabla}_{e_i}JY,e_i\big)\overline{g}\big(\pi_L\overline{\nabla}_{e_j}JY,e_j\big)\nonumber\\
&\qquad+\overline{g}(\pi_L(\overline{R}(JY,e_i)JY+\overline{\nabla}_{e_i}\overline{\nabla}_{JY}JY),e_i)\Big)\vol_J[\iota].\label{Y.eq-1}
\end{align}
Moreover, Proposition \ref{second.var.prop} and Corollary \ref{div.cor} give us that
\begin{align*}
 &\Div(Y\Div(\rho_JY))\vol_g\\
&=\Big(\overline{g}(\pi_LJ\overline{\nabla}_{e_i}Y,e_j)\overline{g}(\pi_LJ\overline{\nabla}_{e_j}Y,e_i)
-\overline{g}(\pi_L\overline{\nabla}_{e_i}Y,e_j)\overline{g}(\pi_L\overline{\nabla}_{e_j}Y,e_i)
\\
&\qquad+\overline{g}\big(\pi_L\overline{\nabla}_{e_i}Y,e_i\big)\overline{g}\big(\pi_L\overline{\nabla}_{e_j}Y,e_j\big)+\overline{g}(\pi_L(\overline{R}(Y,e_i)Y+\overline{\nabla}_{e_i}\overline{\nabla}_{Y}Y),e_i)\Big)\vol_J[\iota].
\end{align*}

We observe that, since $\overline{\nabla}J=0$ as $M$ is K\"ahler,
\begin{align*}
&\overline{g}(\pi_LJ\overline{\nabla}_{e_i}(JY),e_j)\overline{g}(\pi_LJ\overline{\nabla}_{e_j}(JY),e_i)-
\overline{g}(\pi_L\overline{\nabla}_{e_i}(JY),e_j)\overline{g}(\pi_L\overline{\nabla}_{e_j}(JY),e_i)\\
&=\overline{g}(\pi_L\overline{\nabla}_{e_i}Y,e_j)\overline{g}(\pi_L\overline{\nabla}_{e_j}Y,e_i)-
\overline{g}(\pi_LJ\overline{\nabla}_{e_i}Y,e_j)\overline{g}(\pi_LJ\overline{\nabla}_{e_j}Y,e_i).
\end{align*}
Hence, we see that
\begin{align}
\big(\overline{g}(\pi_LJ\overline{\nabla}_{e_i}(JY),e_j)&\overline{g}(\pi_LJ\overline{\nabla}_{e_j}(JY),e_i)\nonumber\\
&-
\overline{g}(\pi_L\overline{\nabla}_{e_i}(JY),e_j)\overline{g}(\pi_L\overline{\nabla}_{e_j}(JY),e_i)\big)\rho_J\nonumber\\
&=-\Div(Y\Div\rho_JY)+\overline{g}\big(\pi_L\overline{\nabla}_{e_i}Y,e_i\big)\overline{g}\big(\pi_L\overline{\nabla}_{e_j}Y,e_j\big)\rho_J\nonumber\\
&\quad+\overline{g}(\pi_L(\overline{R}(Y,e_i)Y+\overline{\nabla}_{e_i}\overline{\nabla}_{Y}Y),e_i)\rho_J.
\label{Y.eq0}
\end{align}
We see that, using the K\"ahler condition,
\begin{align*}
\overline{g}(\pi_L\overline{R}(JY,e_i)JY,e_i)&=-\overline{g}(\pi_J\overline{R}(JY,e_i)Y,Je_i)\nonumber=-\overline{g}(\overline{R}(Y,\pi_J^{\rm t}Je_i)JY,e_i)\nonumber\\
&=\overline{g}(\overline{R}(Y,\pi_J^{\rm t}Je_i)Y,Je_i)=\overline{g}(\pi_J\overline{R}(Y,Je_i)Y,Je_i).\label{Y.eq2}
\end{align*}
We deduce that
\begin{align*}
\overline{g}(\pi_L\overline{R}(JY,&e_i)JY,e_i)+\overline{g}(\pi_L\overline{R}(Y,e_i)Y,e_i)\\
&=
\overline{g}(\pi_L\overline{R}(Y,e_i)Y,e_i)+\overline{g}(\pi_J\overline{R}(Y,Je_i)Y,Je_i)=-\overline{\Ric}(Y,Y).
\end{align*}
Therefore, using \eq{Y.eq-1} and \eq{Y.eq0} we see that
\begin{align*}
\frac{\partial^2}{\partial t^2}\vol_J[\iota_t]|_{t=0}
=&-\Div(Y\Div(\rho_JY))\vol_g+\overline{g}(\pi_L\overline{\nabla}_{e_i}Y,e_i)^2\vol_J[\iota]\\
&+\overline{g}(\pi_L\overline{\nabla}_{e_i}JY,e_i)^2\vol_J[\iota]-\overline{\Ric}(Y,Y)\vol_J[\iota]\\
&+\overline{g}(\pi_L\overline{\nabla}_{e_i}(\overline{\nabla}_{JY}JY+\overline{\nabla}_{Y}Y),e_i)\vol_J[\iota]
\end{align*}

As in Propositions \ref{volJ.prop} and \ref{first.var.prop}, since here $\widetilde{T}=0$, we see that
\begin{gather*}
\overline{g}(\pi_L\overline{\nabla}_{e_i}Y,e_i)=\frac{\Div(\rho_JY)}{\rho_J},\qquad
\overline{g}(\pi_L\overline{\nabla}_{e_i}JY,e_i)=-\overline{g}(JY,H_J),
\end{gather*}
and
\begin{align*}
\overline{g}(\pi_L\overline{\nabla}_{e_i}(\overline{\nabla}_{JY}JY+\overline{\nabla}_{Y}Y),e_i)=&\frac{\Div\big(\rho_J\pi_L(\overline{\nabla}_{JY}JY+\overline{\nabla}_{Y}Y)\big)}{\rho_J}\\
&-\overline{g}\big(\pi_J(\overline{\nabla}_{JY}JY+\overline{\nabla}_{Y}Y),H_J\big).
\end{align*}
The result now follows.
\end{proof}

We deduce the following important second variation of the $J$-volume functional in the K\"ahler setting, as an immediate corollary of Proposition \ref{second.var.Y.prop}.

\begin{prop}\label{second.var.Kahler.prop}
Let $\iota_t:L\rightarrow L_t\subseteq M$ be compact totally real submanifolds in a K\"ahler manifold and let 
$\frac{\partial}{\partial t}\iota_t|_{t=0}=JY$ for $Y$ tangential.  Then
\begin{align}
\frac{\partial^2}{\partial t^2}\Vol_J(L_t)|_{t=0}=\int_L\Bigg(&\left(\frac{\Div(\rho_JY)}{\rho_J}\right)^2+\overline{g}(JY,H_{J})^2-\overline{\Ric}(Y,Y)\nonumber\\
 &-\overline{g}(\pi_J(\overline{\nabla}_{JY}JY+\overline{\nabla}_{Y}Y),H_{J})\Bigg)\vol_J\label{second.var.Kahler.eq}
\end{align}
\end{prop}

\noindent If $L$ is a critical point of $\Vol_J$, so $H_{J}=0$, then 
all the terms in the integrand in \eqref{second.var.Kahler.eq} are automatically non-negative except for $-\overline{\Ric}(Y,Y)$, so we can ensure non-negativity by imposing an ambient curvature condition.   
We deduce the following, which first appeared in \cite{Bor}.

\begin{cor}\label{stable.cor}  In a K\"ahler manifold with $\overline{\Ric}\leq 0$ (respectively, $\overline{\Ric}<0$), 
 the critical points of the $J$-volume functional are stable (respectively, strictly stable).
\end{cor}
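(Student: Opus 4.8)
\textit{Proof plan.} The statement is an immediate corollary of Proposition \ref{second.var.Kahler.prop}, so the plan is simply to specialize that formula to a critical point and invoke the curvature hypothesis. First I would observe that, since $M$ is K\"ahler, the unitary connection may be taken to be the Levi-Civita connection $\overline{\nabla}$, which is torsion-free; hence $\widetilde{T}=0$ and so the correction term $S_J$ of \eq{SJ.eq} vanishes identically. By Proposition \ref{first.var.prop}, the condition that $L$ be a critical point of $\Vol_J$, namely $\int_L\overline{g}(JY,H_J+S_J)\vol_J=0$ for all tangential $Y$, is therefore equivalent to $H_J=0$.

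Next I would substitute $H_J=0$ into the second variation formula of Proposition \ref{second.var.Kahler.prop}. The terms $\overline{g}(JY,H_J)^2$ and $\overline{g}\big(\pi_J(\overline{\nabla}_{JY}JY+\overline{\nabla}_{Y}Y),H_J\big)$ then both vanish, leaving
$$\frac{\partial^2}{\partial t^2}\Vol_J(L_t)|_{t=0}=\int_L\left(\left(\frac{\Div(\rho_JY)}{\rho_J}\right)^2-\overline{\Ric}(Y,Y)\right)\vol_J.$$
The first term of the integrand is a square, hence pointwise non-negative, and the second is pointwise non-negative precisely because $\overline{\Ric}\leq 0$; since $\vol_J$ is a positive volume form, the integral is $\geq 0$. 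This proves that every critical point is stable.

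For the strict case, under the hypothesis $\overline{\Ric}<0$ I would argue as follows. A tangent vector to $\TRL$ at $L$ is a section $JY$ of $J(TL)$, and the corresponding variation is trivial exactly when $Y\equiv 0$. If $Y\not\equiv 0$, then $Y(p)\neq 0$ for some $p\in L$, so by continuity $-\overline{\Ric}(Y,Y)>0$ on a neighbourhood of $p$; as the full integrand is pointwise non-negative and $\vol_J>0$, this forces $\int_L(-\overline{\Ric}(Y,Y))\vol_J>0$ and hence $\frac{\partial^2}{\partial t^2}\Vol_J(L_t)|_{t=0}>0$. Thus the critical point is strictly stable.

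I do not anticipate any genuine obstacle: the result follows formally from the preceding propositions. The only two points that merit a line of care are (i) justifying that at a critical point one has $H_J=0$, rather than merely $H_J+S_J=0$, which uses the vanishing of torsion in the K\"ahler setting, and (ii) in the strict case, upgrading pointwise positivity of $-\overline{\Ric}(Y,Y)$ on the locus where $Y\neq 0$ to positivity of the integral, which is a routine continuity argument.
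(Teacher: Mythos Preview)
Your proposal is correct and follows precisely the approach taken in the paper, which simply observes (in the paragraph preceding the corollary) that at a critical point $H_J=0$, whence all terms in the second variation formula of Proposition \ref{second.var.Kahler.prop} are non-negative once $\overline{\Ric}\leq 0$. Your additional care in point (i), deducing $H_J=0$ from $S_J=0$ via torsion-freeness, and in point (ii), the continuity argument for strict stability, are both sound and make explicit what the paper leaves implicit.
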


\begin{remark}
We can repeat our discussion in the almost Hermitian setting, but the appearance of torsion terms means the 
second variation formula is more complicated and does not, as far as we are aware, naturally lead to a stability property for the critical points of the 
$J$-volume functional as in the K\"ahler case.
\end{remark}
\subsection{Convexity of the {\boldmath $J$}-volume}\label{ss:convexity}
Stability is an infinitesimal condition. We now want to show that we can obtain a much stronger result by taking into account our notion of geodesics in $\TRL$.  

To start, notice that if $\overline{\Ric}(Y,Y)\leq 0$ then everything in the second variation formula \eqref{second.var.Kahler.eq} is non-negative except potentially for $-\overline{g}(\pi_J(\overline{\nabla}_{JY}JY+\overline{\nabla}_{Y}Y),H_{J})$.  
We also see that, by locally extending $Y$ in a neighbourhood of $L$, 
$$\overline{\nabla}_{JY}JY+\overline{\nabla}_YY=J(\overline{\nabla}_{JY}Y-J\overline{\nabla}_YY)=J[JY,Y].$$
Hence, if we deform $L$ in a direction $JY$ such that $[JY,Y]=0$, which is equivalent to the local diffeomorphisms of $L$ generated by $Y$ and the deformations of $L$ 
generated by $JY$ commute, then $\Vol_J$ is convex in the direction $JY$, in the sense that the second variation is non-negative.  
This condition is precisely guaranteed by our notion of geodesic from Lemma \ref{geod.lem} so we deduce the following.

\begin{thm}\label{thm:convexity}
In a K\"ahler manifold with $\overline{\Ric}\leq 0$ (respectively, $\overline{\Ric}<0$), the $J$-volume functional is convex (respectively, strictly convex) in the sense of Definition \ref{convex.dfn}.
\end{thm}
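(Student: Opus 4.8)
The plan is to reduce Theorem~\ref{thm:convexity} directly to the second variation formula of Proposition~\ref{second.var.Kahler.prop}. By Definition~\ref{convex.dfn}, convexity of $\Vol_J$ means that $t\mapsto\Vol_J(L_t)$ has non-negative second derivative whenever $L_t$ is a geodesic in $\TRL$. So first I would fix an arbitrary geodesic $(L_t)$ and, using Lemma~\ref{geod.lem}, write it via a family of immersions $\iota_t$ with $\frac{\d}{\d t}\iota_t=J\iota_{t*}(X)$ for a fixed vector field $X$ on $L$. It suffices to check non-negativity of $\frac{\d^2}{\d t^2}\Vol_J(L_t)$ at an arbitrary time; since the geodesic equation is preserved under time translation, I may as well work at $t=0$ and set $Y:=X$, $Z=W=JY$, which is exactly the setup of Proposition~\ref{second.var.Kahler.prop}.

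The key computation is then the one already flagged in the text: along a geodesic the vector field satisfies $[JY,Y]=0$, and locally extending $Y$ to a neighbourhood of $L$ one has
\[
\overline{\nabla}_{JY}JY+\overline{\nabla}_Y Y = J\big(\overline{\nabla}_{JY}Y - J\overline{\nabla}_Y Y\big) = J[JY,Y] = 0,
\]
using $\overline{\nabla}J=0$ (the K\"ahler condition) and the torsion-freeness of $\overline{\nabla}$. Therefore the last term in the integrand of Proposition~\ref{second.var.Kahler.prop}, namely $-\overline{g}(\pi_J(\overline{\nabla}_{JY}JY+\overline{\nabla}_Y Y),H_J)$, vanishes identically. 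What remains is
\[
\frac{\partial^2}{\partial t^2}\Vol_J(L_t)\Big|_{t=0}=\int_L\left(\left(\frac{\Div(\rho_J Y)}{\rho_J}\right)^2+\overline{g}(JY,H_J)^2-\overline{\Ric}(Y,Y)\right)\vol_J.
\]
The first two terms are manifestly non-negative, and $-\overline{\Ric}(Y,Y)\ge 0$ precisely because $\overline{\Ric}\le 0$; hence the integral is $\ge 0$, giving convexity. For the strict statement, if $\overline{\Ric}<0$ then $-\overline{\Ric}(Y,Y)>0$ at every point where $Y\ne 0$; since $Y$ is a fixed, not identically zero, vector field defining a genuine (non-constant) geodesic, the set where $Y\ne 0$ is open and non-empty, so the integral is strictly positive. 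This yields strict convexity in the sense of Definition~\ref{convex.dfn}.

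One subtlety I would address explicitly is that the second variation formula of Proposition~\ref{second.var.Kahler.prop} was derived at $t=0$ for an arbitrary two-parameter family, so to cover a whole geodesic one must note that the formula applies at each time $t_0$ with $L_{t_0}$ playing the role of the base submanifold; this is legitimate because a geodesic restricted to a neighbourhood of $t_0$ is again a geodesic through $L_{t_0}$ with the same velocity field $X$, and $X$ remaining a fixed vector field on the abstract manifold $L$ throughout. I expect the main (minor) obstacle to be bookkeeping: making sure that the identification $T_{L_t}\TRL\simeq\Lambda^0(TL)$ is used consistently so that ``the same $X$'' genuinely makes sense along the geodesic, and confirming that the pointwise identity $\overline{\nabla}_{JY}JY+\overline{\nabla}_Y Y = J[JY,Y]$ is independent of the chosen local extension of $Y$ (it is, since the left-hand side is tensorial once $[JY,Y]=0$, or one simply observes that the final expression only enters through its $\pi_J$-projection paired with $H_J$). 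No hard analysis is needed: this is a direct consequence of the already-established second variation formula together with the defining property of geodesics.
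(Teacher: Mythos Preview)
Your proposal is correct and follows essentially the same route as the paper: invoke Proposition~\ref{second.var.Kahler.prop}, use the K\"ahler identity $\overline{\nabla}_{JY}JY+\overline{\nabla}_YY=J[JY,Y]$, and kill the offending term via the commutation $[JY,Y]=0$ guaranteed by Lemma~\ref{geod.lem}. Your extra remarks on time-translation and on strictness (requiring $Y\not\equiv 0$) are reasonable elaborations that the paper leaves implicit.
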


\subsection{Critical points of the {\boldmath $J$}-volume}\label{ss:critical}

Analogously to the Riemannian setting we  say that a totally real submanifold is \textit{$J$-minimal} if it is a critical point for the $J$-volume, \textit{i.e.}~if $H_J=0$.

Recall from Lemma \ref{Lag.Jvol.lem} that the $J$-volume coincides with the standard volume on Lagrangians and that the sets of $J$-minimal Lagrangians and minimal Lagrangians coincide.  In \cite{LP} we show that this result can be improved by adding assumptions on the ambient manifold. Specifically, we prove the following.

\begin{prop}\label{prop:critical_points}
Assume $M$ is K\"ahler-Einstein with $\overline{\Ric}\neq 0$. Then the sets of $J$-minimal totally real submanifolds and minimal Lagrangians coincide.
\end{prop}

The case of K\"ahler Ricci-flat, in particular Calabi--Yau, manifolds is  special. In this case, by a calibration argument, $J$-minimal totally real submanifolds are $\Vol_J$-minimizers: we call them ``special totally real submanifolds'', in analogy with the well-known class of special Lagrangians, and study them in \cite{LP}.

The following uniqueness result is a simple consequence of strict convexity.

\begin{cor}\label{cor:unique}
Let $M$ be a K\"ahler manifold with $\overline{\Ric}<0$ and $L_0\in\mathcal{T}$ be $J$-minimal. Let $\Gamma\subseteq\mathcal{T}$ denote the set of totally real submanifolds which can be connected to $L_0$ by a geodesic ray. 
Then $L_0$ is the unique $J$-minimal submanifold in $\Gamma$.
\end{cor}

\section{Abstract framework}\label{s:abstract}
We now introduce an abstract framework which will help us clarify and continue to analyze the structure of $\mathcal{T}$.
\subsection{A canonical connection on homogeneous spaces}\label{ss:homog_spaces}
Let $G$ be a finite-dimensional Lie group. Let $L$ and $R$ denote the left and right multiplication operators, $Ad$ the adjoint action of $G$ on $G$ and $ad$ its differential, inducing an action of $G$ on $T_eG$. Let $\mathfrak{g}$ denote the Lie algebra of $G$, \textit{i.e.}~the space of $L$-invariant vector fields. In the course of this section it will be useful to emphasize the distinction between $T_eG$ and $\mathfrak{g}$, using the notation
$$T_eG\rightarrow\mathfrak{g},\ \ X\mapsto\tilde{X}$$
to refer to the isomorphism induced by $L$.

Given $X\in T_eG$, consider the 1-dimensional subgroup of diffeomorphisms of $G$ defined by the flow $\phi_t$ of $\tilde{X}$:
\begin{equation}\label{eq:Xflow}
 \frac{d}{dt}\phi_t=\tilde{X}_{|\phi_t},\ \ \phi_0=\mbox{Id}.
\end{equation}
The isomorphisms $(L_g)_*:T_eG\rightarrow T_gG$ identify each tangent space with $T_eG$, thus inducing a canonical way to differentiate vector fields. This yields a connection on $TG$, known as the \textit{canonical $L$-invariant connection}. The parallel vector fields are the elements of $\mathfrak{g}$, so the flowlines of $\phi_t$ are  the geodesics of this connection. In particular, the geodesics through $e$ are the 1-parameter subgroups $\mbox{exp}(tX):=\phi_t(e)$. The $L$-invariance of the connection implies that $L$ preserves the geodesics. This is reflected in the fact that, for any $g\in G$, $L_g\phi_t$ coincides with the flowline passing through $g$. 

Now fix a closed subgroup $H$. Assume there exists a decomposition
\begin{equation*}
T_eG=T_eH\oplus M, \ \ \mbox{for some $ad_H$-invariant subspace $M$}.
\end{equation*}
Let $\mathfrak{h}$, $\mathfrak{m}$ denote the corresponding $L$-invariant distributions on $G$ so that $TG=\mathfrak{h}\oplus\mathfrak{m}$.
Consider the projection $\pi:G\rightarrow G/H$, viewed as an $H$-principal fibre bundle. Notice that $\mathfrak{h}$ is tangent to the $R_H$-action: choosing  $g\in G$ and a 1-parameter subgroup $h_t$ in $H$, we see that
\begin{equation*}
 \frac{d}{dt}R_{h_t}g_{|t=0}=\frac{d}{dt}gh_{t|t=0}=(L_g)_*\frac{d}{dt}h_{t|t=0}\in \mathfrak{h}_{|g}.
\end{equation*}
This is a manifestation of the fact that $L$-invariant vector fields are the fundamental vector fields of the $R$-action.

We also see   $\mathfrak{m}$ is $R_H$-invariant: given $X\in M$ so that $(L_g)_*X\in\mathfrak{m}_{|g}$, we have
\begin{equation*}
(R_h)_*(L_g)_*X=(L_g)_*(R_h)_*X=(L_g)_*(L_h)_*Y=(L_{gh})_*Y\in\mathfrak{m}_{|gh},
\end{equation*}
where we use that $M$ is $ad_H$-invariant so $(L_{h^{-1}})_*(R_h)_*X=Y$, for some $Y\in M$.

The splitting $TG=\mathfrak{h}\oplus\mathfrak{m}$ thus defines a connection on the principal fibre bundle, and induced connections on all associated bundles $G\times_\rho V$, where $\rho$ is a $G$-action on the vector space $V$.
The following result shows that one of these bundles is particularly relevant to the geometry of $G/H$.
\begin{prop} \label{p:homogeneous}
There is an isomorphism
\begin{equation*}
 G\times_{ad_H}M\rightarrow T(G/H),\ \ [g,X]\mapsto \pi_*(L_g)_*X.
\end{equation*}
Thus $G/H$ has a canonical connection induced from the connection $\mathfrak{m}$ on the principal bundle $G$.
Geodesics in $G/H$ are of the form $\pi(g_t)$, where $g_t$ is a horizontal curve in $G$ satisfying, for some fixed $X\in M$, 
\begin{equation*}
 \frac{d}{dt}g_t=(L_{g_t})_*X\ \ (\mbox{equivalently,}\ \frac{d}{dt}g_t=\tilde{X}_{|g_t}).
\end{equation*}
 In other words, geodesics in $G/H$ are the projections of geodesics in $G$ defined by the canonical $L$-invariant connection, with initial direction in $M$.
\end{prop}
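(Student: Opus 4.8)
The plan is to mirror, in this finite-dimensional Lie-theoretic setting, the argument already carried out in Section~\ref{ss:connection} for the bundle $\mathcal{P}\to\TRL$. Three things must be established: (a) the formula $[g,X]\mapsto\pi_*(L_g)_*X$ defines a vector bundle isomorphism $G\times_{ad_H}M\simeq T(G/H)$; (b) under this isomorphism the connection induced on $T(G/H)$ by the principal connection $\mathfrak{m}$ has the stated parallel vector fields; (c) the geodesics are precisely the projections of integral curves of $L$-invariant vector fields $\tilde X$ with $X\in M$, equivalently the projections of the horizontal curves $g_t$ solving $\frac{d}{dt}g_t=(L_{g_t})_*X$.

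For (a), I would first check that the formula descends to the quotient $G\times_{ad_H}M$. Replacing $(g,X)$ by $(gh,ad_{h^{-1}}X)$ with $h\in H$, and using that at $e$ we have $ad_{h^{-1}}=(L_{h^{-1}})_*\circ(R_h)_*$ together with the commutation of left and right translations, one gets $(L_{gh})_*ad_{h^{-1}}X=(L_g)_*(R_h)_*X=(R_h)_*(L_g)_*X$; applying $\pi_*$ and using $\pi\circ R_h=\pi$ shows the two representatives give the same tangent vector, so the map is well defined, and it is fibrewise linear by construction. For injectivity on the fibre over $\pi(g)$: if $\pi_*(L_g)_*X=0$ then $(L_g)_*X\in\ker\pi_{*|g}=\mathfrak{h}_{|g}=(L_g)_*T_eH$, hence $X\in T_eH\cap M=\{0\}$. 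Since $\dim M=\dim G-\dim H=\dim(G/H)$, each fibre map is an isomorphism, so the bundle map is an isomorphism; smoothness is routine.

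For (b) and (c), recall from the general description of parallel sections recalled in Section~\ref{ss:connection} that a section of an associated bundle $G\times_\rho V$ along a curve $c(t)$ in $G/H$ is parallel exactly when it has the form $t\mapsto[g_t,v]$ with $g_t$ a horizontal lift of $c(t)$ and $v\in V$ fixed. Transporting this through the isomorphism of (a), a vector field along $c(t)$ is parallel iff it equals $t\mapsto\pi_*(L_{g_t})_*v$ for a horizontal lift $g_t$ and fixed $v\in M$. Now write an arbitrary curve in $G/H$ as $c(t)=\pi(g_t)$ with $g_t$ the horizontal lift, so $\frac{d}{dt}g_t\in\mathfrak{m}_{|g_t}$ and hence $\frac{d}{dt}g_t=(L_{g_t})_*X_t$ for a unique curve $X_t\in M$; then $\frac{d}{dt}c(t)=\pi_*(L_{g_t})_*X_t$ corresponds to $[g_t,X_t]$. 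Thus $c$ is a geodesic — i.e.\ $\frac{d}{dt}c$ is parallel — if and only if $X_t\equiv X$ is constant, which is the equation $\frac{d}{dt}g_t=(L_{g_t})_*X=\tilde X_{|g_t}$; such $g_t$ is automatically horizontal since $X\in M$, and its flow is, by definition, a geodesic of the canonical $L$-invariant connection on $G$. Since every curve in $G/H$ admits a horizontal lift, this proves both directions.

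The step demanding the most care is (b): checking that "parallel in the associated bundle $G\times_{ad_H}M$" really corresponds, under the isomorphism of (a), to "parallel in $T(G/H)$". This is bookkeeping with horizontal lifts rather than genuine analysis — entirely parallel in spirit to the discussion preceding Lemma~\ref{geod.lem} — and in the finite-dimensional setting no estimates or ODE subtleties intervene; the only mild point is invoking existence and uniqueness of horizontal lifts, which here is a standard consequence of the theory of ordinary differential equations.
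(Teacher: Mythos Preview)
Your proof is correct. The paper does not provide an explicit proof of this proposition: it is stated as a summary of the standard theory of reductive homogeneous spaces, relying on the preceding paragraphs (which verify that $\mathfrak{h}$ is vertical and $\mathfrak{m}$ is $R_H$-invariant) together with the general description of parallel sections of associated bundles recalled in Section~\ref{ss:connection}. Your argument fills in exactly those details in the expected way---the well-definedness check via $\pi\circ R_h=\pi$, the dimension count for the fibrewise isomorphism, and the identification of parallel vector fields as $[g_t,v]$ with $g_t$ horizontal and $v$ fixed---so there is nothing to compare against and nothing to correct.
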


\subsection{Geometry of complexified Lie groups} \label{ss:cpx_lie}
Let $G^c$ be a complexified Lie group, \textit{i.e.}~a complex Lie group with Lie algebra isomorphic to $\mathfrak{g}\otimes \C$. We now study the homogeneous space $G^c/G$.

The maps $L$ and $R$ are holomorphic, so each operator $ad_g:T_eG^c\rightarrow T_eG^c$ commutes with the complex structure $J$ on $G^c$. This implies that $M:=J(T_eG)$ is $ad_G$-invariant, so we can apply the above theory using the splitting $\mathfrak{g}\otimes\C=\mathfrak{g}\oplus i\mathfrak{g}$. According to Proposition \ref{p:homogeneous}, there is an isomorphism 
\begin{equation}\label{Gc.iso.eq}
 G^c\times_{ad_G}(JT_eG)\rightarrow T(G^c/G),\ \ [g,JX]\rightarrow \pi_*(L_g)_*(JX)=\pi_*J(L_g)_*X.
\end{equation}

It follows that $G^c/G$ has a canonical connection, whose geodesics are the projection of curves $g_t$ in $G^c$ satisfying, for some $X\in T_eG$,
\begin{equation}\label{Gc.ODE.eq}
 \frac{d}{dt}g_t=J(L_{g_t})_*X,
\end{equation}
which is an ODE on  $G^c$. If $G^c$ is infinite-dimensional there may be no solutions; however, if a solution does exist for a given initial point, it will exist for any initial point because \eqref{Gc.ODE.eq} is $L$-invariant. In particular, the solution corresponding to the initial point $e\in G^c$ is the 1-parameter subgroup $\expG(tJX)\subset G^c$. 

We can also try to integrate  $X$, obtaining a 1-parameter subgroup $\expG(sX)\subset G$. Assume these subgroups exist. Consider the real 2-dimensional distribution in $TG^c$ generated by $X$ and $JX$. Since the Lie bracket commutes with $J$ we see that $[X,JX]=0$, so the distribution is integrable and our integrations yield a 1-dimensional complex abelian Lie subgroup of $G^c$, spanned by $\expG(sX)$, $\expG(tJX)$. Abstractly, it is the complexification of the Lie group $\expG(sX)$; it is isomorphic to $\Sph^1\times\R$ or to $\C$ depending on 
whether $\expG(sX)$ is compact or not. 

Summarizing, the geodesics in $G^c/G$ are equivalent (through projection and $L$-invariance) to the real 1-parameter subgroups in $G^c$ generated by $JX$, or to the complex 1-parameter subgroups in $G^c$ generated by $X$, for $X\in T_eG$. 

The above applies also to the boundary value problem for geodesics in $G^c/G$: any geodesic $\gamma(t)$, for $t\in [a,b]$, interpolating between  two points in $G^c/G$ lifts to a holomorphic map $\Sigma\rightarrow G^c$, where $\Sigma:=\Sph^1\times [a,b]$ or $\R\times [a,b]$, with prescribed boundary values.
More generally one can study the existence of holomorphic maps $\Sigma\rightarrow G^c$ with given boundary values, where $\Sigma$ is any  Riemann surface with boundary.

\textit{Notation.} From now on we will often relax the distinction between $T_eG$ and $\mathfrak{g}$, and the corresponding distinction between $X$ and $\tilde{X}$.

\begin{definition}
 A function $f:G^c/G\rightarrow\R$ is strictly convex if it is strictly convex when restricted to all geodesics in $G^c/G$. Equivalently, if the lifted function $F:=\pi^*f:G^c\rightarrow\R$ satisfies
 $$JX(JX(F))=\frac{d^2}{dt^2}(F\circ g_t)>0,$$
 for all geodesics $g_t$ in $G^c$ with velocity $JX$, for some $X\in T_eG$.
\end{definition}

\begin{prop}\label{prop:abstract_potential}
 Any strictly convex function $f:G^c/G\rightarrow \R$ lifts to a K\"ahler potential $F:=\pi^*f$ on $G^c$.
\end{prop}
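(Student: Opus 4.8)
The plan is to show that $F := \pi^*f$ is plurisubharmonic and strictly so, since a strictly plurisubharmonic function is precisely a (local) Kähler potential: its complex Hessian $i\partial\bar\partial F$ is a positive $(1,1)$-form, hence a Kähler form on $G^c$. The key is to translate the hypothesis — convexity of $f$ along all geodesics of $G^c/G$ — into positivity of the complex Hessian of $F$, using the description of geodesics furnished by Proposition \ref{p:homogeneous} and its specialization in Section \ref{ss:cpx_lie}.

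First I would recall that, by the $L$-invariance built into the construction, it suffices to check strict plurisubharmonicity of $F$ at the identity $e\in G^c$ (left translation is holomorphic and preserves $F$ up to nothing — more precisely, $F$ is right $G$-invariant, but the complex Hessian transforms covariantly under the holomorphic maps $L_g$, so positivity at one point propagates). At $e$, the holomorphic tangent space is $\mathfrak g\otimes\C = \mathfrak g \oplus i\mathfrak g$, and for each $X\in\mathfrak g=T_eG$ the complex line $\operatorname{Span}_\C\{X\}$ is the tangent space to the one-parameter complex subgroup $\expG(sX+tJX)$. The second key point: along this subgroup, the curve $t\mapsto \expG(tJX)$ is exactly a geodesic $g_t$ in $G^c$ of the type appearing in the definition of strict convexity, and $JX(JX(F))=\frac{d^2}{dt^2}(F\circ g_t)$. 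Likewise $X(X(F))=\frac{d^2}{ds^2}(F\circ\expG(sX))$, and since $F$ is $R_G$-invariant and $\expG(sX)\subset G$, this second derivative vanishes identically: $X(X(F))=0$. The complex Hessian of $F$ evaluated on the real $2$-plane $\operatorname{Span}_\R\{X,JX\}$ is, up to the usual normalization, $\tfrac14\big(X(X(F))+JX(JX(F))\big)$; the first term is zero and the second is $>0$ by strict convexity of $f$ along geodesics. Since $X\in\mathfrak g$ was arbitrary and every complex line through $0$ in $\mathfrak g\otimes\C$ arises this way, $i\partial\bar\partial F>0$ at $e$, hence everywhere.

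The step I expect to require the most care is the identity relating the complex Hessian of $F$ restricted to a complex line to the pair of real second derivatives $X(X(F))$ and $JX(JX(F))$ — in particular, verifying that the mixed terms do not contribute. Here one uses that $[X,JX]=0$ (the Lie bracket commutes with $J$, as noted in Section \ref{ss:cpx_lie}), so the commuting vector fields $X,JX$ span an integrable holomorphic foliation by copies of $\C$ (or $\Sph^1\times\R$); restricted to such a leaf with its induced flat complex structure, $F$ is a function of the flat coordinate $s+it$ and $i\partial\bar\partial F$ is literally $\tfrac14(\partial_s^2+\partial_t^2)F\, \d s\wedge\d t$ times $i/2$. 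The vanishing of $\partial_s^2 F = X(X(F))$ then follows because $F$ is constant along $\expG(sX)\subset G$; the positivity of $\partial_t^2 F$ is the convexity hypothesis. A minor additional point is that ``Kähler potential'' here should be understood locally / on the (possibly non-simply-connected) group $G^c$, so I would phrase the conclusion as: $i\partial\bar\partial F$ is a Kähler form on $G^c$, of which $F$ is a global potential.
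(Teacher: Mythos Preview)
Your approach mirrors the paper's: both reduce positivity of $i\partial\bar\partial F$ to a check on each complex line in $T_gG^c$, assert that any such line is generated by a vector in $\mathfrak g$ (your phrasing) or in $i\mathfrak g$ (the paper's), and then compute on the abelian holomorphic leaf swept out by $\tilde X,J\tilde X$. That leafwise computation is correct and matches the paper's $n=1$ flat-model calculation.

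The shared gap is the reduction step. It is \emph{not} true that every complex line in $\mathfrak g\otimes\C$ contains a nonzero real vector once $\dim_\C\geq 2$: in $\C^2=\R^2\oplus i\R^2$ the line $\C\cdot(e_1+ie_2)$ meets $\R^2$ (and $i\R^2$) only at $0$. Equivalently, a Hermitian form on $\mathfrak g\otimes\C$ can be positive on the totally real subspace $\mathfrak g$ without being positive-definite, e.g.\ $\left(\begin{smallmatrix}1&2i\\-2i&1\end{smallmatrix}\right)$. So verifying the Levi form only on directions $\tilde X$ with $X\in\mathfrak g$ is not, at a single point, enough. If one writes a general tangent vector as $u=\tilde X+J\tilde Y$ (with $X,Y\in\mathfrak g$) and computes directly using left-invariant extensions, one finds
\[
2\,\omega_f(u,Ju)=J\tilde X\big(J\tilde X(F)\big)+J\tilde Y\big(J\tilde Y(F)\big)+2\,J\widetilde{[X,Y]}(F);
\]
the first two terms are controlled by strict convexity, but the bracket term is a first-order quantity of no evident sign, and neither your argument nor the paper's addresses it. (A smaller issue: your ``reduce to $e$ by $L$-invariance'' also needs care, since $F$ is only right-$G$-invariant. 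One can salvage it by noting that $L_g^*F=\pi^*(f\circ\ell_g)$ with $\ell_g$ the left $G^c$-action on $G^c/G$, which preserves geodesics and hence strict convexity; but that is not the justification you gave.)
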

\begin{proof}
 Consider the 2-form $\omega_f:=i\partial\bar{\partial}F=\frac{1}{2}dd^cF$ defined on $G^c$. By construction it is of type $(1,1)$. We need to show that it is positive, \textit{i.e.}~that the symmetric tensor $\omega_f(\cdot,J\cdot)$ is positive definite. 
This is a pointwise statement which must be tested on every vector in $T_gG^c=(\mathfrak{g}+i\mathfrak{g})_{|g}$, for all $g\in G^c$. Equivalently, it suffices to prove that $\omega_f$ is positive when restricted to any complex line. Since $i\mathfrak{g}$ is totally real of maximal dimension, it must intersect the line so we may assume our line is generated by a vector $X$ in $i\mathfrak{g}$. For our computation it is then sufficient to consider the restriction of $F$ to the submanifold of $G^c$ obtained by integrating the vector fields $X,JX$. We now see  our problem corresponds to the $n=1$ case of the following fact: given $f:\R^n\rightarrow \R$ and $F:=\pi^*f:\R^{2n}\rightarrow\R$, if $f$ is strictly convex then $i\partial\bar{\partial}F$ is positive. Indeed, it is simple to compute that 
\begin{gather*}
i\partial\bar{\partial}F=i\frac{\partial^2 F}{\partial z_i \partial \overline{z_j}}dz^i\wedge d\overline{z^j} = 2\sum_{i,j}\frac{\partial^2 f}{\partial x_i \partial x_j}dx^i\wedge dy^j,\\
i\partial\bar{\partial}F((X,Y),(-Y,X)) = 2\frac{\partial^2 f}{\partial x_i \partial x_j}x^i x^j+2\frac{\partial^2 f}{\partial x_i \partial x_j}y^i y^j.
\end{gather*}
Trivially, $\omega_f$ is closed so the result follows.
\end{proof}

Proposition \ref{prop:abstract_potential} shows that any strictly convex function $f$ on $G^c/G$ defines a K\"ahler structure $\omega_f$ on $G^c$. As $G$ acts holomorphically on $G^c$ and preserves the K\"ahler potential,  it preserves $\omega_f$.  Let $\text{Crit}(f)=\{p\in G^c/G:df_{|p}=0\}$ be the set of critical points of $f$.

\begin{prop}\label{prop:abstract_moment_map}
 The action of $G$ on $G^c$, endowed with a K\"ahler structure $\omega_f$, is Hamiltonian with moment map
 \begin{equation*}
 \mu_f:=-\frac{1}{2}d^cF=\frac{1}{2}dF\circ J:G^c\rightarrow\mathfrak{g}^*.
 \end{equation*}
 In particular, $\mu_f^{-1}(0)=\pi^{-1}\text{\emph{Crit}}(f)$ . Since $f$ is strictly convex, $\text{\emph{Crit}}(f)$ is either empty or a unique point, so $\mu_f^{-1}(0)$ is either empty or a unique $G$-orbit in $G^c$.
\end{prop}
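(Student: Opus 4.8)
The plan is to check the two defining properties of an (equivariant) moment map and then to analyse the zero set. Write $X_\xi$ for the fundamental vector field on $G^c$ generated by $\xi\in\mathfrak{g}$ under the $G$-action; in the notation of Section~\ref{ss:cpx_lie} this is the $L$-invariant vector field $\tilde\xi$, and the values at $p$ of the fields $X_\xi$, $\xi\in\mathfrak{g}$, span the ``vertical'' subspace $(L_p)_*T_eG$ of the principal bundle $\pi:G^c\to G^c/G$. Two facts will drive everything: first, $F=\pi^*f$ is $G$-invariant, its level sets being exactly the fibres of $\pi$, so $\mathcal{L}_{X_\xi}F=0$; second, $G$ acts on $G^c$ by biholomorphisms, so each $X_\xi$ is real holomorphic, $\mathcal{L}_{X_\xi}J=0$, and hence $\mathcal{L}_{X_\xi}$ commutes with $d^c$.

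First I would verify the Hamiltonian identity $d\langle\mu_f,\xi\rangle=X_\xi\lrcorner\,\omega_f$. Applying Cartan's formula to the $1$-form $d^cF$ and using $\omega_f=\tfrac12 d(d^cF)$,
$$X_\xi\lrcorner\,\omega_f=\tfrac12\big(\mathcal{L}_{X_\xi}(d^cF)-d(X_\xi\lrcorner\,d^cF)\big)=\tfrac12\big(d^c\mathcal{L}_{X_\xi}F-d((d^cF)(X_\xi))\big)=-\tfrac12\,d\big((d^cF)(X_\xi)\big),$$
since $\mathcal{L}_{X_\xi}F=0$; the right-hand side is $d\langle\mu_f,\xi\rangle$ because $\langle\mu_f,\xi\rangle=-\tfrac12(d^cF)(X_\xi)$ is exactly the pairing of $\mu_f=-\tfrac12 d^cF$ with $\xi$. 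Equivariance is then essentially automatic, since $\mu_f$ is built naturally from the $G$-invariant data $(F,J)$: $d^cF$ is a $G$-invariant $1$-form (as $g^*d^cF=d^c(g^*F)=d^cF$ for $g\in G$, using holomorphicity of $g$ and invariance of $F$), and combining this with the transformation law of the fundamental vector fields under the $G$-action shows that $\mu_f$ intertwines the $G$-action on $G^c$ with the coadjoint action on $\mathfrak{g}^*$.

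Next I would identify the zero set. By construction $\mu_f(p)=0$ precisely when $d^cF_p$ annihilates every $X_\xi$ at $p$, i.e.\ when $d^cF_p$ vanishes on the vertical subspace $(L_p)_*T_eG$. Since $d^cF=-dF\circ J$ and $J$ interchanges the vertical subspace $(L_p)_*T_eG$ with the ``horizontal'' subspace $(L_p)_*(JT_eG)$ --- recall $\mathfrak m=J\mathfrak h$ in Section~\ref{ss:cpx_lie} --- this is equivalent to $dF_p$ vanishing on the horizontal subspace. But $\pi_*$ restricts to an isomorphism from the horizontal subspace onto $T_{\pi(p)}(G^c/G)$ by \eqref{Gc.iso.eq} (Proposition~\ref{p:homogeneous}), and $dF_p=\pi_p^*(df_{\pi(p)})$; hence $\mu_f(p)=0$ iff $df_{\pi(p)}=0$, so $\{\mu_f=0\}=\pi^{-1}(\mathrm{Crit}(f))$. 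Since each fibre of $\pi$ is a single $G$-orbit, the zero set is a union of $G$-orbits, one over each critical point of $f$. Finally, a strictly convex function has at most one critical point: if $x_0\ne x_1$ were both critical, restricting $f$ to a geodesic joining them would give a strictly convex function on an interval with vanishing derivative at both endpoints, which is impossible. Hence $\{\mu_f=0\}$ is empty or a single $G$-orbit.

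The computations above are routine; the step that really needs care is the last one. Asserting that $\mathrm{Crit}(f)$ is at most a single point tacitly invokes the existence of a geodesic in $G^c/G$ between any two of its points. This is unproblematic in the finite-dimensional model case, where $G^c/G$ is geodesically complete (for $G$ compact it is a symmetric space of non-compact type); but in the infinite-dimensional setting it is precisely the geodesic existence problem discussed in Section~\ref{s:geodesic_eq}, so the conclusion is to be read in the same formal spirit as the rest of this section. The same caveat attaches to the meaning of $\mathfrak{g}^*$ and of the moment-map formalism in infinite dimensions; note, however, that non-degeneracy of $\omega_f$ is not at issue, being guaranteed by Proposition~\ref{prop:abstract_potential}.
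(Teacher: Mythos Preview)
Your proof is correct and follows essentially the same route as the paper: both use Cartan's formula on $d^cF$ together with $G$-invariance of $F$ and $J$ to verify the Hamiltonian identity, and both identify the zero set by translating vanishing of $d^cF$ on the vertical space into vanishing of $dF$ (hence $df$) via the splitting $T_eG^c=\mathfrak g\oplus J\mathfrak g$. You are somewhat more thorough than the paper --- you explicitly check equivariance and supply the geodesic argument for uniqueness of the critical point (with the appropriate caveat on geodesic existence) --- whereas the paper simply asserts these, but the core argument is the same.
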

\begin{proof}
 For any $X\in \mathfrak{g}$, duality with $\mathfrak{g}^*$ defines a function
 \begin{equation*}
  \mu_f\cdot X:=-\frac{1}{2}i_Xd^cF: G^c\rightarrow\R.
 \end{equation*}
We need to show that $X$ is the Hamiltonian vector field associated to $\mu_f$, \textit{i.e.} $d(-i_Xd^cF)=2i_X\omega_f$. 
Using Cartan's formula, we see
\begin{equation*}
 d(-i_Xd^cF)=-\mathcal{L}_X d^cF+i_Xdd^cF=\mathcal{L}_X(dF\circ J)+2i_X\omega_f.
\end{equation*}
The first term vanishes because both $F$ and $J$ are preserved by the action of $G$.

To conclude, notice that $g\in G^c$ lies in $\mu_f^{-1}(0)$ if and only if $(dF\circ J)_{|g}(X)=dF_{|g}(JX)=0$ for all $X\in\mathfrak{g}$. Since $F$ is $G$-invariant this is equivalent to $dF_{|g}=0$, thus $df_{|\pi(g)}=0$.
\end{proof}

\subsection{Existence of critical points via a stability condition}\label{ss:stability}
The interpretation of critical points of $f:G^c/G\rightarrow\R$ as zeros of a moment map is geometrically interesting but in itself does not bring us closer to understanding whether, in any specific situation, such points exist. If however we can embed $G^c$ with its given structures into a larger K\"ahler manifold $M$, we can sometimes apply the following general framework for studying this existence problem.

Let $(M, \overline{g},J,\overline{\omega})$ be a K\"ahler manifold endowed with a $G$-action preserving $\overline{g}$ and $J$. To simplify matters we assume this action is free. Let us assume that $G^c$ also acts on $M$, preserving $J$: this gives a family of $G^c$-orbits in $M$. As a first step, we are interested in finding situations where each   orbit $\mathcal{O}$ admits a canonical function $F$ such that $\overline{\omega}_{|\mathcal{O}}=i\partial\bar{\partial}F$ as in Proposition \ref{prop:abstract_potential}. An example of this is as follows.

Assume $M$ is polarized, \textit{i.e.}~there is a holomorphic line bundle $L$ over $M$ with a Hermitian metric such that the Chern connection has curvature $\Theta=i\overline{\omega}$. Recall the standard formula for $\Theta$ in terms of a local holomorphic section: $\Theta=\bar{\partial}\partial \log |\sigma|^2$. It follows that $\bar{\omega}=i\partial\bar\partial \log|\sigma|^2$. If the action of $G$ is Hamiltonian, the moment map yields a canonical lift of the infinitesimal action of $G$ to the total space of $L$, cf.~\cite[Section 6.5]{DonKron} for details. Let us assume this integrates to an action of $G^c$. Any point in $L$ then generates a $G^c$-orbit $\tilde{\mathcal{O}}$ which projects to a $G^c$-orbit $\mathcal{O}$ in $M$. Let us think of $\tilde{\mathcal{O}}$ as the graph of a non-vanishing holomorphic section $\sigma:\mathcal{O}\subset M\rightarrow L$. Consider the function $F:=\log |\sigma|^2:\mathcal{O}\rightarrow\R$. By construction $G$ preserves the Hermitian metric so 
$F=\pi^*f$, for 
some $f:G^c/G\simeq\mathcal{O}/G\rightarrow\R$. It turns out that $f$ is strictly convex in our sense; the formula for $\overline{\omega}$ corresponds exactly to the situation of Proposition \ref{prop:abstract_potential}.

In the above situation we say an orbit $\mathcal{O}$ is \textit{stable} if $f$ is proper, so it admits a critical point. Let $M^s$ denote the set of points in $M$ whose corresponding $G^c$-orbits are stable. According to Proposition \ref{prop:abstract_moment_map} there is a 1:1 mapping
$$M^s/G^c\simeq \mu_f^{-1}(0)/G.$$
The key point is that, in specific situations, stability of a given orbit can sometimes be tested using purely holomorphic information on $M$ and the $G^c$-action. We thus get a correspondence between holomorphic and symplectic data on $M$, addressing the existence of critical points of the functions $f$. 

Summarizing: if we can embed our given complexified group $G^c$, endowed with the structure $\omega_f$ defined by a strictly convex function $f:G^c/G\rightarrow \R$, into some K\"ahler $M$ with a $G^c$-action so that it coincides with one of these orbits, then we can hope to test the existence of critical points of $f$ by verifying some type of ``stability condition'' on that orbit.

\subsection{Extension to infinitesimal complexifications}\label{ss:inf_complexification}
Finite-dimensional examples of stability and its relation to existence problems are a classical topic of Algebraic Geometry, related to Geometric Invariant Theory and the Kempf--Ness theorem.

Gauge theory provided the first context in which this abstract framework arose in an infinite-dimensional setting: this is related to the Hitchin--Kobayashi conjecture concerning the existence of Hermitian--Einstein connections on a given Hermitian vector bundle $E$ over a K\"ahler manifold, cf.~\cite{DonKron} for details. In this case $G$ is the group of unitary transformations of $E$, and its complexification $G^c$ is the group of automorphisms of $E$. 

In general when $G$ is infinite-dimensional there does not exist a complexification $G^c$, cf.~\cite{Lempert}. The above theory can thus not be applied. For this reason Donaldson \cite{SKD} introduced a slightly weaker notion of ``infinitesimal complexification'' of  $G$. In this framework we can recover the above results, as follows.

\begin{definition}
 Let $Z$ be a smooth manifold. Assume there exists a vector space $V$ and an injection
 $$V\rightarrow\Lambda^0(TZ),\ \ X\mapsto\tilde{X}$$
 such that the vector fields $\tilde{X}$ define a parallelization of $TZ$, thus $TZ\simeq Z\times V$. Assume further that the space of vector fields $\tilde{X}$ is closed under the Lie bracket on $Z$. We then get an induced Lie bracket on $V$ such that $\widetilde{[X,Y]}=[\tilde{X},\tilde{Y}]$. 
 
The above data defines an \textit{infinitesimal Lie group} $Z$ with Lie algebra $V$.
\end{definition}

\begin{definition}\label{def:inf_complexification}
 Let $(Z,J)$ be a complex manifold. Assume there exists a Lie group $G$ acting freely on the right on $Z$ and preserving $J$. Given $X\in\mathfrak{g}$, let $\tilde{X}$ be the corresponding fundamental vector field: specifically, if $X$ is the infinitesimal deformation of the 1-parameter subgroup $g_t$ then $$\tilde{X}_{|\zeta}:=\frac{d}{dt}(\zeta\cdot g_t)_{|t=0}.$$ This defines an injection $\mathfrak{g}\rightarrow\Lambda^0(TZ)$ preserving the corresponding Lie brackets.
 
 Consider the extended map
 \begin{equation}\label{eq:inf_complexification}
 \mathfrak{g}\otimes\C\rightarrow \Lambda^0(TZ),\ \ X+iY\mapsto \tilde{X}+J\tilde{Y}.
 \end{equation}
 Assume \eqref{eq:inf_complexification} is injective and provides a parallelization of $TZ$. As $G$ preserves $J$, we have $\mathcal{L}_{\tilde{X}}J=0$, \textit{i.e.}~$[\tilde{X},JY]=J[\tilde{X},Y]$ for all $Y\in\Lambda^0(TZ)$. The vanishing of the Nijenhuis tensor implies that also $\mathcal{L}_{J\tilde{X}}J=0$. 
 It follows that the image of the map  \eqref{eq:inf_complexification} is closed under the Lie bracket on $Z$ and that this Lie bracket is $J$-linear, \textit{i.e.}~the image is a complex Lie algebra. Thus \eqref{eq:inf_complexification} defines a complex Lie algebra isomorphism onto its image.
 
 We then say that $Z$ is an \textit{infinitesimal complexification} of $G$.
\end{definition}
Given $Z$ and $G$ as above, we can view $\pi:Z\rightarrow Z/G$ as a principal $G$-bundle. 
The fundamental vector fields $\tilde{X}$ define the ``vertical space'', \textit{i.e.}~the kernel of $\pi_*$. The space of fields $J\tilde{X}$ defines a complementary distribution, which is $G$-invariant because $G$ preserves $J$. In other words, the splitting
$$TZ\simeq \mathfrak{g}\oplus i\mathfrak{g}$$ 
defines a connection on $Z$, thus on all associated bundles. 

A priori there is no adjoint action of $G$ on $i\mathfrak{g}$, because there is no actual group $G^c$ inducing it. We can however define an \textit{ad hoc} action using the adjoint action of $G$ on $\mathfrak{g}$, as follows:
\begin{equation}
 ad_G:G\rightarrow GL(i\mathfrak{g}),\ \ ad_g(iX):=i\,ad_g(X).
\end{equation}
This allows us to define the associated bundle $Z\times_{ad_G}(i\mathfrak{g})$.
\begin{prop}\label{p:homogeneous_bis}
 There is an isomorphism
\begin{equation*}
 Z\times_{ad_G}(i\mathfrak{g})\rightarrow T(Z/G),\ \ [\zeta,X]\mapsto \pi_*(J\tilde{X}_{|\zeta}).
\end{equation*}
\end{prop}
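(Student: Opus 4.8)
The plan is to repeat, essentially word for word, the argument behind Proposition \ref{p:homogeneous} and the isomorphism \eqref{Gc.iso.eq}, simply replacing the (possibly non-existent) complexified group $G^c$ by the infinitesimal data of Definition \ref{def:inf_complexification}. The key observation is that, even in the absence of $G^c$, every ingredient needed to invoke the general theory of principal bundles is present: $\pi\colon Z\to Z/G$ is a genuine principal $G$-bundle, the splitting $TZ\simeq\mathfrak{g}\oplus i\mathfrak{g}$ (the fundamental vector fields $\tilde X$ versus the fields $J\tilde X$) is a genuine principal connection, and $Z\times_{ad_G}(i\mathfrak{g})$ is a genuine associated vector bundle. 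Thus the only content of the proposition is that the explicit map displayed in the statement realizes the canonical isomorphism between the adjoint bundle and $T(Z/G)$.

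First I would check that the map is well defined on equivalence classes. For the right action of $G$ the fundamental vector fields transform by $(R_g)_*\tilde X_{|\zeta}=\widetilde{(ad_{g^{-1}}X)}_{|\zeta\cdot g}$, which follows by differentiating the identity $\zeta\cdot\exp(tX)\cdot g=(\zeta\cdot g)\cdot\exp(t\,ad_{g^{-1}}X)$ at $t=0$. Since $G$ preserves $J$, each $R_g$ is holomorphic, hence $(R_g)_*\big(J\tilde X_{|\zeta}\big)=J\,\widetilde{(ad_{g^{-1}}X)}_{|\zeta\cdot g}$; applying $\pi_*$ and using $\pi\circ R_g=\pi$ gives $\pi_*\big(J\tilde X_{|\zeta}\big)=\pi_*\big(J\,\widetilde{(ad_{g^{-1}}X)}_{|\zeta\cdot g}\big)$. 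This is exactly the statement that the representatives $[\zeta,iX]$ and $[\zeta\cdot g,\,i\,ad_{g^{-1}}X]$ of a point of $Z\times_{ad_G}(i\mathfrak{g})$ have the same image, so the map descends; it is manifestly a bundle morphism over the identity of $Z/G$, linear on each fibre.

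Next I would show it is a fibrewise isomorphism, hence a bundle isomorphism. Surjectivity is immediate: $\pi_*\colon T_\zeta Z\to T_{\pi(\zeta)}(Z/G)$ is onto with kernel the vertical space $\{\tilde Y_{|\zeta}:Y\in\mathfrak{g}\}$, while by the parallelization hypothesis of Definition \ref{def:inf_complexification} the space $\{J\tilde X_{|\zeta}:X\in\mathfrak{g}\}$ is a complement to it, so $\pi_*$ already maps the latter onto $T_{\pi(\zeta)}(Z/G)$. For injectivity, if $\pi_*\big(J\tilde X_{|\zeta}\big)=0$ then $J\tilde X_{|\zeta}$ is vertical, say $J\tilde X_{|\zeta}=\tilde Y_{|\zeta}$; reading this through the $\C$-linear isomorphism $\mathfrak{g}\otimes\C\to T_\zeta Z$, $W+iW'\mapsto\tilde W_{|\zeta}+J\widetilde{W'}_{|\zeta}$ furnished by \eqref{eq:inf_complexification}, it becomes $iX=Y$ in $\mathfrak{g}\otimes\C$, which forces $X=0$ since $iX\in i\mathfrak{g}$ and $Y\in\mathfrak{g}$.

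I do not anticipate a genuine obstacle here. The only step with actual content is the well-definedness check, where the $G$-invariance of $J$ and the adjoint transformation law of the fundamental vector fields are both used; everything else is formal manipulation with the definition of an associated bundle and with the parallelization hypothesis of Definition \ref{def:inf_complexification}. As elsewhere in the paper, questions of smoothness in the infinite-dimensional setting are treated formally.
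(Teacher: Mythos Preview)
Your proposal is correct and follows the same approach as the paper. The paper's own proof is extremely terse: it identifies well-definedness as ``the main issue'', reduces it to the transformation law $\widetilde{ad_{g^{-1}}X}_{|\zeta\cdot g}=(\tilde{X}_{|\zeta})\cdot g$ for fundamental vector fields (exactly your computation with $\exp(tX)$), and leaves the fibrewise isomorphism entirely implicit as standard principal-bundle theory; you simply spell out that implicit part using the parallelization hypothesis.
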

\begin{proof}
 The main issue is to check that the map is well-defined, \textit{i.e.}~that the images of $[\zeta\cdot g, ad_{g^{-1}}X]$ and of $[\zeta,X]$ coincide. It suffices to prove that $\widetilde{ad_{g^{-1}}X}_{|\zeta\cdot g}=(\tilde{X}_{|\zeta})\cdot g$, which is a simple computation.
\end{proof}
\begin{remark}
 When $Z=G^c$ is a standard complexification, this construction coincides with the previous one in \eqref{Gc.iso.eq} because $(L_g)_*X$ is the fundamental vector field $\tilde{X}$ of the right action of $G$ on $G^c$.
\end{remark}
As above, it follows that $Z/G$ has a canonical connection whose geodesics are the projection of curves $\zeta_t$ in $Z$ satisfying, for some $X\in T_eG$,
\begin{equation*}
 \frac{d}{dt}\zeta_t=J\tilde{X}_{|\zeta_t}.
\end{equation*}
As in Section \ref{ss:cpx_lie} this is an ODE: solving it corresponds to integrating the vector field $\zeta_t\mapsto J\tilde{X}_{|\zeta_t}$ in $Z$. This problem is $G$-invariant, but here there is no notion of $G^c$-invariance. We can complexify geodesics by combining them with solutions to  $\frac{d}{ds}\zeta_s=\tilde{X}_{|\zeta_s}$, thus obtaining holomorphic curves in $G^c$.

The analogues of Propositions \ref{prop:abstract_potential} and \ref{prop:abstract_moment_map} continue to hold in this context. 

\section{K\"ahler potentials and cscK metrics}\label{s:pottyK}

In Section \ref{ss:inf_complexification} we mentioned that the ideas of Section \ref{s:abstract} can be usefully applied to gauge theory. A second geometric setting in which this abstract framework proves itself useful is the search for constant scalar curvature K\"ahler (cscK) metrics on a complex manifold $(M,J)$ within a given K\"ahler class $[\omega_0]$. In this case the appropriate Lie group $G$ does not admit a formal complexification, so it is necessary to work with the infinitesimal complexifications described in Section \ref{ss:inf_complexification}. The goal of this section is to provide an overview of this problem so as to emphasize analogies with our main topics: totally real submanifolds, the existence of geodesics and the search for critical points of the $J$-volume. 

Let $M$ be a compact manifold. The space $\Diff(M)$ can be given the structure of an infinite-dimensional Lie group with Lie algebra $\mathcal{X}:=\Lambda^0(TM)$. As for any Lie group,   $T_\zeta\Diff(M)$ is spanned by $(L_\zeta)_*X$ for $X\in\mathcal{X}$, or, by  $(R_\zeta)_*Y=Y\circ\zeta$ for $Y\in\mathcal{X}$. Thus, tangent vectors at $\zeta$ are sections of the pullback bundle $\zeta^*{TM}$. If $(L_\zeta)_*X=(R_\zeta)_*Y$, then $X$ and $Y$ are related by the $ad$-action, which coincides with the standard ``pushforward'' action: $ad_\zeta(X)=d\zeta_{|\zeta^{-1}}(X_{|\zeta^{-1}})$.

If $M$ has a complex structure $J$, $\Diff(M)$ receives an induced complex structure $J(X_{|\zeta}):=(JX)_{|\zeta}$. The smooth structure on $\Diff(M)$ is defined so that the corresponding Lie bracket can be calculated in terms of the Lie bracket on $\mathcal{X}$: it follows that the Nijenhuis tensor of $J$ vanishes on $\Diff(M)$ if this is true on $M$. The right action $R$ preserves $J$ but $L$ does not, so $\Diff(M)$ is a complex manifold but not a complex Lie group.

Now assume $(M,J,\omega_0)$ is K\"ahler. Consider the space $\mathcal{H}$ of K\"ahler structures in the cohomology class defined by $\omega_0$. This is a convex subspace of the space of 2-forms on $M$. According to the $\partial\bar{\partial}$-lemma, any such $\omega$ can be written as
$$\omega=\omega_f:=\omega_0+i\partial\bar{\partial}f,$$
for some $f\in C^\infty(M)$. The potential $f$ is well-defined only up to a constant; we can choose a canonical representative for $f$ using a  normalization functional $I:C^\infty(M)\rightarrow\R$ introduced by Bando and Mabuchi, with the following properties.
\begin{itemize}
 \item There is a 1:1 identification 
 \begin{equation} \label{eq:normalization}
  I^{-1}(0)\simeq \mathcal{H}, \ \ f\mapsto\omega_f.
 \end{equation}
\item The tangent space at $f\in I^{-1}(0)$ is 
$$T_fI^{-1}(0)=\{h\in C^\infty(M): \int_M h\vol_{\omega_f}=0\}.$$
We will alternatively denote this space $C^\infty_{\omega_f}(M)$.\footnote{We refer to \cite{SKD} for details.}
\end{itemize}
 Using this identification, for any $f\in\mathcal{H}$ we define 
$$\mathcal{Q}_f:=\{\zeta\in\Diff(M):\zeta^*\omega_f=\omega_0\}.$$
Let $\mathcal{Q}\subset\Diff(M)$ denote the union of all such $\mathcal{Q}_f$. Consider the right action of the subgroup of Hamiltonian diffeomorphisms $G:=\Ham(M,\omega_0)$ on $\Diff(M)$. Each $\mathcal{Q}_f$ is an orbit of this action so $\pi:\mathcal{Q}\rightarrow \mathcal{H}$ is a principal $G$-bundle.

For any symplectic structure $\omega$ we let $\Ham(\mathcal{X},\omega)$  denote the Lie algebra of $\Ham(M,\omega)$. Its elements are the vector fields $X_h^\omega$ satisfying the equation $dh=\omega(X_h^\omega,\cdot)$, for some function $h:M\rightarrow \R$. In the K\"ahler setting it follows that $X^\omega_h=-J\nabla^\omega h$, where $\nabla^\omega$ is the gradient operator defined by the induced metric $g:=\omega(\cdot,J\cdot)$. We can choose $h$ uniquely by ensuring it belongs to $C^\infty_\omega(M)$. We can then identify the Lie algebra $\Ham(M,\omega)$ with the Lie algebra $C^\infty_\omega(M)$, endowed with the natural Poisson bracket on functions (up to sign).

\begin{lem}\label{lem:Ham_adjoint}
The adjoint action of $\Diff(M)$ on $\mathcal{X}$ satisfies
 $$ad_\zeta(X^{\omega_0}_h)=X_{h\circ\zeta^{-1}}^{\omega_f},$$
 for all $\zeta\in\mathcal{Q}_f$. Furthermore, if $h\in C^\infty_{\omega_0}(M)$ then $h\circ\zeta^{-1}\in C^\infty_{\omega_f}(M)$.
\end{lem}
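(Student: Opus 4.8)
The plan is to unwind the definitions and reduce the statement to the naturality of pullback, using the single hypothesis $\zeta^*\omega_f=\omega_0$. Recall from the discussion preceding the lemma that the adjoint action is just the pushforward, $ad_\zeta(X)=d\zeta_{|\zeta^{-1}}(X_{|\zeta^{-1}})=\zeta_*X$, and that $X^{\omega_0}_h$ is characterized by $dh=\omega_0(X^{\omega_0}_h,\cdot)$. Hence the first claim amounts to the identity
$$d(h\circ\zeta^{-1})=\omega_f(\zeta_*X^{\omega_0}_h,\cdot),$$
i.e.\ to the assertion that $\zeta_*X^{\omega_0}_h$ is the Hamiltonian vector field of $h\circ\zeta^{-1}$ relative to $\omega_f$. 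This is the familiar fact that a symplectomorphism intertwines Hamiltonian data, the only twist being that $\zeta$ is not a symplectomorphism of one form but an intertwiner between $(M,\omega_0)$ and $(M,\omega_f)$.

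First I would verify this pointwise. Fix $q\in M$, set $p:=\zeta^{-1}(q)$, and let $W_q\in T_qM$ be arbitrary; write $W_q=d\zeta_p U_p$ with $U_p:=d\zeta^{-1}_q W_q$. Evaluating the right-hand side and using $(\omega_f)_q(d\zeta_p\,\cdot,d\zeta_p\,\cdot)=(\zeta^*\omega_f)_p=(\omega_0)_p$, I obtain
$$\omega_f\big((\zeta_*X^{\omega_0}_h)_q,W_q\big)=(\omega_0)_p\big((X^{\omega_0}_h)_p,U_p\big)=(dh)_p(U_p)=(dh)_{\zeta^{-1}(q)}\big(d\zeta^{-1}_q W_q\big)=d(h\circ\zeta^{-1})_q(W_q),$$
which is exactly the desired identity. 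Since the normalized Hamiltonian generator in $C^\infty_{\omega_f}(M)$ is unique, this also pins down the normalized vector field, once the normalization is shown to be preserved.

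For the second claim I would argue by change of variables. The condition $\zeta^*\omega_f=\omega_0$ passes to top powers, and since each volume form is $\tfrac1{n!}$ times the $n$-th power of the corresponding symplectic form we get $\zeta^*\vol_{\omega_f}=\vol_{\omega_0}$; in particular $\zeta$ is orientation-preserving. Therefore
$$\int_M (h\circ\zeta^{-1})\,\vol_{\omega_f}=\int_M \zeta^*\big((h\circ\zeta^{-1})\,\vol_{\omega_f}\big)=\int_M h\,\zeta^*\vol_{\omega_f}=\int_M h\,\vol_{\omega_0}=0,$$
so $h\circ\zeta^{-1}\in C^\infty_{\omega_f}(M)$ whenever $h\in C^\infty_{\omega_0}(M)$, as required; combined with the previous paragraph this gives $ad_\zeta(X^{\omega_0}_h)=X^{\omega_f}_{h\circ\zeta^{-1}}$ at the level of normalized generators.

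I do not expect a genuine obstacle here: the content is purely formal once the conventions for $ad_\zeta$, for the Hamiltonian vector field $X^\omega_h$, and for the Bando--Mabuchi normalization functional $I$ are fixed. The only points deserving mild care are bookkeeping the pushforward/pullback directions in the pointwise computation, and confirming that the sign convention for $X^\omega_h$ — hence for the Poisson bracket used to identify $\Ham(M,\omega)$ with $C^\infty_\omega(M)$ — is the same on both sides of the claimed equality.
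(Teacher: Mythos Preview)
Your proof is correct and follows exactly the approach sketched in the paper: you verify that the two vector fields agree by contracting with $\omega_f$ and reducing to the defining relation for $X^{\omega_0}_h$ via $\zeta^*\omega_f=\omega_0$, and you handle the normalization by change of variables using $\zeta^*\vol_{\omega_f}=\vol_{\omega_0}$. The paper's own proof merely says ``one can check that the two vector fields coincide when contracted with $\omega_f$'' and calls the normalization ``a straightforward computation'', so you have simply spelled out those details.
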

\begin{proof}
 One can check that the two vector fields coincide when contracted with $\omega_f$. The normalization property of $h\circ\zeta^{-1}$ is a straightforward computation.
\end{proof}

The vertical space of the fibration $\pi$ at a point $\zeta\in\mathcal{Q}_f$ is the subspace $(L_\zeta)_*\Ham(\mathcal{X},\omega_0)$ of $T_\zeta\mathcal{Q}$. It follows from Lemma \ref{lem:Ham_adjoint} that $(L_\zeta)_*\Ham(\mathcal{X},\omega_0)=(R_\zeta)_*\Ham(\mathcal{X},\omega_f)=\Ham(\mathcal{X},\omega_f)_{|\zeta}$.
One can check that there is a splitting
$$T_\zeta\mathcal{Q}=\Ham(\mathcal{X},\omega_f)_{|\zeta}\oplus J\Ham(\mathcal{X},\omega_f)_{|\zeta}.$$
Hence, $\mathcal{Q}$ is an infinitesimal complexification of $\Ham(M,\omega_0)$ in the sense of Definition \ref{def:inf_complexification}. Equivalently, $T\mathcal{Q}$ is parallelized by the map
$$C^\infty_{\omega_0}(M)\otimes\C\rightarrow \Lambda^0(T\mathcal{Q}), \ \ h+ik\mapsto \Big(\zeta\mapsto \big(X^{\omega_f}_{h\circ\zeta^{-1}}+JX^{\omega_f}_{k\circ\zeta^{-1}}\big)_{|\zeta}\Big).$$

We thus learn that $\mathcal{H}=\mathcal{Q}/G$ has a canonical connection. As above, the geodesics are the curves $f_t\subset\mathcal{H}$ obtained by projection of solutions $\zeta_t\subset\mathcal{Q}$ to 
\begin{equation}\label{eq:Q_geodesics}
\frac{d}{dt}{\zeta_t}=J\left(X^{\omega_{f_t}}_{h\circ{\zeta_t^{-1}}}\right)_{|\zeta_t},
\end{equation}
where $h$ is a time-independent function in $C^\infty_{\omega_0}(M)$ and $\zeta_t\in \mathcal{Q}_{f_t}$. As mentioned in Section \ref{ss:inf_complexification} we can also integrate $\frac{d}{ds}\zeta_s=\tilde{X}_{|\zeta_s}:=(X^{\omega_f}_{h\circ\zeta_s^{-1}})_{|\zeta_s}$, thus obtaining holomorphic curves in $\mathcal{Q}$: these are the \textit{complexified Hamiltonian flows} in \cite{SKD}.

According to our identifications, the right-hand side of (\ref{eq:Q_geodesics}) projects to the tangent vector $h\circ\zeta_t^{-1}\in T_{f_t}\mathcal{H}$, so the projected equation is 
\begin{equation}\label{eq:H_geodesics}
 \frac{d}{dt}f_t=h\circ\zeta_t^{-1},
\end{equation}
for some $t$-independent $h$. We can incorporate this condition into the equation by noticing that $h=\frac{d}{dt}f_t\circ\zeta_t=\dot{f}_t\circ\zeta_t$ (notice the change in notation). The geodesic equation on $\mathcal{H}$ is thus
$$\dot{h}=\ddot{f}_{t|\zeta_t}+d\dot{f}_{t|\zeta_t}(\dot{\zeta}_t)=0.$$
We can express the right-hand side of (\ref{eq:Q_geodesics}) as the gradient $\nabla^{\omega_{f_t}}(\dot{f}_t)$. We thus arrive at the final expression for geodesics on $\mathcal{H}$:
\begin{equation}\label{eq:H_geodesicsbis}
 \ddot{f}_{t|\zeta_t}+|\nabla^t\dot{f}_{t|\zeta_t}|_t^2=0,
\end{equation}
where we use $\nabla^t$, $|\cdot|_t$ to indicate   we are using the metric induced by $\omega_{f_t}$. 

\begin{remark}\label{eq:geodesic_comparison}
It is useful to compare (\ref{eq:H_geodesics}) and (\ref{eq:H_geodesicsbis}): the former is first order, expressing the fact that $\dot{f}_t$ coincides with a parallel vector field; the latter is second order and uses  that $C^\infty(M)$ is a vector space, thus has a natural connection. In other words,  (\ref{eq:H_geodesicsbis}) describes  geodesics of the canonical connection on $\mathcal{Q}/G$ in terms of the natural connection on $C^\infty(M)$. 
\end{remark}

We now turn to the problem of finding  cscK metrics in $\mathcal{H}$. It turns out that there is a functional $f:\mathcal{H}\rightarrow\R$, due to Mabuchi, with the following properties:
\begin{itemize}
\item $f$ is convex with respect to the geodesics defined above;\vspace{-4pt}
\item the critical points of $f$ are precisely the potentials of cscK metrics in $\mathcal{H}$.
\end{itemize}
Now consider the space $\mathcal{J}$ of integrable complex structures on $M$ which are compatible with $\omega_0$. Let $G:=\Ham(M,\omega_0)$ act on $\mathcal{J}$ by pullback. It can be shown that $\mathcal{J}$ has a canonical K\"ahler structure which is preserved by the action of $G$. Furthermore, it is possible to embed $\mathcal{Q}$, together with the K\"ahler structure defined by $f$ according to Proposition \ref{prop:abstract_potential}, into $\mathcal{J}$: this is described \textit{e.g.}~in \cite[Chapter 9]{Gau2}, so we do not review it here. As in Section \ref{ss:cpx_lie}, this embedding provides strong geometric motivation for the Yau--Tian--Donaldson conjecture concerning stability conditions related to the existence of a cscK metric in $\mathcal{H}$.

\section{Complexified diffeomorphism groups}\label{s:grandconclusion}
Let $(M,J)$ be a complex manifold. In Section \ref{s:pottyK} we argued that $\Diff(M)$ inherits a complex structure which is formally integrable. The same construction applies to the space of immersions $\mathcal{I}$ of $L$ into $M$; the space $\mathcal{P}$ is then an open subset of $\mathcal{I}$, so it is formally an infinite-dimensional complex manifold. The action of $\Diff(L)$ by reparametrization preserves the complex structure. We can thus reformulate the material of Section \ref{s:totally_real} in terms of the formalism of Section \ref{s:abstract}. We conclude the following.
\begin{itemize}
 \item $\mathcal{P}$ is an infinitesimal complexification of $\Diff(L)$.\vspace{-4pt}
 \item Proposition \ref{p:homogeneous_bis} applies to $\mathcal{P}$, proving that the tangent space $T(\TRL)$ can be identified with the adjoint bundle associated to $\mathcal{P}$.\vspace{-4pt}
 \item The connection and geodesics defined in Section \ref{s:geodesic_eq} coincide with those defined in Section \ref{ss:inf_complexification}.\vspace{-4pt}
 \item When $M$ is K\"ahler and $\overline{\mbox{Ric}}(M)<0$, Proposition \ref{prop:abstract_potential} shows that the functional $\mbox{Vol}_J$ defines a Kahler structure $\omega_J$ on $\mathcal{P}$. Proposition \ref{prop:abstract_moment_map} also applies, showing that the critical points of $\mbox{Vol}_J$ can be interpreted as the zero set of a moment map. 
\end{itemize}

\begin{remark} A theorem of Bruhat and Whitney \cite{WhitneyBru} shows that any real analytic $L$ can be ``complexified'', \textit{i.e.}~embedded as a totally real submanifold into an appropriate complex manifold $(M,J)$. It thus defines a space $\mathcal{P}$. It follows that the corresponding group $\Diff(L)$ admits an infinitesimal complexification even though it may not admit a genuine complexification \cite{Lempert}.
\end{remark} 

A special case of the above occurs when $M$ is negative K\"ahler--Einstein: in this case, using Proposition \ref{prop:critical_points}, we obtain a reformulation of minimal Lagrangians in terms of the zero set of a moment map. The analogies with the theory of cscK metrics and Hermitian--Einstein connections lead to the following question, which seems worthy of further pursuit.

\ 

\noindent\textbf{Question } Can the existence of minimal Lagrangians in negative KE manifolds be related to a stability condition concerning $(M,\overline{g},J,\bar{\omega})$ and the chosen homotopy class $\TRL$?

\ 

In \cite{LPpersist} we study a different existence question: given a minimal Lagrangian for a negative KE metric on $M$, we prove the existence of minimal Lagrangians with respect to small KE perturbations of that metric.

Concerning uniqueness, one can again formulate several different questions depending on the set of submanifolds one chooses to work with. As seen in Corollary \ref{cor:unique}, a minimal Lagrangian $L_0$ is unique within the set of totally real submanifolds which can be connnected to $L_0$ via a geodesic ray: in some sense this is a global statement, but of course it is of interest only in the presence of a good existence theory for geodesics. In \cite{LPpersist} we discuss the question of local uniqueness, \textit{i.e.} within a neighbourhood of a minimal Lagrangian $L_0$ in $\mathcal{T}$. Existence and uniqueness conjectures in the context of Fukaya categories are formulated in \cite{Joy}.

\bibliographystyle{amsplain}
\bibliography{totally}

\end{document}